\setlist{topsep=0pt,parsep=0pt,partopsep=0pt,itemindent=0pt}
\tikzset{>=stealth}
\patchcmd{\appendices}{\quad}{: }{}{}
\title{Presentations for the Euclidean Picard modular groups}
\author{David Polletta\thanks{Author partially supported by National Science Foundation Grant DMS-1708463.} \protect \\
School of Mathematical and Statistical Sciences \protect\\
Arizona State University \protect \\
Tempe, Arizona, United States \protect \\
email: david.polletta@asu.edu}
\date{\today}
\begin{document}
\newgeometry{top= 0.75in, bottom=.75in, right=.75in, left=.75in}

%%%%%%%%%%%%%%%%%%%%%%%%%%%%%%%%%%%%%%%%%%%%%%%
%%%%%%%%%%%%%%%% NEW COMMANDS %%%%%%%%%%%%%%%%%

%Sets
\newcommand{\A}{\mathbb{A}}
\newcommand{\B}{\mathbb{B}}
\newcommand{\C}{\mathbb{C}}
\newcommand{\F}{\mathbb{F}}
\newcommand{\K}{\mathbb{K}}
\newcommand{\M}{\mathbb{M}}
\newcommand{\N}{\mathbb{N}}
\renewcommand{\P}{\mathbb{P}}
\newcommand{\Q}{\mathbb{Q}}
\newcommand{\R}{\mathbb{R}}
\newcommand{\Z}{\mathbb{Z}}
\newcommand{\card}{\operatorname{card}}

%Group Theory
\newcommand{\U}{\operatorname{U}}
\newcommand{\GL}{\operatorname{GL}}
\newcommand{\PU}{\operatorname{PU}}
\newcommand{\SL}{\operatorname{SL}}
\newcommand{\SU}{\operatorname{SU}}
\newcommand{\PGL}{\operatorname{PGL}}
%Ring Theory
\newcommand{\Char}{\operatorname{Char}}

%Field Theory
\newcommand{\gal}{\operatorname{Gal}}
\newcommand\aut[2]{\operatorname{Aut}_{{}_{#1}} ({#2})}
\newcommand\irr[3]{\operatorname{Irr}_{{}_{#1}}({#2},{#3})}

%Module Theory
\newcommand{\Ann}{\operatorname{Ann}}
\newcommand{\Ch}{\operatorname{Ch}}
\newcommand{\coim}{\operatorname{coim}}
\newcommand{\coker}{\operatorname{coker}}
\newcommand{\Hom}{\operatorname{Hom}}

%Misc
\renewcommand\bar[1]{\overline{#1}}
\newcommand\Id{\operatorname{Id}}
\newcommand\im{\operatorname{im}}
\newcommand\lcm{\operatorname{lcm}}
\newcommand\nr[2]{\operatorname{N}_{{}_{#1}} ({#2})}
\newcommand\tr[2]{\operatorname{Tr}_{{}_{#1}} ({#2})}

%Font Style
\newcommand\ds[1]{{\displaystyle #1}}
\newcommand\mc[1]{\mathcal{#1}}
\newcommand\mf[1]{\mathfrak{#1}}
\newcommand\ms[1]{\mathscr{#1}}
\newcommand\ssty[1]{{\scriptstyle #1}}
\newcommand\sssty[1]{{\scriptscriptstyle #1}}

%Theorem Stuffs
\theoremstyle{plain}
\newtheorem{thm}{Theorem}
\newtheorem{lemma}{Lemma}
\newtheorem{prob}{Problem}
\newtheorem{defn}{Definition}
\newtheorem{prop}{Proposition}
\newtheorem{cor}{Corollary}

\theoremstyle{definition}
\newtheorem{conj}{Conjecture}
\newtheorem*{ex}{Example}
\newtheorem{alg}{Algorithm}
\newtheorem{exc}{Problem}

\theoremstyle{remark}
\newtheorem*{remark}{Remark}
\newtheorem*{note}{Note}
\newtheorem{case}{Case}
%%%%%%%%%%%%%%%%%%%%%%%%%%%%%%%%%%%%%%%%%%%%%%%
%%%%%%%%%%%%%%%%%%%%%%%%%%%%%%%%%%%%%%%%%%%%%%%

\maketitle
\begin{abstract}
 Mark and Paupert devised a general method for obtaining presentations for arithmetic non-cocompact lattices, \(\Gamma\), in isometry groups of negatively curved symmetric spaces.  The method involves a classical theorem of Macbeath applied to a \(\Gamma\)-invariant covering by horoballs of the negatively curved symmetric space upon which \(\Gamma\) acts. In this paper, we will discuss the application of their method to the Picard modular groups, PU\((2,1;\mathcal{O}_{d})\), when \(d=2,11\), and obtain presentations for these groups, which completes the list of presentations for Picard modular groups whose entries lie in Euclidean domains, namely those with \(d=1,2,3,7,11\).
\end{abstract}

\section{Introduction}
\let\thefootnote\relax\footnote{\textit{2010 Mathematics Subject Classification.} Primary; 22E40 Secondary; 32Q45.}There has been a great deal of study devoted to discrete subgroups and lattices in semisimple Lie groups.  In particular, the study of arithmetic lattices, which can be roughly described as lattices obtained by taking matrices with entries lying in the integer ring of some number field.  Some examples of arithmetic lattices are the \textit{Picard modular groups}, PU\((2,1;\mathcal{O}_{d})\), where \(\mathcal{O}_{d}\) represents the ring of integers in the number field \(\Q(i\sqrt{d})\), where \(d\) is a positive, square-free integer.  It is well known that if \(d \equiv 1,2 \mod (4)\), then \(\mathcal{O}_{d} = \Z[i\sqrt{d}]\), and if \(d \equiv 3 \mod (4)\), then \(\mathcal{O}_{d} = \Z[\frac{1+i\sqrt{d}}{2}]\).  It is also known that \(\mathcal{O}_d\) is a Euclidean domain exactly when \(d=1,2,3,7,11\).  Falbel and Parker derived a presentation for PU\((2,1;\mathcal{O}_{3})\) in \cite{3}, and Falbel, Francsics, and Parker obtained a presentation for PU\((2,1;\mathcal{O}_{1})\) in \cite{2}.  These presentations were obtained by constructing explicit fundamental domains for the action of \(\PU(2,1;\mathcal{O}_3)\) and \(\PU(2,1;\mathcal{O}_1)\) respectively on the complex hyperbolic plane and appealing to the Poincar\'e polyhedron theorem.  Complex hyperbolic space does not have totally geodesic real hypersurfaces like in real hyperbolic space, which makes constructing these fundamental domains rather challenging.  For \(d=1,3\), there are additional rotational symmetries making fundamental domain constructions more feasible, whereas these additional symmetries are not present for the remaining values of \(d\).  Mark and Paupert developed a different method to obtain a presentation for PU\((2,1;\mathcal{O}_{7})\), and also applied their method to the cases \(d=1,3\) \cite{8}.  Zhao obtained generators for the Euclidean Picard modular groups in \cite{12}, but did not derive sets of relations for these groups.  The covering argument (Section \ref{horcov}) used in \cite{8} and this paper is closely related to the argument Zhao used to obtain generators for the Euclidean Picard modular groups.  The Picard modular groups are generalizations of another class of arithmetic lattices known as the \textit{Bianchi groups}, \(\PGL(2,\mathcal{O}_d)\), where \(\mathcal{O}_d\) is the ring of integers previously defined.  The Bianchi groups are lattices in the isometry group of real hyperbolic 3-space, and Swan was able to derive presentations in \cite{11} by constructing fundamental domains for the action of these groups and applying a generalization of Macbeath's theorem. 
\newline \indent The main tool for obtaining the presentation for PU\((2,1;\mathcal{O}_{7})\), and the presentations in this paper, comes from a theorem of Macbeath which gives a presentation for a group, \(\Gamma\), acting on a topological space, \(X\), given an open subset \( V \subset X\) such that the \(\Gamma\)-translates of \(V\) cover \(X\).  Obtaining generators and relations for \(\Gamma\) roughly amounts to tracking pairwise and triple intersections of the set \(V\) and its translates.  In our setting, \(X\) is complex hyperbolic 2-space, \(\mathbb{H}^2_{\C}\), and \(\Gamma\) is a cusped arithmetic lattice in Isom(\(\mathbb{H}^2_{\C})\).  The fundamental set, \(V\), used in our application of Macbeath's theorem is a particular type of convex open set in \(\mathbb{H}_{\C}^2\) known as a \textit{horoball} (Definition \ref{hrball}).  These horoballs are anchored at points on the boundary at \(\infty\) of \(X\), \(\partial_{\infty}X\), and we will take \(V\) to be based at the point at \(\infty\) to yield some favorable conditions.  Since the complex hyperbolic plane is a four-dimensional space with a three-dimensional boundary, we are able to construct explicit pictures for phenomena occuring on \(\partial_{\infty}X\).  We will construct such pictures to aid in our argument that the \(\Gamma\)-translates of \(V\) cover \(X\) (Figures 1-4).  It is too difficult to prove directly that our collection of horoball translates cover \(X\).  Instead, we argue that it is sufficient to cover \(\partial V\) in order to get a covering of all of \(X\) (Lemma \ref{horcov}).  We also reduce the difficulty of the covering argument by showing that a covering of \(\partial V\) by open balls for a particular distance function known as the \textit{extended Cygan metric} (Section \ref{sieg}) implies a covering of \(\partial V\) by horoball translates.
\newline \indent These explicit pictures, however, give us no information about pairwise and triple intersections of the \(\Gamma\)-translates of \(V\) in the space \(X\).  In order to track pairwise intersections of translates of \(V\), we use an algebraic property, the notion of \textit{level} (Definition \ref{level}) between two \(E\)-rational points in \(\partial_{\infty}X\), and some desirable properties of basing \(V\) at \(\infty\).  The ability to use levels between rational boundary points comes from the fact that \(\Gamma\) is an \textit{integral lattice}, that is, \(\Gamma\) is contained in \(\PGL(n+1,\mathcal{O}_E)\) for the ring of integers, \(\mathcal{O}_E\), in some number field, \(E\).  Tracking triple intersections of translates of the set \(V \subset X\) is much more difficult in practice.  When searching for pairwise intersections of \(\Gamma\)-translates of \(V\), the notion of level and the consequences of having \(V\) based at \(\infty\), allow us to obtain an explicit description of the generating set of \(\Gamma\).  Unfortunately, we do not have an analagous algebraic criterion to track triple intersections of translates of \(V\) because now we are dealing with three horoballs, two of which do not have the optimal position based at \(\infty\).  Instead, we will show that if a triple intersection occurs, the anchors of the three horoballs of interest cannot be too far apart with respect to the Cygan metric (Lemma \ref{cybounds}).  In practice, this necessary condition may generate some additional relations, but these redundancies will later be eliminated.  
\newline \indent We will derive presentations for  PU\((2,1;\mathcal{O}_{d})\), \(d=2,11\), which completes the list of presentations for Picard modular groups where \(\mathcal{O}_d\) is a Euclidean domain.  In addition to the values of \(d\) where \(\mathcal{O}_d\) is a Euclidean domain, \(\mathcal{O}_{d}\) is a principal ideal domain, but not Euclidean, for the values \(d=19,43,67,163\).  Mark and Paupert's method can be applied to these cases as well, but as \(d\) grows, so does the complexity and computational cost of applying their method, making these presentations seem unattainable for the moment.  In the case \(d=2\), the method produces a presentation with a generating set of 54 matrices and 5,837 relations.  With the aid of MAGMA \cite{7}, we are able to obtain a presentation with 3 generating matrices and 29 relations (Theorem \ref{d=2 pres}).  In the case \(d=11\), we get a presentation with 263 generating matrices and 23,673 relations and MAGMA \cite{7} reduces the generating set to 5 matrices and the size of the relation set to 26 (Theorem \ref{d=11 pres}).  I would like to thank Alice Mark and my advisor, Julien Paupert, for their aid throughout my derivation of these presentations.  I am truly grateful for Dr. Mark and Dr. Paupert teaching me their method and their continuous assistance in its implementation.
\\[1\baselineskip]
Before we explain the method in practice, we summarize some relevant background information about complex hyperbolic space.  More details on complex hyperbolic space can be found in \cite{1}, \cite{4}.

\section{Complex hyperbolic space}
\subsection{Basic definitions}\label{comp}
Let \(\C^{n,1}\) denote the vector space \(\C^{n+1}\) equipped with a Hermitian form \(\langle \cdot, \cdot \rangle\) of signature \((n,1)\).  Denote \(V^{-} = \{Z \in \C^{n+1} : \langle Z,Z \rangle < 0 \}\), \(V^{0} = \{Z \in \C^{n+1} : \langle Z,Z \rangle = 0 \}\), and let \( \pi : \C^{n,1} - \{0\} \longrightarrow \C\P^{n}\) denote projectivization.  We define \(\mathbb{H}^{n}_{\C}\) to be \( \pi(V^{-}) \subset \C\P^{n}\).  We say a matrix \(A \in \GL(n+1,\C)\) is \textit{unitary} for the form \(\langle \cdot , \cdot \rangle\), if \(\langle A(Z),A(W) \rangle = \langle Z,W \rangle\) for all \(Z,W \in \C^{n+1}\). We denote the subgroup of unitary matrices by U\((n,1)\).  \(\mathbb{H}^{n}_{\C}\) comes equipped with a distance function, the Bergman metric, given by

\[ \cosh^{2}(\frac{d(\pi(Z),\pi(W))}{2}) = \frac{ |\langle Z, W \rangle|^{2}}{\langle Z,Z \rangle \langle W,W \rangle}\textrm{ } \textrm{ for }Z,W \in V^{-}\]
Note that the Bergman metric is independent of the choice of lifts for \(Z\) and \(W\).  Unitary matrices clearly preserve the Bergman metric, and it is well known that \(\textrm{Isom}^{0}(\mathbb{H}^{n}_{\C}) = \PU(n,1)\), where \(\textrm{Isom}^{0}(\mathbb{H}^{n}_{\C})\) denotes the identity component of \(\textrm{Isom}(\mathbb{H}^{n}_{\C})\), and \(\PU(n,1) = \U(n,1)/ \U(1)\) denotes the projective unitary group associated to the form \(\langle \cdot,\cdot \rangle\).  We also identify the boundary at infinity, \(\partial_{\infty}\mathbb{H}^{n}_{\C}\), with \( \pi(V^{0})\).  We will be dealing with the case when \(n=2\).  While there are multiple models for complex hyperbolic space, the \textit{Siegel model} of complex hyperbolic space is most useful for our calculations.

\subsection{The Siegel model for \(\mathbb{H}^{2}_{\C}\) and the Heisenberg group}\label{sieg}
The Siegel model of complex hyperbolic 2-space is defined as the projective model explained in Section \ref{comp} associated to the Hermitian form on
\(\C^{3}\) given by
\(\langle Z,W \rangle = W^{*}JZ\), where \({}^*\) denotes conjugate transpose and

\[J = \begin{bmatrix}
       0 & 0 & 1 \\
       0 & 1 & 0 \\
       1 & 0 & 0 \\
      \end{bmatrix} \]
The complex hyperbolic plane can then be parameterized by \(\C \times \R \times \R^{+}\) in the following way:
\[\mathbb{H}^{2}_{\C} = \{ \pi(\psi(z,v,u)) : z \in \C, v \in \R, u \in \R^{+}\}\]
Above, \(\pi\) is the projectivization map of Section \ref{comp} and \(\psi(z,v,u)\) is given by:
\begin{equation}\label{stdlft} 
\psi(z,v,u) = \begin{bmatrix}
                  \frac{-|z|^2 - u + iv}{2} \\
                  z \\
                  1 \\
                 \end{bmatrix}
\end{equation}
When we parameterize \(\mathbb{H}^2_{\C}\) in this way, \(\partial_{\infty}\mathbb{H}^{2}_{\C}\) becomes the one-point compactification:
\[\{ \pi(\psi(z,v,0)) : z \in \C, v \in \R\} \cup \{\infty\}\]
We will denote \(\infty = \pi((1,0,0)^{T})\), and call \((z,v,u) \in \C \times \R \times \R^{+}\) \(\textit{horospherical coordinates}\) of the point \( \pi(\psi(z,v,u)) \in \mathbb{H}^{2}_{\C}\).

\begin{defn}\label{hrball}
For a fixed \(u_{0} \in \R^{+}\), the level set \(H_{u_{0}} = \{ \pi(\psi(z,v,u_{0})) : z \in \C, v \in \R\}\) is called the horosphere at height \(u_{0}\) based at \(\infty\).  We call \(B_{u_{0}} = \{ \pi(\psi(z,v,u)) : z \in \C, v \in \R, u > u_{0}\}\) the open horoball at height \(u_{0}\) based at \(\infty\).
\end{defn}

We can identify the punctured boundary, \(\partial_{\infty}\mathbb{H}^{2}_{\C} - \{\infty\}\), with the \(\textit{Heisenberg group}\), \textbf{H}.  The Heisenberg group has \(\C \times \R\) as its underlying set and obeys the group law given by:
\begin{equation}\label{heis}
 (z_1,v_1)(z_2,v_2) = (z_1+z_2,v_1+v_2+2\textrm{Im}(z_1\cdot\bar{z_2}))
 \end{equation}
In the line above, ``Im'' denotes the imaginary component of a complex number and ``\(\cdot\)'' denotes the ordinary multiplication on \(\C\).  We can identify \(\partial_{\infty}\mathbb{H}^{2}_{\C} - \{\infty\}\) with \textbf{H} since \textbf{H} acts simply-transitively on \(\partial_{\infty}\mathbb{H}^{2}_{\C} - \{\infty\}\).  The action of \((z_1,v_1) \in \textbf{H}\) on an element \((z_2,v_2,0) \in \partial_{\infty}\mathbb{H}^2_{\C} - \{\infty\}\) is given by multiplying the vector \(\psi(z_2,v_2,0)\) on the left by the following matrix in U\((2,1)\): 

\begin{equation}\label{trans}
T_{(z_1,v_1)} = \begin{bmatrix}
                   1 & -\bar{z_1} & \frac{-|z_1|^2 + iv_1}{2} \\
                   0 & 1 & z_1\\
                   0 & 0 & 1 \\
                  \end{bmatrix}
\end{equation}
Matrices of the form above are known as \textit{Heisenberg translation matrices}.  It is an easy exercise to show that Heisenberg translations preserve a distance function on \textbf{H} known as the \(\textit{Cygan metric}\).  The expression for the Cygan metric for \((z_1,v_1),(z_2,v_2) \in \textbf{H}\) is given by:
\begin{equation}\label{cymet} d_{C}((z_1,v_1),(z_2,v_2)) = ||z_1 - z_2|^{4} + |v_1 - v_2 +2\textrm{Im}(z_1\cdot\bar{z_2})|^{2}|^{\frac{1}{4}} = |2\langle \psi(z_1,v_1,0),\psi(z_2,v_2,0) \rangle|^{\frac{1}{2}}
\end{equation}

The Cygan metric is the restriction to the punctured boundary of the complex hyperbolic plane of an incomplete distance function on
\(\bar{\mathbb{H}^{2}_{\C}} - \{\infty\}\) called the \(\textit{extended Cygan metric}\).  The expression for the extended Cygan metric for \((z_1,v_1,u_1),(z_2,v_2,u_2) \in \bar{\mathbb{H}^{2}_{\C}} - \{\infty\}\) is given by:
\begin{equation}\label{excymet}
\begin{split}
d_{XC}((z_1,v_1 , u_1),(z_2,v_2 , u_2)) &= |(|z_1 - z_2|^{2}+|u_1 - u_2|)^{2} + |v_1 - v_2 +2\textrm{Im}(z_1\cdot\bar{z_2})|^{2}|^{\frac{1}{4}}\\
& = |2\langle \psi(z_1,v_1,u_1),\psi(z_2,v_2,u_2) \rangle|^{\frac{1}{2}}
\end{split}
\end{equation}
(Extended) Cygan balls and spheres are defined in the obvious way.  We refer the reader to \cite{5} for more information on the extended Cygan metric.  In addition to Heisenberg translations, matrices of the form
\begin{equation}\label{rot}
A = \begin{bmatrix}
       1&0&0\\
       0&e^{i\theta}&0\\
       0&0&1\\
      \end{bmatrix}
\end{equation}
preserve the Cygan metric as well.  Matrices of this type are called \(\textit{Heisenberg rotation matrices}\).  Indeed, one can easily check that \(A((z,v)) = (e^{i\theta}z,v)\) and the claim follows since \(|e^{i\theta}|=1\).  There is also a connection between extended Cygan spheres and \textit{Ford isometric spheres}, which we define below.
\begin{defn}\label{ford}
 The Ford isometric sphere, \(I_g\), of an isometry \(g \in \textrm{U}(2,1)\) is the set:
 \[I_g = \{w = (z,v,u) \in \mathbb{H}^{2}_{\C} : |\langle \psi(w) , \psi(\infty)\rangle| = |\langle \psi(w), g^{-1}\psi(\infty)\rangle| \}\]
\end{defn}
\noindent By Proposition 4.3 of \cite{5} we have:
\begin{lemma}[\cite{5}]\label{Cyganford}
 Let \(g \in \textrm{U}(2,1)\) satisfy \(g(\infty) \neq \infty\).  Letting \(S\) denote the extended Cygan sphere with center \(g^{-1}(\infty)\) and radius \(\sqrt{2/|g_{3,1}|}\), we have \(S = I_{g}\).
\end{lemma}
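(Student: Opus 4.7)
The plan is to unpack both defining conditions in terms of the standard lift $\psi$ and compare them directly. Write $p = g^{-1}(\infty)$ in horospherical coordinates $(z_0, v_0, 0)$, so that $\psi(p)$ is given by (\ref{stdlft}) with $u = 0$. The identity $S = I_g$ will follow once both conditions are expressed in terms of $|\langle \psi(w), \psi(p)\rangle|$.

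First I would compute $\langle \psi(w), \psi(\infty)\rangle$ where $\psi(\infty) = (1,0,0)^T$: a direct multiplication using the form $\langle Z, W\rangle = W^*JZ$ gives $\langle \psi(w), \psi(\infty)\rangle = 1$ identically for every $w \in \mathbb{H}^2_\C$ (the third component of $\psi(w)$ is $1$, and $J\psi(\infty)^* = (0,0,1)$). So the defining condition for $I_g$ simplifies to $1 = |\langle \psi(w), g^{-1}\psi(\infty)\rangle|$.

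Second, I would relate the lift $g^{-1}\psi(\infty)$ to the standard lift $\psi(p)$. The vector $g^{-1}\psi(\infty)$ is just the first column of $g^{-1}$, whose third entry is $(g^{-1})_{3,1}$. Since $g \in \U(2,1)$ we have $g^*Jg = J$, equivalently $g^{-1} = Jg^*J$, and swapping rows and columns $1$ and $3$ gives $(g^{-1})_{3,1} = \overline{g_{3,1}}$. The hypothesis $g(\infty)\neq\infty$ forces $g_{3,1}\neq 0$, so $g^{-1}\psi(\infty) = \overline{g_{3,1}}\,\psi(p)$ and hence
\[
|\langle \psi(w), g^{-1}\psi(\infty)\rangle| = |g_{3,1}|\cdot|\langle \psi(w), \psi(p)\rangle|.
\]

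Combining these, the Ford sphere condition becomes $|\langle \psi(w), \psi(p)\rangle| = 1/|g_{3,1}|$, i.e.\ $|2\langle \psi(w),\psi(p)\rangle|^{1/2} = \sqrt{2/|g_{3,1}|}$. By (\ref{excymet}) this is exactly the condition $d_{XC}(w, p) = \sqrt{2/|g_{3,1}|}$, so $w \in I_g$ iff $w \in S$. The only mildly delicate step is the identification $(g^{-1})_{3,1} = \overline{g_{3,1}}$, which I expect is the main point of the calculation; everything else is a direct unwinding of the definitions.
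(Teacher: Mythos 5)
Your proof is correct. The paper does not actually prove this lemma --- it is quoted directly as Proposition 4.3 of Kim--Parker \cite{5} --- and your argument is the standard unwinding of the definitions: \(\langle \psi(w),\psi(\infty)\rangle \equiv 1\), the identity \(g^{-1}=Jg^*J\) giving \((g^{-1})_{3,1}=\overline{g_{3,1}}\), and the comparison with (eq.\ \ref{excymet}). The only assertion you leave unjustified is that \(g(\infty)\neq\infty\) forces \(g_{3,1}\neq 0\); this follows in one line from the isotropy of the first column of \(g\), since \(2\mathrm{Re}(\overline{g_{1,1}}g_{3,1})+|g_{2,1}|^2=0\) shows that \(g_{3,1}=0\) would force \(g_{2,1}=0\) and hence \(g(\infty)=\infty\).
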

When we apply Macbeath's theorem later, we will argue that the images under \(\PU(2,1;\mathcal{O}_{d})\) of a particular horoball of certain height based at \(\infty\) cover \(\mathbb{H}^2_{\C}\).  We will also make use of the following fact in our covering argument, which is Lemma 1 of \cite{2}.

\begin{lemma}[\cite{2}]\label{convex}
Extended Cygan balls are affinely convex in horospherical coordinates. 
\end{lemma}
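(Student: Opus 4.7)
The plan is to reduce the claim to the convexity of a single real-valued function, namely the fourth power of the extended Cygan distance from a fixed point, viewed in real horospherical coordinates $(x, y, v, u)$ where $z = x + iy$. Since open and closed sublevel sets of a convex function on a convex domain are convex, this immediately yields affine convexity of both open and closed extended Cygan balls.

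Fix a center $(z_0, v_0, u_0)$ and radius $r > 0$, and, using the explicit formula (\ref{excymet}), define
\[
F(z, v, u) \;=\; d_{XC}\bigl((z,v,u),(z_0,v_0,u_0)\bigr)^{4} \;=\; \bigl(|z-z_0|^{2} + |u-u_0|\bigr)^{2} + \bigl(v - v_0 + 2\,\mathrm{Im}(z\bar{z_0})\bigr)^{2}.
\]
I would show $F$ is convex by analyzing the two summands separately. For the first, set $g(z,u) = |z-z_0|^{2} + |u-u_0|$; this is a sum of a convex quadratic in $(x,y)$ and the convex function $|u-u_0|$, so $g$ is convex and manifestly nonnegative. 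Since $\phi(t) = t^{2}$ is convex and nondecreasing on $[0,\infty)$, the composition $\phi \circ g = g^{2}$ is convex. For the second summand, the expression $v - v_0 + 2\,\mathrm{Im}(z\bar{z_0})$ is $\mathbb{R}$-affine in $(x,y,v,u)$, so its square is convex. Summing two convex functions yields $F$ convex.

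The extended Cygan ball of radius $r$ centered at $(z_0,v_0,u_0)$ is, by construction, the set $\{F < r^{4}\}$ (or $\{F \leq r^{4}\}$ in the closed case), intersected with the convex region $\mathbb{C}\times\mathbb{R}\times\mathbb{R}^{+}$ (or its closure, in the extended boundary case) on which horospherical coordinates parameterize $\mathbb{H}^{2}_{\mathbb{C}}$. Since the intersection of a sublevel set of a convex function with a convex set is convex, the result follows. The only mildly nontrivial step is the convexity of $g^{2}$, which I expect to be the sole place a reader might pause; it can be verified either by the convex nondecreasing composition principle above, or directly by combining $g(\lambda p + (1-\lambda)q) \leq \lambda g(p) + (1-\lambda) g(q)$ with the inequality $2 g(p) g(q) \leq g(p)^{2} + g(q)^{2}$. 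Everything else is an immediate consequence of the elementary calculus of convex functions.
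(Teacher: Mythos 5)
Your argument is correct. Note that the paper itself gives no proof of this lemma --- it is imported verbatim from Falbel--Francsics--Parker \cite{2} --- so there is nothing internal to compare against; your write-up supplies the standard argument that the cited source uses, namely that the fourth power of the extended Cygan distance from a fixed center is a convex function of the real horospherical coordinates $(x,y,v,u)$, being the sum of the square of the nonnegative convex function $|z-z_0|^2+|u-u_0|$ (convex by composition with the nondecreasing convex map $t\mapsto t^2$ on $[0,\infty)$) and the square of the affine function $v-v_0+2\,\mathrm{Im}(z\bar{z_0})$. The sublevel-set conclusion, intersected with the convex half-space $u\ge 0$, is exactly what ``affinely convex in horospherical coordinates'' asserts, so your proof is complete and self-contained.
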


\subsection{Lattices and isometries of \(\mathbb{H}^{2}_{\C}\)}
We say a subgroup \(\Gamma < \textrm{Isom}(\mathbb{H}^{2}_{\C})\) is a \textit{lattice} if \(\Gamma\) is a discrete subgroup of Isom(\(\mathbb{H}^{2}_{\C}\)) and \(\mathbb{H}^{2}_{\C}/\Gamma\) has finite volume.  We distinguish between the cases where \(\mathbb{H}^{2}_{\C}/\Gamma\) is compact and non-compact, and call the corresponding lattices cocompact and non-cocompact respectively.  The distinction comes down to whether or not \(\Gamma\) contains isometries of a certain type.
\\[1\baselineskip]
Elements of Isom\((\mathbb{H}^{2}_{\C})\) can be roughly characterized into three categories:\newline
1.\textit{Elliptic Isometries} - isometries that have a fixed point in \(\mathbb{H}^{2}_{\C}\). \newline
2.\textit{Parabolic Isometries} - isometries that do not have a fixed point in \(\mathbb{H}^{2}_{\C}\), but fix exactly one point of \(\partial_{\infty} \mathbb{H}^{2}_{\C}\).\newline
3.\textit{Loxodromic Isometries} - isometries that do not have a fixed point in \(\mathbb{H}^{2}_{\C}\), but fix exactly 2 points of \(\partial_{\infty} \mathbb{H}^{2}_{\C}\).
\par The well-known Godement compactness criterion states that a lattice in a semisimple Lie group defined over \(\Q\) contains parabolic elements if and only if it is non-cocompact.  We define a \(\textit{cusp point}\) of a lattice, \(\Gamma <\) Isom(\(\mathbb{H}^2_{\C}\)), to be a point of \(\partial_{\infty} \mathbb{H}^{2}_{\C}\) fixed by a parabolic element of \(\Gamma\), and a \(\textit{cusp group}\) of \(\Gamma\) to be a subgroup of the form \(\textrm{Stab}_{\Gamma}(p)\) where \(p \in  \partial_{\infty} \mathbb{H}^{2}_{\C}\) is a cusp point of \(\Gamma\).  A well-known result of Zink tells us that since \(\Q(i\sqrt{2})\) and \(\Q(i\sqrt{11})\) have class number one, the Picard modular groups for \(d=2,11\) both have a single cusp \cite{13}.

\section{Primitive integral lifts and levels}
\subsection{Primitive integral lifts}
The Picard modular groups, PU\((2,1,\mathcal{O}_{d})\), are examples of \(\textit{integral lattices}\), since the entries of these matrices come from the ring of integers, \(\mathcal{O}_{d}\).  We define a \(\textit{unitary integral lattice}\) to be a lattice that is contained in \(U(H,\mathcal{O}_{E})\) for some number field \(E\), with ring of integers
\(\mathcal{O}_{E}\), and Hermitian form \(H = \langle \cdot,\cdot \rangle\) defined over \(E\).  We say that an integral vector, \(P_{0} = (p_1,p_2,p_3)^{T} \in \mathcal{O}^{3}_{d}\) is \(\textit{primitive}\) if it has no non-trivial integral submultiples, that is, if \(\lambda^{-1}P_0 \in \mathcal{O}^{3}_{d}\) for some \(\lambda \in \mathcal{O}_{d}\), then \(\lambda\) is a unit in \(\mathcal{O}_{d}\).  If \(p\) is an \(\mathcal{O}_{d}\)-rational point in \(\C\P^2\), that is, the image under projectivization of a vector \(P = (p_1,p_2,p_3)^{T}\), a \(\textit{primitive integral lift}\) of \(p\) is any lift \(P_{0}\) of \(p\) to \(\mathcal{O}^{3}_{d}\) which is a primitive integral vector.  The following ``uniqueness'' of primitive integral lifts is stated without proof in \cite{8} as Lemma 1.  We provide a proof below for completeness.

\begin{lemma}\label{liftunique}
If \(\mathcal{O}_{d}\) is a principal ideal domain, then primitive integral lifts are unique up to multiplication by a unit.
\end{lemma}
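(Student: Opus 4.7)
The plan is to reduce the statement to elementary divisibility in the PID $\mathcal{O}_d$. Suppose $P_0 = (p_1,p_2,p_3)^T$ and $Q_0 = (q_1,q_2,q_3)^T$ are two primitive integral lifts of the same projective point $p \in \C\P^2$. Since both project to $p$, there is a nonzero scalar $\lambda \in \C^*$ with $Q_0 = \lambda P_0$. The task is to show $\lambda$ is a unit in $\mathcal{O}_d$.

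The first step is to observe that $\lambda$ must actually live in the field of fractions $\Q(i\sqrt{d})$: picking any coordinate with $p_i \neq 0$, we have $\lambda = q_i/p_i$ with $p_i,q_i \in \mathcal{O}_d$. Since $\mathcal{O}_d$ is a PID (hence a UFD), we may write $\lambda = a/b$ in lowest terms, meaning $a,b \in \mathcal{O}_d$ with $\gcd(a,b)$ a unit.

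The second step is to use the relation $b q_i = a p_i$ (which holds for every $i$). Since $b \mid a p_i$ and $\gcd(a,b)$ is a unit, we conclude $b \mid p_i$ for each $i = 1,2,3$. Therefore $b$ divides $\gcd(p_1,p_2,p_3)$. But $P_0$ is primitive, so $\gcd(p_1,p_2,p_3)$ is a unit, forcing $b$ to be a unit as well. In particular, $\lambda \in \mathcal{O}_d$. The symmetric argument, applied to $\lambda^{-1} = b/a$ and the primitive lift $Q_0$, shows that $a$ is also a unit in $\mathcal{O}_d$, so $\lambda = a/b$ is a unit, as desired.

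There is no real obstacle here; the only subtlety is carefully invoking the UFD structure of $\mathcal{O}_d$ to justify writing $\lambda$ in lowest terms and then using coprimality to pass divisibility from $a p_i$ to $p_i$. The primitivity hypothesis on \emph{both} lifts is used, once for each direction, to conclude that both numerator and denominator are units.
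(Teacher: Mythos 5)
Your proof is correct and follows essentially the same route as the paper: write $\lambda$ as a fraction $a/b$ in lowest terms in the fraction field, use unique factorization to show the denominator divides all entries of the primitive lift and hence is a unit, then use primitivity again to conclude the numerator (equivalently, $\lambda$ itself) is a unit. The only cosmetic difference is that you phrase the divisibility step via Euclid's lemma and the identification of primitivity with ``$\gcd(p_1,p_2,p_3)$ is a unit'' (valid in a PID and equivalent to the paper's definition via integral submultiples), whereas the paper spells out explicit prime factorizations of every entry.
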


\begin{proof}
 Let \(p\) be an \(\mathcal{O}^{3}_{d}\)-rational point of \(\C\P^{2}\), and let \(P_1\), \(P_2\) be two primitive integral lifts of \(p\).  As \(P_1\) and \(P_2\) represent two lifts of \(p\), we have
 \[P_1 = \lambda P_2\] for some \(\lambda \in \C-\{0\}\).  Denote \(P_1 = (p_{11},p_{12},p_{13})^T\) and \(P_2 = (p_{21},p_{22},p_{23})^T\) where each \(p_{ij} \in \mathcal{O}_{d}\), and \(1\leq i\leq2\), \(1\leq j\leq3\).  As \(P_1\) and \(P_2\) are lifts of an element of \(\C\P^2\), \(P_1\) and \(P_2\) have at least one pair of corresponding non-zero entries.  This implies \(\lambda \in \Q(i\sqrt{d})-\{0\}\), and since \(\mathcal{O}_{d}\) is a principal ideal domain, we can write \(\lambda = \frac{\lambda_1}{\lambda_2}\) for some \(\lambda_1,
 \lambda_2 \in \mathcal{O}_{d}\) with gcd\((\lambda_1,\lambda_2) = 1\).  Now, since \(\mathcal{O}_{d}\) is a principal ideal domain, it is also a unique factorization domain.  This implies we can write \(p_{ij} = u_{ij}p_{ij1}...p_{ijn_{ij}}\) and \(\lambda_m = v_{m}\lambda_{m1}...\lambda_{mn_{m}}\) where \(n_{ij},n_{m} \in \N\), \(p_{ijk}, \lambda_{mh} \in \mathcal{O}_{d}\) are primes, and \(u_{ij}, v_{m}\) are units for \(1\leq i,m \leq 2\), \(1 \leq j \leq 3\), \(1\leq k \leq n_{ij}\), and \(1\leq h \leq n_{m}\).
 \\[1\baselineskip]
 From the equation \(P_1 = \lambda P_2\), and the decompositions above, we can write
 \[v_2\lambda_{21}...\lambda_{2n_{2}}u_{1j}p_{1j1}...p_{1jn_{1j}} = 
  v_1\lambda_{11}...\lambda_{1n_{1}}u_{2j}p_{2j1}...p_{2jn_{2j}}
 \]
 Consider any \(\lambda_{2h_{2}}\), where \(1 \leq h_{2} \leq n_{2}\).  \(\lambda_{2h_{2}}\) is a prime that divides the right side of the equation above.  \(\lambda_{2h_{2}}\) cannot divide \(v_1\) or \(u_{2j}\), as these are units, and \(\lambda_{2h_{2}}\) cannot divide any \(\lambda_{1h_{1}}\), \(1\leq h_1 \leq n_1\) since gcd\((\lambda_1,\lambda_2) = 1\).  These statements imply that \(\lambda_{2h_{2}}\) must divide \(p_{2jk}\) for some \(1\leq k\leq n_{2j}\).  Since this holds for \(1\leq j \leq 3\), and \(P_2\) is a primitive integral lift, we have that \(\lambda_{2h_{2}}\) is a unit.  Since this holds for each \(1 \leq h_{2} \leq n_{2}\), \(\lambda_2\) is a unit, and \(\lambda\) is a member of \(\mathcal{O}_{d}\).  By the definition of primitive integral lifts, \(\lambda\) must be a unit as well.
 \end{proof}
 It turns out that any column-vector of a unitary matrix with entries from \(\mathcal{O}_{d}\) is a primitive integral vector.  Moreover, under certain criteria, unitary matrices send isotropic primitive integral vectors to primitive integral vectors.  Lemma 2 from \cite{8} tells us the following:
 
 \begin{lemma}[\cite{8}]\label{matrix}
 Any column-vector of a matrix \(A \in U(2,1;\mathcal{O}_{d})\) is a primitive integral vector.  Moreover, if \(\mathcal{O}_{d}\) is a principal ideal domain and one of the basis vectors, \(e_{i}\), is \(\langle \cdot , \cdot \rangle\)-isotropic, then for any primitive integral  \(\langle \cdot , \cdot \rangle\)-isotropic vector, \(P\), and any \(A \in U(2,1;\mathcal{O}_{d})\), \(AP\) is a primitive integral vector.
\end{lemma}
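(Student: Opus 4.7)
The plan is to reduce both statements to the single observation that the inverse $A^{-1}$ again has entries in $\mathcal{O}_d$. To see this, I would start from the defining equation $A^{*}JA = J$ of the unitary condition, take determinants to get $\overline{\det A}\cdot \det A = 1$, and conclude that $\det A$ is a unit (in fact a root of unity) in $\mathcal{O}_d$. By Cramer's rule, the entries of $A^{-1}$ are then integer polynomials in the entries of $A$ divided by a unit, so $A^{-1}\in\U(2,1;\mathcal{O}_d)$; alternatively, one can invoke the identity $A^{-1} = J^{-1}A^{*}J$ together with $J^{-1}=J$, both of which are integral.

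Granted this, the first assertion is almost immediate. Suppose a column $c_j = Ae_j$ decomposes as $c_j = \lambda v$ with $\lambda \in \mathcal{O}_d$ and $v\in \mathcal{O}_d^3$; we wish to show $\lambda$ is a unit. Applying $A^{-1}$ yields $e_j = \lambda A^{-1}v$, and since $A^{-1}v \in \mathcal{O}_d^3$ while the $j$-th coordinate of $e_j$ equals $1$, $\lambda$ must divide $1$ in $\mathcal{O}_d$ and is therefore a unit. The second assertion is proved by exactly the same device applied to $P$ in place of $e_j$: if $AP = \lambda Q$ with $\lambda\in\mathcal{O}_d$ and $Q\in \mathcal{O}_d^3$, then $P = \lambda A^{-1}Q$ with $A^{-1}Q\in\mathcal{O}_d^3$, and primitivity of $P$ forces $\lambda$ to be a unit.

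Notice that this argument does not actually use the isotropy of $P$, the isotropy of some $e_i$, or even the full PID hypothesis on $\mathcal{O}_d$ beyond its being an integral domain; those assumptions evidently reflect the setting in which the lemma will later be applied (to integral lifts of isotropic rational boundary points) and the way Lemma \ref{liftunique} is phrased. The only step with any real content, and hence the closest thing to an obstacle, is verifying that $A^{-1}$ has entries in $\mathcal{O}_d$; once this is in hand, both conclusions collapse to a one-line divisibility check in $\mathcal{O}_d$.
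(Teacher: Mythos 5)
The paper does not actually prove this lemma: it is quoted verbatim from Mark--Paupert \cite{8} (their Lemma 2) with no argument supplied, so there is no in-paper proof to compare yours against. On its own merits, your proof is correct and complete. The crux, as you say, is that $A^{-1}\in M_3(\mathcal{O}_d)$; the identity $A^{-1}=J^{-1}A^{*}J=JA^{*}J$ is the cleanest route, since $J$ is an integral involution and $\mathcal{O}_d$ is stable under complex conjugation (it is generated over $\Z$ by $i\sqrt{d}$ or $\frac{1+i\sqrt{d}}{2}$, whose conjugates lie in the same ring), and the determinant computation $\overline{\det A}\,\det A=1$ confirms $\det A$ is a unit if you prefer Cramer's rule. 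The two divisibility checks that follow are exactly right: if $\lambda^{-1}Ae_j\in\mathcal{O}_d^3$ then $\lambda^{-1}e_j=A^{-1}(\lambda^{-1}Ae_j)\in\mathcal{O}_d^3$ forces $\lambda\mid 1$, and if $\lambda^{-1}AP\in\mathcal{O}_d^3$ then $\lambda^{-1}P\in\mathcal{O}_d^3$ and primitivity of $P$ forces $\lambda$ to be a unit. Your closing observation is also accurate: for the statement as literally phrased, neither the isotropy of $P$, nor the isotropy of some $e_i$, nor the PID hypothesis is used anywhere; those hypotheses are carried along from the context in \cite{8}, where uniqueness of primitive lifts (Lemma \ref{liftunique}) and the single-orbit structure of isotropic rational points do require them. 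Flagging that the lemma holds in greater generality than stated is a genuine (if minor) improvement over simply citing it.
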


\subsection{Levels and depths}
\noindent Next, we define the level of \(\mathcal{O}_{d}\)-rational points on the boundary at infinity.

\begin{defn}\label{level}
 Given two \(\mathcal{O}_{d}\)-rational points, \(p,q \in \partial_{\infty} \mathbb{H}^{2}_{\C}\), the \(\underline{level}\) between \(p\) and \(q\), denoted \(lev(p,q)\), is \(|\langle P_{0},Q_{0} \rangle|^{2}\) for any two primitive integral lifts \(P_{0}, Q_{0}\) of \(p,q\) respectively.  When we are given a preferred \(\mathcal{O}_{d}\)-rational point \(\infty \in \partial_{\infty} \mathbb{H}^{2}_{\C}\), the \(\underline{depth}\) of an \(\mathcal{O}_{d}\)-rational \(p \in \partial_{\infty} \mathbb{H}^{2}_{\C}\) is the level between \(p\) and \(\infty\).
\end{defn}

By Lemma \ref{liftunique}, we see that the level between two \(\mathcal{O}_{d}\)-rational points is well defined when \(\mathcal{O}_d\) is a principal ideal domain.  Moreover, unitary matrices preserve levels since levels are determined by the form \(\langle \cdot,\cdot \rangle\).  Levels will allow us to find the maximal height, \(u\), of a horosphere \(H_{u} = \partial B_{u}\) based at \(\infty \in \partial_{\infty} \mathbb{H}^{2}_{\C}\), such that the \(\Gamma\)-translates of \(B_u\) cover \(\mathbb{H}^{2}_{\C}\).  This relies on the following result, which is Proposition 1 of \cite{8}.

\begin{lemma}[\cite{8}]\label{intersect}
 Let \(g \in \textrm{U}(2,1)\) satisfy \(g(\infty) \neq \infty\), \(S = I_{g^{-1}}\) (Definition \ref{ford}), and \(H_{u_{0}}\) the horosphere based at \(\infty\) of height \(u_{0}>0\).  Then \(H_{u_0} \cap gH_{u_0} = H_{u_0} \cap S\).
\end{lemma}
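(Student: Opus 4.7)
The plan is to reduce both $w \in gH_{u_0}$ and $w \in S$ to a single algebraic equality on lifts of $w$, using the fact that the horospherical height of a point is determined by its inner products with $\psi(\infty)$.

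First, I would derive a lift-invariant expression for the horospherical height. A direct computation with $J$ shows $\langle \psi(z,v,u), \psi(z,v,u)\rangle = -u$ and $\langle \psi(z,v,u), \psi(\infty)\rangle = 1$. Rescaling both factors by the same scalar shows that any lift $Z$ of a point $w \in \mathbb{H}^2_{\C}$ with height $u(w)$ satisfies
\[ u(w) \;=\; -\frac{\langle Z,Z\rangle}{|\langle Z,\psi(\infty)\rangle|^2}, \]
so in particular $w \in H_{u_0}$ iff this ratio equals $u_0$.

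Next, to capture $w \in gH_{u_0}$, I would apply the same formula to $g^{-1}w$ using the lift $g^{-1}Z$. Unitarity of $g$ gives $\langle g^{-1}Z, g^{-1}Z\rangle = \langle Z,Z\rangle$ and, via $\langle g^{-1}Z, \psi(\infty)\rangle = \langle g^{-1}Z, g^{-1}(g\psi(\infty))\rangle = \langle Z, g\psi(\infty)\rangle$, yields
\[ u(g^{-1}w) \;=\; -\frac{\langle Z,Z\rangle}{|\langle Z, g\psi(\infty)\rangle|^2}. \]
The hypothesis $g(\infty) \neq \infty$ keeps $g\psi(\infty)$ from being a scalar multiple of $\psi(\infty)$, so this genuinely differs from $u(w)$ in general.

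Finally, for $w \in H_{u_0}$ we have $u(w) = u_0$, and since $\langle Z,Z\rangle \neq 0$, the condition $u(g^{-1}w) = u_0$ reduces to $|\langle Z, g\psi(\infty)\rangle| = |\langle Z, \psi(\infty)\rangle|$. Taking $Z = \psi(w)$ as the standard lift makes this the defining equation for $I_{g^{-1}} = S$ from Definition \ref{ford}. Hence $H_{u_0} \cap gH_{u_0} = H_{u_0} \cap S$. The only delicate step is the unitarity shift $\langle g^{-1}Z, \psi(\infty)\rangle = \langle Z, g\psi(\infty)\rangle$; the remainder is bookkeeping.
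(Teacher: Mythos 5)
Your proof is correct. Note that the paper does not actually prove this lemma --- it is imported verbatim from Mark--Paupert \cite{8} (their Proposition 1) --- so there is no in-paper argument to compare against; your computation is the standard one and supplies the missing details accurately. The key identities all check out: $\langle \psi(z,v,u),\psi(z,v,u)\rangle = -u$ and $\langle \psi(z,v,u),\psi(\infty)\rangle = 1$ give the lift-invariant height formula, the unitarity shift $\langle g^{-1}Z,\psi(\infty)\rangle = \langle Z, g\psi(\infty)\rangle$ is valid, and on $H_{u_0}$ with $u_0>0$ the condition $u(g^{-1}w)=u_0$ does reduce exactly to the defining equation of $I_{g^{-1}}$ from Definition \ref{ford}, since $I_{g^{-1}}$ replaces $g^{-1}$ by its inverse $g$ in that definition.
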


\noindent Lemma \ref{intersect} gives us the following corollary, which is listed as Corollary 1 in \cite{8}.  A proof is provided in \cite{8}, and we will flesh out some additional details below.

\begin{cor}[\cite{8}]\label{covdepth}
 For any \(\mathcal{O}_{d}\)-rational point \(p \in \partial_{\infty} \mathbb{H}^{2}_{\C}\) with depth \(n \geq1\), and any integral matrix \(A_{p} \in \PU(2,1;\mathcal{O}_{d})\) satisfying \(A_p(\infty) = p\), the set \(H_{u} \cap A_{p}(H_{u})\) is empty if and only if \(u > \frac{2}{\sqrt{n}}\).
\end{cor}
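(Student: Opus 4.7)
The plan is to chain together Lemmas \ref{intersect}, \ref{Cyganford}, and \ref{matrix}, reducing the corollary to a short minimization of the extended Cygan metric on the horosphere $H_u$. Specifically, applying Lemma \ref{intersect} with $g = A_p$ gives $H_u \cap A_p(H_u) = H_u \cap S$ with $S = I_{A_p^{-1}}$, and Lemma \ref{Cyganford} identifies $S$ as the extended Cygan sphere centered at $(A_p^{-1})^{-1}(\infty) = A_p(\infty) = p$ with radius $r = \sqrt{2/|(A_p^{-1})_{3,1}|}$. The corollary thus reduces to showing that this sphere meets $H_u$ precisely when $u \leq 2/\sqrt{n}$, which I would carry out in two steps: (i) identify $|(A_p^{-1})_{3,1}|$ with $\sqrt{n}$, and (ii) compute the minimum extended Cygan distance from $p$ to $H_u$.

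For step (i), I would observe that $e_1 = (1,0,0)^T$ is an isotropic primitive integer lift of $\infty$, so Lemma \ref{matrix} guarantees that the first column $A_p e_1$ is a primitive integral lift of $p$. Evaluating $\langle A_p e_1, e_1 \rangle = e_1^* J (A_p e_1) = (A_p)_{3,1}$ yields $n = |(A_p)_{3,1}|^2$. The unitarity relation $A_p^* J A_p = J$ rearranges to $A_p^{-1} = J A_p^* J$, from which a direct entry computation gives $(A_p^{-1})_{3,1} = \overline{(A_p)_{3,1}}$; hence $|(A_p^{-1})_{3,1}| = \sqrt{n}$ and $r^2 = 2/\sqrt{n}$.

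For step (ii), I would substitute a generic $(z, v, u) \in H_u$ and $p = (z_0, v_0, 0)$ into \eqref{excymet} to obtain
\[ d_{XC}((z,v,u),(z_0,v_0,0))^4 = (|z - z_0|^2 + u)^2 + (v - v_0 + 2\textrm{Im}(z \bar{z_0}))^2. \]
The first summand is minimized at $z = z_0$ with value $u^2$; the second then reduces to $(v - v_0)^2$ and vanishes at $v = v_0$ (using $\textrm{Im}(z_0 \bar{z_0}) = 0$). Hence $\min_{H_u} d_{XC}(\cdot, p) = \sqrt{u}$, attained at $(z_0, v_0, u)$. Since $H_u$ is connected and $d_{XC}(\cdot, p)$ grows without bound on $H_u$, continuity forces $H_u \cap S \neq \emptyset$ iff $\sqrt{u} \leq r$, equivalently $u \leq 2/\sqrt{n}$; negating gives the stated criterion. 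The main obstacle is really the bookkeeping in step (i), where the algebraic depth $n$ must be translated into the metric radius $r$ cleanly via the Hermitian unitarity identity; step (ii) is routine once \eqref{excymet} is applied.
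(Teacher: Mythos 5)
Your proof is correct and follows essentially the same route as the paper's: reduce via Lemma \ref{intersect} and Lemma \ref{Cyganford} to intersecting $H_u$ with the extended Cygan sphere of radius $(4/n)^{1/4}$ centered at $p$, identify $n=|(A_p)_{3,1}|^2$ via Lemma \ref{matrix}, and minimize $d_{XC}(\cdot,p)$ over $H_u$. If anything, you make explicit two points the paper glosses over --- the identity $(A_p^{-1})_{3,1}=\overline{(A_p)_{3,1}}$ needed to pin down the radius, and the connectedness/unboundedness argument justifying the backward implication $u\le 2/\sqrt{n}\Rightarrow H_u\cap S\neq\emptyset$ --- but these are refinements of the same argument, not a different one.
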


\begin{proof}
As \(A_p(\infty) = p\), and \(e_1 = (1,0,0)^T\) is a lift of \(\infty\), the first column of the unitary matrix associated to \(A_p\) is a primitive integral lift, \(P_0\), of \(p\), by Lemma \ref{matrix}.  This implies that the depth, \(n\), of \(p\) is \(|\langle P_0,e_1 \rangle|^2 = |{A_p}_{3,1}|^2\).  Since \(dep(p) \geq 1\), necessarily \(A_p(\infty) = p \neq \infty\) (\(\infty\) has depth zero).  Letting \(S = I_{A_{p}^{-1}}\), by Lemma \ref{Cyganford}, we have that the radius of \(S\) equals \((\frac{4}{n})^{\frac{1}{4}}\).  By Lemma \ref{intersect}, \(H_{u} \cap A_{p}^{-1}(H_{u}) = H_{u} \cap S \), which implies
\begin{align*}
 H_{u} \cap A_{p}(H_{u}) \neq \emptyset &\iff H_{u} \cap A_{p}^{-1}(H_u) \neq \emptyset \iff H_{u} \cap S \neq \emptyset\\ 
 &\iff \exists(z,v,u) \in H_u \textrm{ s.t. } d_{XC}(w,p) = (\frac{4}{n})^{\frac{1}{4}} \\
 &\iff \exists (z,v,u) \in H_u \textrm{ s.t. } |(|z - p_z|^{2}+|u - 0|)^{2} + |v - p_v +2Im(z\cdot\bar{p_z})|^{2}| = \frac{4}{n} \\
 & \iff |u|^2 \leq \frac{4}{n} \\
 & \iff u \leq \frac{2}{\sqrt{n}}
 \end{align*}
 Above, we used the equation for the extendend Cygan distance, the fact that \(u>0\), and the horospherical coordinates \((p_z,p_v,0)\) for \(p\).
\end{proof}

\begin{defn}\label{coveringdepth}
 The \underline{covering height} of \(\Gamma = \PU(2,1;\mathcal{O}_{d})\), denoted \(u^{cov}\), is the maximal height such that \(\Gamma B_{u^{cov}}\) covers \(\mathbb{H}^2_{\C}\).  The \underline{covering depth} of \(\Gamma\) is the unique \(n \in \N\) such that \(\frac{2}{\sqrt{n+1}} < u^{cov} \leq \frac{2}{\sqrt{n}}\).
\end{defn}

\section{Macbeath's Theorem}
\noindent The main tool for applying this method comes from a classical result of Macbeath which we now state.

\begin{thm}[Macbeath's Theorem \cite{6}]
Let \(\Gamma\) be a group acting by homeomorphisms on a topological space \(X\).  Let \(V\) be an open subset of \(X\) whose \(\Gamma\)-translates cover \(X\).\newline
1. If \(X\) is connected, then the set \(E(V) = \{\gamma \in \Gamma : V \cap \gamma V \neq \emptyset\}\) generates \(\Gamma\). \newline
2. If \(X\) is also simply-connected and \(V\) is path-connected, then \(\Gamma\) admits a presentation with generating set \(E(V)\) and relations \(\gamma \cdot \gamma ' = \gamma \gamma'\) for all \(\gamma ,\gamma ' \in E(V)\) such that \(V \cap \gamma V \cap \gamma \gamma ' V \neq \emptyset\).
\end{thm}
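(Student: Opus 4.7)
The argument splits naturally into the two claims, and I plan to handle each in turn.

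For (1), I would use a connectedness argument. Let $H = \langle E(V)\rangle \leq \Gamma$, and consider the partition $X = U \sqcup W$ where $U = \bigcup_{h \in H} hV$ and $W = \bigcup_{\gamma \in \Gamma \setminus H} \gamma V$. Both sets are open in $X$, and their union is $X$ because the $\Gamma$-translates of $V$ cover. They are disjoint: if $hV \cap \gamma V \neq \emptyset$ with $h \in H$ and $\gamma \in \Gamma$, then $V \cap h^{-1}\gamma V \neq \emptyset$, so $h^{-1}\gamma \in E(V) \subseteq H$, forcing $\gamma \in H$. Since $V \subseteq U$ is nonempty and $X$ is connected, $W = \emptyset$, giving $\Gamma = H$.

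For (2), my plan is to construct a covering space of $X$ whose deck group controls the kernel of the natural surjection from the presented group, then use simple-connectedness of $X$ to trivialize that kernel. Let $\hat{\Gamma} = \langle E(V) \mid \hat{\gamma}\hat{\gamma}' = \widehat{\gamma\gamma'}\rangle$ be the abstract group with the stated presentation (for each triple with $V \cap \gamma V \cap \gamma\gamma' V \neq \emptyset$, in which case $\gamma,\gamma',\gamma\gamma'$ all lie automatically in $E(V)$). These relations hold in $\Gamma$, so the assignment $\hat{\gamma} \mapsto \gamma$ gives a homomorphism $\phi : \hat{\Gamma} \to \Gamma$, which is surjective by part (1). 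The goal is to prove $\phi$ is injective.

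I would then build $\tilde{X} = (\hat{\Gamma} \times V)/\sim$, with $\hat{\Gamma}$ discrete and $\sim$ generated by the elementary identifications $(\hat{\gamma}, x) \sim (\hat{\gamma}\hat{\delta}, \delta^{-1}x)$ whenever $\hat{\delta} \in E(V)$ and $x \in V \cap \delta V$. The map $p : \tilde{X} \to X$, $[\hat{\gamma}, x] \mapsto \phi(\hat{\gamma})x$, is well-defined and a local homeomorphism onto each $\phi(\hat{\gamma})V$; checking that the fiber over any $y \in X$ is a disjoint union of evenly-covered sheets makes $p$ a covering map. Path-connectedness of $V$, together with part (1), lets me connect any $[\hat{\gamma},x]$ to a basepoint $[\hat{e}, x_0]$ by concatenating paths in each translate of $V$ visited by a word for $\hat{\gamma}$ in the generators, so $\tilde{X}$ is path-connected. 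Since $X$ is simply connected, $p$ must be a homeomorphism.

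To finish, note that $\hat{\Gamma}$ acts on $\tilde{X}$ by left multiplication on the first coordinate and $p$ is equivariant via $\phi$. Any $\hat{\gamma}' \in \ker\phi$ then acts trivially on $X$, hence (as $p$ is a homeomorphism) trivially on $\tilde{X}$; in particular $[\hat{\gamma}', x_0] = [\hat{e}, x_0]$, so $\hat{\gamma}'$ is a composition of elementary moves that returns to its starting point. The triple-intersection relations in $\hat{\Gamma}$ are designed precisely to force any such closed chain to represent the identity, so $\hat{\gamma}' = \hat{e}$ and $\phi$ is an isomorphism. The main obstacle, as I see it, is that last step: verifying rigorously that the triple-intersection relations are not merely necessary but sufficient to trivialize every closed chain of local identifications in $\tilde{X}$. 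This is a van Kampen-type assertion for the cover of $X$ by $\Gamma$-translates of $V$, and I would expect the bookkeeping, especially without a path-connectedness hypothesis on the double intersections $V \cap \gamma V$, to be where most of the care is needed.
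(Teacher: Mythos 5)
The paper does not prove this theorem: it is quoted verbatim from Macbeath \cite{6} and used as a black box, so there is no in-paper argument to compare against. Judged on its own terms, your part (1) is complete and correct, and it is the standard argument (the only implicit hypothesis is \(V\neq\emptyset\), which is forced by the covering hypothesis unless \(X=\emptyset\)). Your part (2) follows the same development/covering-space strategy as Macbeath's original proof, so the architecture is right; but the step you defer at the end is not bookkeeping --- it is the entire content of the theorem, and as written the proposal does not close it.

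Concretely, two things remain to be verified. First, that the elementary identifications on \(\hat{\Gamma}\times V\) already form an equivalence relation fiberwise, i.e.\ that \(p\) has discrete fibers and is a genuine covering rather than merely a local homeomorphism: if \((\hat{\alpha},x)\), \((\hat{\beta},y)\), \((\hat{\gamma},z)\) all lie over the same point \(w\in X\) and are linked by two elementary moves, then \(w\in\phi(\hat{\alpha})V\cap\phi(\hat{\beta})V\cap\phi(\hat{\gamma})V\), so with \(\delta=\phi(\hat{\alpha})^{-1}\phi(\hat{\beta})\) and \(\delta'=\phi(\hat{\beta})^{-1}\phi(\hat{\gamma})\) one has \(V\cap\delta V\cap\delta\delta'V\neq\emptyset\), the relation \(\hat{\delta}\hat{\delta}'=\widehat{\delta\delta'}\) holds in \(\hat{\Gamma}\), and the composite move is again elementary. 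This is exactly where the triple-intersection relations are consumed, and note that every chain of elementary moves stays in a single fiber of \(p\), so this transitivity computation is all that is needed; in particular your worry about path-connectedness of the double intersections \(V\cap\gamma V\) is immaterial for this statement, because the relations are indexed by group elements rather than by components of intersections. Second, your final deduction ``\(p\) is a homeomorphism, hence \(\ker\phi\) acts trivially, hence \(\ker\phi=1\)'' needs freeness of the \(\ker\phi\)-action on \(\tilde{X}\): if \(\hat{\sigma}\in\ker\phi\) fixes \([\hat{e},x_0]\), the fiberwise description above gives \(\hat{\sigma}=\widehat{e}\), and one must separately observe that \(\widehat{e}=1\) in \(\hat{\Gamma}\) (which follows from the relation \(\widehat{e}\cdot\widehat{e}=\widehat{e}\), available since \(V\cap eV\cap eV=V\neq\emptyset\)). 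To promote the local homeomorphism to a covering (or to run the simple-connectivity argument directly) one also needs path and homotopy lifting, obtained by Lebesgue subdivision of \([0,1]\) and \([0,1]^2\) against the open cover \(\{\gamma V\}\); the consistency of the lift at subdivision vertices is again the triple-relation computation. None of this is beyond reach, but until it is written out the proposal is an outline of Macbeath's proof rather than a proof.
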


We state the result above as written in \cite{6}.  Macbeath's theorem tells us that \(\Gamma\) admits a presentation, \(\langle S:R \rangle\), where \(S = \{e_{\gamma} : \gamma \in E(V)\}\) and \(R = \{e_{\gamma} \cdot e_{\gamma'} = e_{\gamma\gamma'}\}\), where ``\(\cdot\)'' is the word product and \(\gamma,\gamma',\gamma\gamma'\) are elements of \(E(V)\) satisfying the triple intersection property.  In our context, \(\Gamma = \PU(2,1;\mathcal{O}_{d})\),  \(X = \mathbb{H}^2_{\C}\), and \(V = B_{u^{cov}}\).  For the relevant calculations, we will use the basepoint \(\infty \in \partial_{\infty} \mathbb{H}^{2}_{\C}\) for our cusp point representative, and use \(\infty = \pi((1,0,0)^{T})\) in the Siegel model of complex hyperbolic 2-space (Section \ref{sieg}).  Below, we collect the relevant objects and notation used in our derivations throughout Section 4.

\subsubsection*{Notation}
\begin{itemize}
 \item \(\Gamma\): The lattice, \(\PU(2,1;\mathcal{O}_{d})\)
 \item \(B\): The fundamental open horoball based at \(\infty\), \(V = B_{u^{cov}}\)
 \item \(\infty\): A fixed cusp representative, \(\pi((1,0,0)^T)\)
 \item \(\Gamma_{\infty}\): The cusp stabilizer, \(\textrm{Stab}_{\Gamma}(\infty)\), with presentation \(\langle S_{\infty}:R_{\infty} \rangle\)
 \item \(n\): The covering depth of \(\Gamma\)
 \item \(u^{cov}\): The covering height corresponding to covering depth of \(\Gamma\) (Definition \ref{coveringdepth})
 \item \(D_{\infty}\): A compact fundamental domain for action of \(\Gamma_{\infty}\) on \(H_{u^{cov}} \simeq \partial_{\infty} \mathbb{H}^{2}_{\C} - \{\infty\}\)
 \item \(p_i\): A representative of the \(\Gamma_{\infty}\)-orbit of an \(\mathcal{O}_d\)-rational point of at most depth \(n\) in \(D_{\infty}\)
 \item \(A_i\): An element of \(\Gamma\) satisfying \(A_i(\infty) = p_i\)
 \item \(\gamma_{\infty}^{j}\): an element of \(\Gamma_{\infty}\) (\(j\) is a superscript, not an exponent)
\end{itemize}

\noindent Note, since \(\Gamma\) has a single cusp in the cases \(d=2,11\), each \(p_i\) is in the \(\Gamma\)-orbit of \(\infty\), and it is possible, in theory, to find such a matrix satisfying \(A_i(\infty) = p_i\).

\subsection{Covering \(\mathbb{H}^2_{\C}\)}\label{horcov}
\begin{prop}\label{covarg}
 Let \(B_u\) denote the open horoball of height \(u>0\) based at \(\infty\).  If the \(\Gamma\)-translates of \(B_u\) cover \(H_u = \partial B_u\), then the \(\Gamma\)-translates of \(B_u\) cover \(\mathbb{H}^2_{\C}\).
\end{prop}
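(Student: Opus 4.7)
The plan is to show that $V := \bigcup_{\gamma \in \Gamma} \gamma B_u$ is both open and closed in $\mathbb{H}^2_{\C}$; since $V \supset B_u$ is non-empty and $\mathbb{H}^2_{\C}$ is connected, this will force $V = \mathbb{H}^2_{\C}$. Openness is automatic because $V$ is a union of open horoballs.

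For closedness, I would first rewrite $V$ as a union of closed sets. The hypothesis $H_u \subset V$, combined with the $\Gamma$-invariance of $V$, gives $\gamma H_u = \gamma(H_u) \subset \gamma \cdot V = V$ for every $\gamma \in \Gamma$. Since $\bar{B_u} = B_u \cup H_u$ (closure taken in $\mathbb{H}^2_{\C}$), this yields $\gamma \bar{B_u} \subset V$ for every $\gamma$, and therefore $V = \bigcup_{\gamma} \gamma \bar{B_u}$---a union of closed horoballs.

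The remaining---and main---step is to verify that this union is locally finite in $\mathbb{H}^2_{\C}$. A direct computation with the Hermitian form (analogous to the one in the proof of Corollary \ref{covdepth}) shows that $\gamma B_u = \{q \in \mathbb{H}^2_{\C} : d_{XC}(q,p)^4 < 4 u_q/(nu)\}$, where $p = \gamma(\infty)$ has depth $n$ and $u_q$ denotes the horospherical height of $q$; in particular $\gamma B_u$ reaches maximal horospherical height $4/(nu)$. So a compact neighborhood of an interior point $y$ of height $u_y > 0$ is met only by horoballs based at cusps of depth $n \leq 4/(u \cdot u_y)$, a finite range; and within that range only finitely many $\mathcal{O}_{d}$-rational cusps can lie in the bounded Cygan region reachable by such horoballs (the depth-$n$ cusps form a finite union of $\Gamma_\infty$-orbits, each Cygan-discrete). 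Once local finiteness is established, $V = \bigcup_{\gamma} \gamma \bar{B_u}$ is closed, and connectedness of $\mathbb{H}^2_{\C}$ forces $V = \mathbb{H}^2_{\C}$. The principal difficulty is precisely this local-finiteness statement, which hinges on the explicit Siegel-model description of horoballs---so that deeper cusps yield strictly smaller horoballs reaching lower into $\mathbb{H}^2_{\C}$---and on the arithmetic discreteness of $\mathcal{O}_{d}$-rational boundary points of bounded depth.
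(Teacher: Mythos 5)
Your argument is correct, but the closedness step runs along a genuinely different track from the paper's. The paper also shows that $C=\bigcup_{\gamma\in\Gamma}\gamma B_u$ is open and closed, but it never passes to closed horoballs or to local finiteness: instead it uses compactness of the fundamental domain $D_\infty$ for $\Gamma_\infty$ acting on $H_u$ to extract a single $\epsilon>0$ such that the $\epsilon$-neighborhood of $D_\infty$ lies inside finitely many of the covering horoballs, then propagates this by the isometric actions of $\Gamma_\infty$ (to all of $H_u$) and of $\Gamma$ (to every translate $\gamma B_u$), concluding that $C$ contains a uniform $\epsilon$-neighborhood of itself and is therefore closed. That argument is softer: it needs no formula for translated horoballs and no arithmetic input beyond compactness of $D_\infty$. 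Your route rewrites $C$ as $\bigcup_\gamma \gamma\bar{B_u}$ (a correct use of the hypothesis $H_u\subset C$ together with $\Gamma$-invariance) and then proves local finiteness from the Siegel-model description $\gamma B_u=\{q: d_{XC}(q,\gamma\infty)^4<4u_q/(nu)\}$, which is indeed correct and gives the height bound $u_q<4/(nu)$, plus the discreteness of $\mathcal{O}_d$-rational points of bounded depth in bounded Heisenberg regions. This is more computational but establishes a strictly stronger structural fact (local finiteness of the horoball family) that the paper's proof does not yield. If you write your version in full, make two points explicit: translates with $\gamma(\infty)=\infty$ all coincide with $B_u$ itself (elements of $\Gamma_\infty$ preserve horospherical height), and two elements of $\Gamma$ with the same image of $\infty$ determine the same horoball, so your union is genuinely indexed by the discrete set of rational base points rather than by all of $\Gamma$.
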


\begin{proof}
 Denote \(C = \bigcup \limits_{\gamma \in \Gamma} \gamma B_u\) is nonempty.  We will show \(C\) is both open and closed in \(\mathbb{H}^2_{\C}\), hence \(C = \mathbb{H}^2_{\C}\).  By construction, \(C\) is nonempty.  Since \(C\) is a union of open sets, \(C\) is open.  To show \(C\) is closed, let \(x \in \bar{C}\).  There exist sequences \((\gamma_n)\) in \(\Gamma\) and \((x_n)\) in \(B_u\) such that \(\gamma_nx_n \rightarrow x\). Since we have a compact fundamental domain, \(D_{\infty}\), for the action of \(\Gamma_{\infty}\) on \(H_u = \partial B_u\), we need only finitely many horoball translates to cover \(D_{\infty}\) and we can choose a uniform \(\epsilon>0\) so that the \(\epsilon\)-neighborhood of \(D_{\infty}\) is covered by translates of \(B_u\).  As \(\Gamma\) acts on \(\mathbb{H}^2_{\C}\) by isometries, and every open horoball intersecting \(H_u\) is a \(\Gamma\)-translate of one of the horoballs covering \(D_{\infty}\), we can extend the \(\epsilon\)-neighborhood of \(D_{\infty}\) to all of \(H_u\).  Using the original horoball, \(B_u\), along with the \(\epsilon\)-neighborhood of \(H_u\), we get an \(\epsilon\)-neighborhood of \(B_u\).  Again, since \(\Gamma\) acts on \(\mathbb{H}^2_{\C}\) by isometries, it follows that the \(\epsilon\)-neighborhood of any translate of \(B_u\) is also covered by translates of \(B_u\).  Since \(\gamma_nx_n \rightarrow x\), we have \(d(\gamma_nx_n,x) < \epsilon\) for sufficiently large \(n\), so that \(x\) is contained in an \(\epsilon\)-neighborhood of \(\gamma_nx_n\) and \(x \in C\).  Thus, \(C\) is closed and \(C = \mathbb{H}^2_{\C}\).
 \end{proof}
Proposition \ref{covarg} tells us we need only cover \(H_u = \partial B_u\) by \(\Gamma\)-translates of \(B_u\) to cover \(\mathbb{H}^2_{\C}\).  By Lemma \ref{intersect}, we know \(H_{u} \cap A_i^{-1}(H_{u}) = H_{u} \cap S\), where \(S\) is the extended Cygan sphere with center \(p_i\) and radius \((\frac{4}{dep(p_i)})^{\frac{1}{4}}\).  This implies we can determine that translates by \(\Gamma\) of \(B_{u}\) cover \(\mathbb{H}^2_{\C}\) by obtaining a cover of \(H_{u}\) by extended Cygan balls centered on \(\partial_{\infty}\mathbb{H}^2_{\C}\).  Since we have a compact fundamental domain, \(D_{\infty} \subset H_{u^{cov}}\), for the action of \(\Gamma_{\infty}\) on \(H_{u^{cov}} \simeq \partial_{\infty} \mathbb{H}^{2}_{\C} - \{\infty\}\), we need only cover \(D_{\infty}\) by extended Cygan balls centered on \(\mathcal{O}_{d}\)-rational points in \(\partial_{\infty} \mathbb{H}^{2}_{\C}\).  We will obtain this covering of \(D_{\infty}\) by decomposing \(D_{\infty}\) into affine pieces, each given as a convex hull of a finite number of points in \(D_{\infty}\).  We will argue that these pieces cover the entirety of \(D_{\infty}\), and that the vertices defining each convex hull piece is contained in an extended Cygan ball centered on an \(\mathcal{O}_{d}\)-rational point in \(\partial_{\infty} \mathbb{H}^{2}_{\C}\).  By Lemma \ref{convex}, since extended Cygan balls are affinely convex, the entire convex hull defined by the set of vertices must be in the same extended Cygan ball.  Thus \(D_{\infty}\) is covered by extended Cygan balls centered on \(\partial_{\infty}\mathbb{H}^2_{\C}\), and we get a cover of \(\mathbb{H}^2_{\C}\) by the \(\Gamma\)-translates of \(B_u\).

\subsection{Generators}\label{gens}
\begin{prop}\label{gener}
 Suppose the \(\Gamma\)-translates of \(B\) cover \(\mathbb{H}_{\C}^2\).  Then \(\Gamma\) is generated by \(S_{\infty} \cup \mathcal{A}\), where \(S_{\infty}\) is the generating set for \(\Gamma_{\infty}\), and \(\mathcal{A}\) is a finite collection of matrices sending \(\infty\) to points of depth at most \(n\) in \(D_{\infty}\).
\end{prop}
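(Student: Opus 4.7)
The plan is to apply Macbeath's theorem to extract a generating set of $\Gamma$, then rewrite each generator as a word in $S_\infty \cup \mathcal{A}$. Since $B = B_{u^{cov}}$ is open and its $\Gamma$-translates cover the connected space $\mathbb{H}^2_{\C}$, Macbeath's theorem gives $\Gamma = \langle E(B) \rangle$, where $E(B) = \{\gamma \in \Gamma : B \cap \gamma B \neq \emptyset\}$. It therefore suffices to show that every $\gamma \in E(B)$ factors through $S_\infty$ and $\mathcal{A}$.

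Fix $\gamma \in E(B)$ and set $p = \gamma(\infty)$. If $p = \infty$ then $\gamma \in \Gamma_\infty = \langle S_\infty \rangle$ and we are done. Otherwise, by Lemma \ref{matrix}, $p$ is an $\mathcal{O}_d$-rational boundary point with well-defined depth $dep(p) \geq 1$. The key step is the bound $dep(p) \leq n$. Since $B$ is unbounded (horoball at $\infty$) while $\gamma B$ is a bounded topological ball touching $\partial_\infty \mathbb{H}^2_{\C}$ only at $p$, neither can contain the other; thus $B \cap \gamma B \neq \emptyset$ forces the boundary horospheres $H_{u^{cov}}$ and $\gamma H_{u^{cov}}$ to intersect transversally, and in particular $H_{u^{cov}} \cap \gamma H_{u^{cov}} \neq \emptyset$. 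Corollary \ref{covdepth} then yields $u^{cov} \leq 2/\sqrt{dep(p)}$, and combined with the defining inequality $u^{cov} > 2/\sqrt{n+1}$ of the covering depth, this forces $dep(p) \leq n$.

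With the depth bound in hand, I would translate $p$ into $D_\infty$ using the cusp stabilizer. Since $D_\infty$ is a fundamental domain for $\Gamma_\infty$ acting on $H_{u^{cov}} \simeq \partial_\infty \mathbb{H}^2_{\C} - \{\infty\}$, some $\gamma_\infty \in \Gamma_\infty$ sends $p$ into $D_\infty$. Because $\Gamma_\infty \subset \U(2,1)$ preserves the Hermitian form and hence all levels, $dep(\gamma_\infty p) = dep(p) \leq n$, so $\gamma_\infty p$ lies in the $\Gamma_\infty$-orbit of one of the chosen representatives $p_i$; after adjusting $\gamma_\infty$ if necessary, we may take $\gamma_\infty p = p_i$. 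With $A_i \in \mathcal{A}$ satisfying $A_i(\infty) = p_i$, the element $A_i^{-1} \gamma_\infty \gamma$ fixes $\infty$ and hence equals some $\gamma_\infty' \in \Gamma_\infty$. Rearranging gives $\gamma = \gamma_\infty^{-1} A_i \gamma_\infty'$, a word in $S_\infty \cup \mathcal{A}$, completing the proof that $\Gamma = \langle S_\infty \cup \mathcal{A} \rangle$.

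It remains to verify that $\mathcal{A}$ is genuinely finite: a primitive integral lift $(p_1, p_2, p_3)^T$ of an $\mathcal{O}_d$-rational boundary point satisfies $dep(p) = |p_3|^2$, so the bound $dep(p) \leq n$ constrains $p_3$ to a finite subset of $\mathcal{O}_d$, and compactness of $D_\infty$ together with discreteness of $\Gamma_\infty$-orbits then yields only finitely many orbit representatives. The main hurdle is translating the open-horoball intersection $B \cap \gamma B \neq \emptyset$ into a horosphere intersection that Corollary \ref{covdepth} can act on; everything else amounts to standard manipulations with the cusp stabilizer.
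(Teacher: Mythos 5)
Your proof is correct and follows essentially the same route as the paper: Macbeath's theorem gives $E(B)$, Corollary \ref{covdepth} bounds the depth of $\gamma(\infty)$ by the covering depth $n$, and the decomposition $\gamma = \gamma_\infty^{-1}A_i\gamma_\infty'$ via the fundamental domain $D_\infty$ reduces everything to $S_\infty \cup \mathcal{A}$. Your intermediate step converting $B \cap \gamma B \neq \emptyset$ into a horosphere intersection (so that Corollary \ref{covdepth} literally applies) is a detail the paper glosses over, and your connectedness argument for it is sound.
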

\begin{proof}
By Macbeath's theorem, \(\gamma\) is a generator for \(\Gamma\) if \(B \cap \gamma B \neq \emptyset\).  Corollary \ref{covdepth} tells us that \(B \cap \gamma B \neq \emptyset\) if and only if \(\gamma\infty\) is either \(\infty\), or an \(\mathcal{O}_d\)-rational point of depth at most \(n\).  The proof of Proposition \ref{covarg} tells us that we need only cover \(D_{\infty}\) by finitely many horoballs in order to obtain our covering of \(\mathbb{H}_{\C}^2\).  The bases of these horoballs give us a finite list, \(\{p_1,...,p_k\}\), of \(\mathcal{O}_d\)-rational points in \(D_{\infty}\).  Assume the first \(r\) of the points form a system of representatives for \(\{p_1,...,p_k\}\) under the action of \(\Gamma_{\infty}\).  Since \(\Gamma\) has a single cusp, we can find a matrix, \(A_i\), sending \(\infty\) to \(p_i\).  If \(A_i\) is such a matrix, then \(A_i\) sends \(B\) to the horoball based at \(p_i\) as well.  We also know that any \(\mathcal{O}_d\)-rational point of depth at most \(n\) is a \(\Gamma_{\infty}\)-translate of one of the points \(\{p_1,...,p_r\}\).  This implies:
\[E(B) = \{\gamma \in \Gamma : B \cap \gamma B \neq \emptyset \} = \{\gamma^{1}_{\infty}A_i\gamma^{2}_{\infty} : \gamma^{1}_{\infty}, \gamma^{2}_{\infty} \in \Gamma_{\infty}, i = 1,...,r\}\]
The previous statement accounts for multiplying \(A_i\) on the left by an element of \(\Gamma_{\infty}\), and the right-multiplication of \(A_i\) by elements of \(\Gamma_{\infty}\) comes from the fact that precomposing a matrix with an element of the cusp stabilizer does not change the image of \(\infty\) under the map \(A_i\).  Denoting \(\mathcal{A} = \{A_1,...,A_r\}\), we conclude \(\Gamma\) is generated by \(S_{\infty} \cup \mathcal{A}\).
\end{proof}

\subsection{Relations}
It is not straightforward to check if a triple intersection of horoballs is nonempty.  Instead, we settle for some necessary conditions for a triple intersection to occur.  Relaxing the criteria for a possible relation may generate additional relations, but these redundancies will later be eliminated.  Recall our generating set is \(E(B) = \{\gamma^{1}_{\infty}A_i\gamma^{2}_{\infty} : \gamma^{1}_{\infty}, \gamma^{2}_{\infty} \in \Gamma_{\infty}, i = 1,...,r\}\).  Macbeath's theorem tells us that \(\Gamma\) admits a relation, \(\gamma\cdot \gamma' = \gamma \gamma'\), whenever \(\gamma, \gamma' \in E(B)\) satisfy \(B \cap \gamma B \cap \gamma \gamma ' B \neq \emptyset\).  Suppose both \(\gamma\infty \neq \infty\) and \(\gamma\gamma'\infty \neq \infty\).  As elements of \(\Gamma_{\infty}\) stabilize \(B\), and left and right multiplication by elements of \(\Gamma_{\infty}\) permute the set \(E(B)\), we may assume
\(\gamma = A_a, \gamma' = \gamma^{1}_{\infty}A_b\gamma^{2}_{\infty}, \textrm{ and } \gamma\gamma' = \gamma^{3}_{\infty}A_c\gamma^{4}_{\infty}\) for some \(1 \leq a,b,c \leq r\) and \(\gamma^{i}_{\infty}\in \Gamma_{\infty}\), \(1 \leq i \leq 4\).  The corresponding relation \(\gamma\cdot\gamma' = \gamma\gamma'\) becomes \(A_a\gamma^{1}_{\infty}A_b\gamma^{2}_{\infty} = \gamma^{3}_{\infty}A_c\gamma^4_{\infty}\).  If \(\gamma\), \(\gamma'\), or \(\gamma\gamma'\) fix \(\infty\), we take the corresponding \(A\) matrix to be the identity.  When we evaluate both sides of the relation at \(\infty\) we get \(A_a\gamma^{1}_{\infty}p_b = \gamma^{3}_{\infty}p_c\), and we will use this equation to detect potential relations.  We obtain a relation, \(R_{a,b,c}\), by identifying the element \(A^{-1}_c(\gamma^{3}_{\infty})^{-1}A_a\gamma^{1}_{\infty}A_b = \gamma_{\infty}^{*} \in \Gamma_{\infty}\) as a word composed of elements of the generating set \(S_{\infty}\).  The equation  \(A_a\gamma^{1}_{\infty}p_b = \gamma^{3}_{\infty}p_c\) provides us some useful information regarding the Cygan distance of our points of interest that will aid in our search for possible relations.

\begin{lemma}\label{covdist}
 If \(A_a\gamma^{1}_{\infty}p_b = \gamma^{3}_{\infty}p_c\) for \(A_a \in \Gamma\), \(\gamma_{\infty}^1,\gamma_{\infty}^3 \in \Gamma_{\infty}\), and \(p_b,p_c \in D_{\infty}\), then
 \[d_{C}({A_{a}}^{-1}(\infty), \gamma_{\infty}^{1}p_{b}) = \left(\frac{4dep(p_{c})}{dep(p_{a})dep(p_{b})}\right)^{\frac{1}{4}}\]
 where \(p_a = A_a(\infty)\).
\end{lemma}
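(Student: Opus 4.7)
The plan is to establish a general identity relating the Cygan distance between two $\mathcal{O}_d$-rational boundary points to their level and depths, and then to transport the relevant quantities via the unitary invariance of $\langle\cdot,\cdot\rangle$.

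\textbf{Step 1: A key identity.} First I would prove that for any two $\mathcal{O}_d$-rational points $p,q \in \partial_\infty \mathbb{H}^2_\C \setminus \{\infty\}$,
\[
d_C(p,q)^4 \;=\; \frac{4 \cdot lev(p,q)}{dep(p)\, dep(q)}.
\]
To see this, let $P_0 = (p_1,p_2,p_3)^T$ be a primitive integral lift of $p$. Since $p\neq\infty$, the third coordinate $p_3$ is nonzero, and $P_0 = p_3\cdot\psi(z_p,v_p,0)$ for the horospherical coordinates of $p$. Using the Hermitian form $\langle Z,W\rangle = W^*JZ$, a direct calculation shows $\langle P_0,e_1\rangle = p_3$, so $dep(p) = |p_3|^2$; likewise $dep(q) = |q_3|^2$ for the primitive integral lift $Q_0$ of $q$. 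Then
\[
|\langle P_0,Q_0\rangle|^2 \;=\; |p_3|^2|q_3|^2\,|\langle \psi(z_p,v_p,0),\psi(z_q,v_q,0)\rangle|^2 \;=\; \tfrac{1}{4}\,dep(p)\,dep(q)\,d_C(p,q)^4,
\]
by formula (\ref{cymet}). Rearranging gives the identity.

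\textbf{Step 2: Depth is preserved by the relevant transformations.} Since $e_1 = (1,0,0)^T$ is a $\langle\cdot,\cdot\rangle$-isotropic primitive integral lift of $\infty$, Lemma \ref{matrix} implies that $A_a(e_1)$ and $A_a^{-1}(e_1)$ are primitive integral lifts of $p_a$ and $A_a^{-1}(\infty)$ respectively. The unitary invariance of $\langle\cdot,\cdot\rangle$ gives
\[
|\langle A_a^{-1}(e_1),e_1\rangle| \;=\; |\langle e_1,A_a(e_1)\rangle| \;=\; |\langle A_a(e_1),e_1\rangle|,
\]
so $dep(A_a^{-1}(\infty)) = dep(p_a)$. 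Similarly, each $\gamma_\infty^j \in \Gamma_\infty$ is unitary and fixes $\infty$, hence preserves all levels and in particular depths; so $dep(\gamma_\infty^1 p_b) = dep(p_b)$ and $dep(\gamma_\infty^3 p_c) = dep(p_c)$.

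\textbf{Step 3: Compute the relevant level.} Applying the unitary element $A_a$ to both arguments preserves the level, so
\[
lev(A_a^{-1}(\infty),\, \gamma_\infty^1 p_b) \;=\; lev(\infty,\, A_a\gamma_\infty^1 p_b) \;=\; lev(\infty,\, \gamma_\infty^3 p_c) \;=\; dep(p_c),
\]
using the hypothesis $A_a\gamma_\infty^1 p_b = \gamma_\infty^3 p_c$ and the $\Gamma_\infty$-invariance from Step 2.

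\textbf{Step 4: Combine.} Substituting the results of Steps 2 and 3 into the identity of Step 1 with $p = A_a^{-1}(\infty)$ and $q = \gamma_\infty^1 p_b$ yields
\[
d_C\bigl(A_a^{-1}(\infty),\,\gamma_\infty^1 p_b\bigr)^4 \;=\; \frac{4\,dep(p_c)}{dep(p_a)\,dep(p_b)},
\]
and taking fourth roots gives the claim.

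The only nontrivial step is Step 1, where one must correctly relate the standard lift (used in the Cygan metric) to the primitive integral lift (used in levels and depths) by the rescaling factor $p_3$; once this bookkeeping is done, the rest is a straightforward application of unitary invariance and the hypothesis $A_a\gamma_\infty^1 p_b = \gamma_\infty^3 p_c$.
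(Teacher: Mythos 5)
Your proposal is correct and follows essentially the same route as the paper's proof: both derive $lev(A_a^{-1}(\infty),\gamma_\infty^1 p_b)=dep(p_c)$ from unitary invariance of levels and then convert the level (computed with primitive integral lifts) into the Cygan distance (computed with standard lifts) using the depths as the rescaling factors. Your Step 1 merely isolates, and justifies in slightly more detail, the identity $lev(p,q)=dep(p)\,dep(q)\,|\langle\psi_p,\psi_q\rangle|^2$ that the paper asserts inline.
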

\begin{proof}
 Suppose \(A_a\gamma^{1}_{\infty}p_b = \gamma^{3}_{\infty}p_c\).  As \(A_{a}\gamma_{\infty}^{1}p_{b}\) and \(\gamma_{\infty}^{3}p_{c}\) represent the same point of
 \(\partial_{\infty} \mathbb{H}^{2}_{\C}\), and unitary matrices preserve depths and levels, we have
\[A_{a} \gamma_{\infty}^{1}p_{b} = \gamma_{\infty}^{3}p_{c} \implies
dep(A_{a} \gamma_{\infty}^{1}p_{b}) = dep(\gamma_{\infty}^{3}p_{c}) \implies
lev(\infty, A_{a} \gamma_{\infty}^{1}p_{b}) = dep(p_{c})\]
\[\implies lev({A_{a}}^{-1}(\infty), \gamma_{\infty}^{1}p_{b}) = dep(p_{c})
\]
From Definition \ref{level}, we can also write
\[lev({A_{a}}^{-1}(\infty), \gamma_{\infty}^{1}p_{b}) = |\langle P,Q \rangle | ^{2}\]
where \(P\) and \(Q\) are primitive integral lifts of the points \(A_a^{-1}(\infty)\) and \(\gamma^1_{\infty}p_b\) respectively.  We can write 
\[lev({A_{a}}^{-1}(\infty), \gamma_{\infty}^{1}p_{b}) = dep(A_a^{-1}(\infty))dep(\gamma_{\infty}^1p_b)|\langle \psi(A_a^{-1}(\infty), \psi(\gamma_{\infty}^1p_b) \rangle|^{2}\]
where \(\psi(\cdot)\) represents the standard lift of a point in the Siegel model (eq. \ref{stdlft}).  By the remarks earlier, we have \(dep(\gamma^1_{\infty}p_b) = dep(p_b)\), and since unitary matrices preserve depths and levels,
\[dep(p_a) = lev(\infty,p_a) = lev(\infty, A_a(\infty)) = lev(A_a^{-1}(\infty), \infty) = dep(A_a^{-1}(\infty))\]
The statements above imply
\begin{equation*}
 dep(p_c) = lev({A_{a}}^{-1}(\infty), \gamma_{\infty}^{1}p_{b}) = dep(p_a)dep(p_b)|\langle \psi(A_a^{-1}(\infty), \psi(\gamma_{\infty}^1p_b) \rangle|^{2}
 \end{equation*}
 The expression for the Cygan metric (eq. \ref{cymet}) and the equation above imply
 \[d_{C}({A_{a}}^{-1}(\infty), \gamma_{\infty}^{1}p_{b}) = \left(\frac{4dep(p_{c})}{dep(p_{a})dep(p_{b})}\right)^{\frac{1}{4}}\]

\end{proof}
Lemma \ref{covdist} and the following lemma will show that if we have a triple intersection of \(\Gamma\)-translates of \(B\), the corresponding parabolic fixed points cannot be too far apart in the Cygan distance, and their distances are restrained by the covering depth of \(\Gamma\).  These facts will allow us to reduce our search for relations to a finite number of verifications.  Note, \(p_0 = (0,0,0) \in \partial_{\infty}\mathbb{H}^2_{\C}\) is an \(\mathcal{O}_{d}\)-rational point of depth 1 for all values of \(d\).
\begin{lemma}\label{cybounds}
 Let \(n\) be the covering depth of \(\Gamma\).  If the elements \(\gamma = A_a\), \(\gamma'=\gamma^1_{\infty}A_b\gamma^2_{\infty}\), \(\gamma\gamma'=\gamma^3_{\infty}A_c\gamma^4_{\infty}\) of \(\Gamma\) satisfy \(B \cap \gamma B \cap \gamma \gamma ' B \neq \emptyset\), and we obtain the relation \(A^{-1}_c(\gamma^{3}_{\infty})^{-1}A_a\gamma^{1}_{\infty}A_b = \gamma_{\infty}^{*}\), then \(d_C(\gamma^1_{\infty}p_0,p_0)\), \(d_C(\gamma^3_{\infty}p_0,p_0)\), and \(d_C(\gamma^*_{\infty}p_0,p_0)\) are bounded as follows:
 
 \begin{enumerate}
\item If \(p_a,p_b,p_c \neq \infty\) \newline
\(d_{C}(\gamma_{\infty}^{1}p_{0},p_0) \leq \max\limits_{1\leq i \leq r}d_{C}(p_{0},{A_{i}}^{-1}(\infty)) + 
(4n)^{\frac{1}{4}} + \max\limits_{1\leq i \leq r}d_{C}(p_{i},p_{0})\) \newline
\(d_C(\gamma^3_{\infty}p_0,p_0) \leq (4n)^{\frac{1}{4}} + 2\max\limits_{1\leq i \leq r}d_{C}(p_{i},p_{0})\) \newline
\(d_C(\gamma^*_{\infty}p_0,p_0) \leq (4n)^{\frac{1}{4}} + 2\max\limits_{1\leq i \leq r}d_{C}(A_{i}^{-1}(\infty),p_{0})\)
 \item If \(p_a,p_b \neq \infty\), \(p_c = \infty\) \newline
 \(d_{C}(\gamma_{\infty}^{1}p_{0},p_0) \leq \max\limits_{1\leq i \leq r}d_{C}(p_{0},{A_{i}}^{-1}(\infty))+ \max\limits_{1\leq i \leq r}d_{C}(p_{i},p_{0})\)\newline
 \(d_C(\gamma^*_{\infty}p_0,p_0) \leq \max\limits_{1\leq i \leq r}d_{C}(p_{0},{A_{i}}^{-1}(\infty))+ \max\limits_{1\leq i \leq r}d_{C}(p_{i},p_{0})\)
 \item If \(p_b = \infty\), \(A_a = A_c\) \newline
 \(d_C(\gamma^1_{\infty}p_0,p_0) \leq 2\max\limits_{1\leq i \leq r}d_{C}(p_{i},p_{0})\) \newline
 \(d_C(\gamma^*_{\infty}p_0,p_0) \leq 2\max\limits_{1\leq i \leq r}d_{C}(A_{i}^{-1}(\infty),p_{0})\)
 \end{enumerate}
\end{lemma}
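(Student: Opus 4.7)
The plan is to bound each Cygan distance via a triangle inequality with two inserted intermediate points, using Lemma \ref{covdist} together with two analogous ``dual'' identities derived in the same way. Throughout I use that every element of $\Gamma_{\infty}$ preserves the Cygan metric on $\partial_{\infty}\mathbb{H}^{2}_{\C}\setminus\{\infty\}$ (because it fixes $\infty$), and that unitary maps preserve levels and depths.

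I first establish two companions to Lemma \ref{covdist}. Taking the level of $p_{a} = A_{a}(\infty)$ against the identity $A_{a}\gamma^{1}_{\infty}p_{b} = \gamma^{3}_{\infty}p_{c}$, and using that $\gamma^{1}_{\infty}$ fixes $\infty$, yields $lev(p_{a},\gamma^{3}_{\infty}p_{c}) = dep(p_{b})$ and hence
\[
 d_{C}(p_{a},\gamma^{3}_{\infty}p_{c}) = \left(\frac{4\,dep(p_{b})}{dep(p_{a})\,dep(p_{c})}\right)^{1/4}.
\]
Inverting the defining relation to $(\gamma^{*}_{\infty})^{-1}A_{c}^{-1}(\infty) = A_{b}^{-1}(\gamma^{1}_{\infty})^{-1}A_{a}^{-1}(\infty)$ and applying $A_{b}$ unitarily gives
\[
 d_{C}\left(A_{b}^{-1}(\infty),\,(\gamma^{*}_{\infty})^{-1}A_{c}^{-1}(\infty)\right) = \left(\frac{4\,dep(p_{a})}{dep(p_{b})\,dep(p_{c})}\right)^{1/4}.
\]
In Case 1, each of $dep(p_{a})$, $dep(p_{b})$, $dep(p_{c})$ lies in $\{1,\dots,n\}$ by Corollary \ref{covdepth}, so all three of these distances (Lemma \ref{covdist} and its two duals) are bounded above by $(4n)^{1/4}$.

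For Case 1, each of the three bounds follows from a single triangle inequality with two interpolating points. For $d_{C}(\gamma^{1}_{\infty}p_{0},p_{0})$ insert $\gamma^{1}_{\infty}p_{b}$ and $A_{a}^{-1}(\infty)$: the first leg reduces to $d_{C}(p_{0},p_{b})$ by Cygan-isometry of $\gamma^{1}_{\infty}$, the middle leg is Lemma \ref{covdist}, and the last leg is $d_{C}(A_{a}^{-1}(\infty),p_{0})$. For $d_{C}(\gamma^{3}_{\infty}p_{0},p_{0})$ insert $\gamma^{3}_{\infty}p_{c}$ and $p_{a}$: the first leg reduces to $d_{C}(p_{0},p_{c})$, the middle is the first dual identity, and the last is $d_{C}(p_{a},p_{0})$. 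For $d_{C}(\gamma^{*}_{\infty}p_{0},p_{0}) = d_{C}(p_{0},(\gamma^{*}_{\infty})^{-1}p_{0})$ insert $A_{b}^{-1}(\infty)$ and $(\gamma^{*}_{\infty})^{-1}A_{c}^{-1}(\infty)$: the middle is the second dual identity and the last leg reduces to $d_{C}(A_{c}^{-1}(\infty),p_{0})$ by Cygan-isometry of $(\gamma^{*}_{\infty})^{-1}$. Taking the maximum over $i$ of the flanking Cygan terms produces the three Case 1 estimates exactly as stated.

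Cases 2 and 3 are degenerate limits. In Case 2, $dep(p_{c}) = 0$ forces $A_{a}^{-1}(\infty) = \gamma^{1}_{\infty}p_{b}$ by Lemma \ref{covdist} (the distance is zero), and with the natural choice $A_{c} = I$, $\gamma^{3}_{\infty} = I$ when $p_{c} = \infty$ the middle $(4n)^{1/4}$-term collapses, leaving the two-term bounds stated. In Case 3, $p_{b} = \infty$ forces $A_{b} = I$, $p_{a} = p_{c}$, $A_{a} = A_{c}$, and $\gamma^{3}_{\infty}$ fixing $p_{a}$; the dual identities then make the middle leg of each triangle vanish, leaving only the two flanking Cygan terms. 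The main technical obstacle is deriving the two dual identities correctly, since one must apply the level formula through the matrices $A_{a}$, $A_{b}$ (which are \emph{not} Cygan isometries) and carefully rearrange the defining relation; once those identities are in place, each of the six bounds is an immediate triangle inequality.
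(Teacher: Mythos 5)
Your overall strategy is the paper's: Lemma \ref{covdist} applied to the evaluated relation, two companion identities obtained by rearranging that relation, and triangle inequalities with two inserted points. Case 1 is handled exactly as in the paper (your two ``dual'' identities are, up to applying the Cygan isometry \((\gamma^{*}_{\infty})^{-1}\), precisely the paper's derived equations \(A_{a}^{-1}\gamma^{3}_{\infty}p_{c}=\gamma^{1}_{\infty}p_{b}\) and \((\gamma^{3}_{\infty})^{-1}p_{a}=A_{c}\gamma^{*}_{\infty}A_{b}^{-1}(\infty)\)), and the resulting bounds are correct.

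The weak point is your treatment of Cases 2 and 3 as ``degenerate limits'' in which ``the dual identities make the middle leg of each triangle vanish.'' That mechanism only works when the vanishing depth sits in the \emph{numerator} of the relevant formula, as it does for the \(\gamma^{1}_{\infty}\) bound in Case 2 (where \(dep(p_{c})=0\) kills the middle term of Lemma \ref{covdist}). In Case 3 both of your Case-1 triangles, and in Case 2 the \(\gamma^{*}_{\infty}\) triangle, have the vanishing depth in the \emph{denominator}: with \(p_{b}=\infty\) the formula \(\bigl(4\,dep(p_{c})/(dep(p_{a})dep(p_{b}))\bigr)^{1/4}\) and your second dual \(\bigl(4\,dep(p_{a})/(dep(p_{b})dep(p_{c}))\bigr)^{1/4}\) are not zero but undefined, and the corresponding intermediate points \(\gamma^{1}_{\infty}p_{b}\) and \(A_{b}^{-1}(\infty)\) degenerate to \(\infty\), where the Cygan metric does not exist. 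So those triangles do not collapse; they break. The repair, which is what the paper actually does, is to read the needed identities directly off the rearranged relation --- \(p_{a}=\gamma^{*}_{\infty}A_{b}^{-1}(\infty)\) in Case 2, and \(\gamma^{1}_{\infty}p_{a}=p_{a}\), \(\gamma^{*}_{\infty}A_{a}^{-1}(\infty)=A_{a}^{-1}(\infty)\) in Case 3 --- and then run a triangle inequality through a \emph{single} intermediate point (\(p_{a}\), resp.\ \(A_{a}^{-1}(\infty)\)), using the \(\Gamma_{\infty}\)-invariance of \(d_{C}\) to turn the second leg into the same flanking term again; this is where the factor of \(2\) in Case 3 comes from. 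You do record the fixed-point fact in your sketch, so the missing ingredient is present, but as written the degenerate cases do not follow from the stated argument and need this reorganization.
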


\begin{proof}
 Suppose \(B \cap \gamma B \cap \gamma \gamma ' B \neq \emptyset\) and \(n \in \N\) is the covering depth of \(\Gamma\).  Recall, the triple intersection property gives us a relation of the form \(A^{-1}_c(\gamma^{3}_{\infty})^{-1}A_a\gamma^{1}_{\infty}A_b = \gamma^{*}_{\infty}\).
 \begin{case}[\(p_a,p_b,p_c \neq \infty\)]
 Evaluating both sides of \(A^{-1}_c(\gamma^{3}_{\infty})^{-1}A_a\gamma^{1}_{\infty}A_b = \gamma^{*}_{\infty}\) at \({\infty}\) yields \(A_{a} \gamma_{\infty}^{1}p_{b} = \gamma_{\infty}^{3}p_{c}\).  Lemma \ref{covdist} implies
\begin{equation}\label{eq.1}
d_{C}({A_{a}}^{-1}(\infty), \gamma_{\infty}^{1}p_{b}) = \left(\frac{4dep(p_{c})}{dep(p_{a})dep(p_{b})}\right)^{\frac{1}{4}}
\end{equation}
By the triangle inequality, (eq. \ref{eq.1}), and the fact that elements of \(\Gamma_{\infty}\) preserve the Cygan metric, we have
\begin{align*}
d_{C}(p_{0},\gamma_{\infty}^{1}p_{0}) \leq& d_{C}(p_{0},{A_{a}}^{-1}(\infty)) + 
d_{C}({A_{a}}^{-1}(\infty), \gamma_{\infty}^{1}p_{b}) + d_{C}(\gamma_{\infty}^{1}p_{b},\gamma_{\infty}^{1}p_{0}) \\
=& d_{C}(p_{0},{A_{a}}^{-1}(\infty)) + 
\left(\frac{4dep(p_{c})}{dep(p_{a})dep(p_{b})}\right)^{\frac{1}{4}} + d_{C}(p_{b},p_{0})
\end{align*}
As \( 1 \leq dep(p_a),dep(p_b),dep(p_c) \leq n\), and there are finitely many \(A_{i}\) and \(p_{i}\), we have
\[d_{C}(p_{0},\gamma_{\infty}^{1}p_{0}) \leq \max\limits_{1\leq i \leq r}d_{C}(p_{0},{A_{i}}^{-1}(\infty)) + 
(4n)^{\frac{1}{4}} + \max\limits_{1\leq i \leq r}d_{C}(p_{i},p_{0})\]
Using the relation \(A^{-1}_c(\gamma^{3}_{\infty})^{-1}A_a\gamma^{1}_{\infty}A_b = \gamma^{*}_{\infty}\), and performing an identical procedure with the derived equations \({A_{a}}^{-1}\gamma_{\infty}^{3}p_{c} = \gamma_{\infty}^{1}p_{b}\) and \((\gamma^3_{\infty})^{-1}p_a =A_c \gamma^*_{\infty}A_{b}^{-1}(\infty)\), we obtain
\[d_C(\gamma^3_{\infty}p_0,p_0) \leq (4n)^{\frac{1}{4}} + 2\max\limits_{1\leq i \leq r}d_{C}(p_{i},p_{0}) \textrm{ and } 
d_C(\gamma^*_{\infty}p_0,p_0) \leq (4n)^{\frac{1}{4}} + 2\max\limits_{1\leq i \leq r}d_{C}(A_{i}^{-1}(\infty),p_{0})\]
\end{case}
\begin{case}[\(p_a,p_b \neq \infty\), \(p_c = \infty\)]
In this case, our relation is \(A_a\gamma^1_{\infty}A_b = \gamma^*_{\infty}\).  Note, we need not produce a \(\gamma^3_{\infty}\) in this case.  As \(p_c = \infty\), we have \(dep(p_c) = 0\).  This fact, and the work form Case 1 implies\begin{align*}
d_{C}(p_{0},\gamma_{\infty}^{1}p_{0}) \leq&d_{C}(p_{0},{A_{a}}^{-1}(\infty)) + 
\left(\frac{4dep(p_{c})}{dep(p_{a})dep(p_{b})}\right)^{\frac{1}{4}} + d_{C}(p_{b},p_{0})\\
=&d_{C}(p_{0},{A_{a}}^{-1}(\infty)) + d_{C}(p_{b},p_{0})
\end{align*}
so that
\[d_{C}(p_{0},\gamma_{\infty}^{1}p_{0}) \leq \max\limits_{1\leq i \leq r}d_{C}(p_{0},{A_{i}}^{-1}(\infty))+ \max\limits_{1\leq i \leq r}d_{C}(p_{i},p_{0})\]
From our relation, we can derive \(A_{a}\gamma^1_{\infty} = \gamma^*_{\infty}A_b^{-1}\).  Evaluating at \(\infty\) yields \(p_a = \gamma^*_{\infty}A_b^{-1}(\infty)\).  This implies
\begin{align*}
d_C(\gamma^*_{\infty}p_0,p_0) \leq& d_C(\gamma^*_{\infty}p_0,p_a) + d_C(p_a,p_0) \\
=& d_C(\gamma^*_{\infty}p_0,\gamma^*_{\infty}A_b^{-1}(\infty)) + d_C(p_a,p_0) \\
=& d_C(p_0,A_b^{-1}(\infty)) + d_C(p_a,p_0)
\end{align*}
so that
\[d_C(\gamma^*_{\infty}p_0,p_0) \leq \max\limits_{1\leq i \leq r}d_{C}(p_{0},{A_{i}}^{-1}(\infty))+ \max\limits_{1\leq i \leq r}d_{C}(p_{i},p_{0})\]
\end{case}

\begin{case}[\(p_b = \infty\), \(A_a = A_c\)]
In this case, our relation can be written \(A_a^{-1}\gamma^1_{\infty}A_a = \gamma^*_{\infty}\).  Note, we need not produce a \(\gamma^3_{\infty}\) in this case either.  Evaluating at \(\infty\) gives \(\gamma^1_{\infty}p_a = p_a\).  We have
\begin{align*}
 d_C(\gamma^1_{\infty}p_0,p_0) \leq& d_C(\gamma^1_{\infty}p_0,p_a) + d_C(p_a,p_0) \\
 =& d_C(\gamma^1_{\infty}p_0,\gamma^1_{\infty}p_a) + d_C(p_a,p_0) \\
 =& 2d_C(p_a,p_0)
\end{align*}
so that
\[d_C(\gamma^1_{\infty}p_0,p_0) \leq 2\max\limits_{1\leq i \leq r}d_{C}(p_{i},p_{0})\]
From our relation, we can derive \(A_a\gamma^*_{\infty}A_a^{-1} = \gamma^1_{\infty}\), so that evaluating at infinity gives \(\gamma^*_{\infty}A_a^{-1}(\infty) = A_a^{-1}(\infty)\).  Following an identical procedure, replacing \(\gamma^1_{\infty}\) and \(p_a\) with \(\gamma^*_{\infty}\) and \(A_a^{-1}(\infty)\) respectively, we obtain
\[d_C(\gamma^*_{\infty}p_0,p_0) \leq 2\max\limits_{1\leq i \leq r}d_{C}(A_{i}^{-1}(\infty),p_{0})\]
\end{case}
\noindent The remaining possible combinations of \(p_a = \infty\), \(p_b=\infty \), and \(p_c =\infty\) reduce to the cases already considered via appropriate left/right multiplication of matrices.
\end{proof}   

\subsection{Outline of method in practice}
\noindent Before moving on to the presentations for the Euclidean Picard modular groups in the cases \(d=2,11\), we will give a brief outline of the steps in applying Macbeath's theorem.
\begin{enumerate}
 \item Obtain an affine fundamental domain, \(D_{\infty} \subset \partial_{\infty}\mathbb{H}^2_{\C} \simeq H_{u^{cov}}\) for the action of \(\Gamma_{\infty} = \textrm{Stab}_{\Gamma}(\infty)\).
 \item Obtain a presentation \(\Gamma_{\infty} = \langle S_{\infty}:R_{\infty} \rangle\).
 \item Determine the covering depth, \(n\), of \(\Gamma\).
 \item Find all \(\mathcal{O}_{d}\)-rational points of depth at most \(n\) in \(D_{\infty}\).
 \item Of the \(\mathcal{O}_{d}\)-rational points of depth at most \(n\) in \(D_{\infty}\), find a system of representatives of these points under the action of \(\Gamma_{\infty}\).  Denote this system of representatives \(\{p_1,...,p_r\}\).
 \item For each \(p_i\), \(1 \leq i \leq r\), obtain a matrix, \(A_i\), such that \(A_i(\infty) = p_i\).  Denote this set of matrices \(\mathcal{A}\).
 \item For every \(1 \leq a,b,c \leq r\), for which there exists \(\gamma^1_{\infty},\gamma^3_{\infty} \in \Gamma_{\infty}\), such that \(A_a\gamma^1_{\infty}p_b = \gamma^3_{\infty}p_c\), record the relation \(R_{a,b,c} = (\gamma^*_{\infty})^{-1}A_c^{-1}(\gamma^3_{\infty})^{-1}A_a\gamma^1_{\infty}A_b\) for some \(\gamma^*_{\infty} \in \Gamma_{\infty}\).  Denote this set relations \(\mathcal{R}\).
 \item Obtain the presentation \(\Gamma = \langle S_{\infty}\cup \mathcal{A}: R_{\infty} \cup \mathcal{R} \rangle\).
 \item Simplify the presentation obtained in Step 8.
\end{enumerate}

\noindent Now that we have summarized the method, we move on to the presentations for \(\PU(2,1;\mathcal{O}_{d})\), \(d=2,11\).

\section{A presentation for \(\PU(2,1;\mathcal{O}_{2})\)}
 \begin{thm}\label{d=2 pres}
  \(\Gamma{(2)} = \PU(2,1;\mathcal{O}_{2})\) admits the following presentation:
  \small
\begin{align*}
&\Gamma(2) = \langle T,I,A :I^2,A^8,(T^{-1}A^{-1})^4,(T^2ITITI)^2,T^{-1}A^{-1}IATA^{-1}IA,
IA^3IA^{-1}IA^{-1},\\
&IT^{-1}A^{-2}IAT^{-1}ITA^2ITA,TATITIATATIT^{-1}IA,(T^{-1}A^{-2}T^{-1}IA^{-2})^2,(T^{-1}I)^8,\\
&(A^{-1}IA^{-1}T^{-1}A^{-1}IT^{-1})^2,IA^2ITATIA^{-1}T^{-1}A^{-1}IT^{-1}A,A^{-2}IA^{-1}T^{-1}IA^{-1}TIT^{-1}AITI,\\
&T^{-1}A^{-1}T^{-1}A^{-1}IT^{-1}AITIA^{-1}TIATA,IA^{-2}T^{-1}ITATIT^{-1}AITA^{-1}IT^{-1},\\
&TATITIT^{-1}IT^{-1}A^{-1}T^{-1}A^2IAI,A^2IA^{-2}TIT^{-1}AIA^{-1}IATIT^{-1},\\
&T^{-1}A^2IA^{-1}ITIT^{-1}A^{-1}IATIA^{-2}IA^{-1},TA^{-1}IAT^{-1}IA^{-1}T^{-1}IA^{-2}IAIA^{-1}TIA^{-1},\\
&T^{-2}A^2ITATIT^2IT^{-1}A^{-1}T^{-1}IA^{-2},ATAIATATITAIA^{-2}T^{-1}IA^{-1}T,\\
&A^{-1}T^{-1}AIT^{-1}ITIAIA^{-1}TITA^{-1}IA^2IT^{-1},T^{-1}IA^{-2}ITIA^{-2}IAT^{-1}IT^{-1}ITITIA^{-1},\\
&ATIT^{-1}IA^2IA^2T^{-1}A^{-1}T^{-1}IA^{-2}TIAIA^{-1}T,TATITITIT^{-1}IT^{-1}IT^{-1}A^{-1}T^{-1}AIT^{-1}ITIA^{-1},\\
&ITIA^{-1}T^{-1}IA^{-1}TIATAT^{-1}IATAIT^{-2}A^{-1}T^{-1}IA^{-2}T^{-3},\\
&IT^{-1}IT^{-1}IT^{-1}AIA^{-1}TIA^{-2}IAT^{-1}IT^{-1}A^{-1}TIT^{-1}AIA^{-1}IT^{-1}A^{-1}IT,\\
&TATIT^2ITA^{-1}TIT^{-1}A^{-1}T^{-1}A^{-1}T^2IT^{-1}IT^{-1}IT^{-1}IA^{-1}IT^{-1}ITIAIA^{-1}TA^{-2},\\
&T^{-1}ITA^{-2}TAIT^{-1}IT^{-2}ITA^{-1}IA^2IT^{-1}ITAIT^{-1}ITITIAIA^{-1}TITA^{-1}T^{-1}IA^{-2}T^{-1}\rangle\\
\end{align*}

\normalsize
The unitary matrices corresponding to the generators of \(\Gamma(2)\) are given by:
\[ T = T_{2\sqrt{2}} = \begin{bmatrix}
                        1 & 0 & i\sqrt{2} \\
                        0 & 1 & 0 \\
                        0 & 0 & 1 \\
                       \end{bmatrix},
I = I_0 = \begin{bmatrix}
         0 & 0 & 1 \\
         0 & -1 & 0 \\
         1 & 0 & 0 \\
        \end{bmatrix},
A = A_{3,3} = \begin{bmatrix}
         -1+i\sqrt{2} & 2i\sqrt{2} & 2-i\sqrt{2} \\
         2-i\sqrt{2} & 1-2i\sqrt{2} & -2 \\
         1-i\sqrt{2} & -i\sqrt{2} & -1 \\
        \end{bmatrix}\]
 \end{thm}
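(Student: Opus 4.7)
The plan is to execute the nine-step outline of the method exactly as described, specializing to $d=2$ so that $\mathcal{O}_2 = \Z[i\sqrt{2}]$. First I would describe the cusp stabilizer $\Gamma_\infty = \operatorname{Stab}_\Gamma(\infty)$. Using the matrix forms in equations (\ref{trans}) and (\ref{rot}), $\Gamma_\infty$ is generated by Heisenberg translations $T_{(z,v)}$ with $(z,v)$ ranging over the Heisenberg lattice corresponding to $\mathcal{O}_2$, together with the finitely many Heisenberg rotations lying in $\PU(2,1;\mathcal{O}_2)$, and I would obtain a finite presentation $\Gamma_\infty = \langle S_\infty : R_\infty\rangle$ by standard arguments. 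From this presentation I read off an affine fundamental domain $D_\infty$ for the action of $\Gamma_\infty$ on the horosphere $H_{u^{cov}}$, which will be a bounded polytope in horospherical coordinates.

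Next I would compute the covering depth $n$ of $\Gamma(2)$. Following the strategy summarized in Section \ref{horcov}, I reduce to covering $D_\infty$ by extended Cygan balls centered at $\mathcal{O}_2$-rational boundary points; by Corollary \ref{covdepth} the allowable centers are those of depth at most $n$, and by Lemma \ref{convex} it suffices to decompose $D_\infty$ into a finite collection of affine cells whose vertices each lie inside a common extended Cygan ball of the required radius $(4/\mathrm{dep}(p))^{1/4}$. Producing this decomposition explicitly, aided by pictures on $\partial_\infty \mathbb{H}^2_{\C}$ (Figures 1--4 of the paper), will pin down the value of $n$ for $d=2$ and yield a finite list $\{p_1,\dots,p_r\}$ of $\Gamma_\infty$-orbit representatives of depth at most $n$. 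By Proposition \ref{gener} and Lemma \ref{matrix}, choosing any $A_i \in \PU(2,1;\mathcal{O}_2)$ with $A_i(\infty) = p_i$ (which exists because $\Gamma(2)$ has a single cusp by Zink's theorem) provides the generating set $S_\infty \cup \mathcal{A}$.

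To find the relations, I would invoke Lemma \ref{cybounds} to bound the Cygan norms of the elements $\gamma^1_\infty, \gamma^3_\infty, \gamma^*_\infty \in \Gamma_\infty$ that can possibly appear in a triple intersection equation $A_a \gamma^1_\infty p_b = \gamma^3_\infty p_c$. Combined with the covering depth and the fact that $D_\infty$ is compact, this reduces the search to a finite loop: for each triple $(a,b,c)$ with $1\le a,b,c\le r$ and each pair $(\gamma^1_\infty,\gamma^3_\infty)$ in the finite Cygan ball, I test the equality $A_a\gamma^1_\infty p_b = \gamma^3_\infty p_c$, and when it holds I rewrite $A_c^{-1}(\gamma^3_\infty)^{-1}A_a\gamma^1_\infty A_b$ as a word $\gamma^*_\infty$ in $S_\infty$, yielding the relation $R_{a,b,c}$. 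Together with $R_\infty$, this produces, for $d=2$, the raw presentation with $54$ generators and $5837$ relations promised in the introduction.

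The final step is simplification. I would feed the raw presentation into MAGMA and use Tietze transformations to eliminate redundant generators and relations, obtaining the explicit presentation with the three generators $T$, $I$, $A$ (whose matrix forms are displayed in the statement) and the $29$ relations listed. The main obstacle is without doubt the covering argument in steps $3$--$4$: actually producing a decomposition of $D_\infty$ by extended Cygan balls at $\mathcal{O}_2$-rational points and verifying it analytically is intricate, and the triple-intersection bookkeeping in step $7$ is computationally massive. Everything else is routine bookkeeping, linear algebra, or automated simplification.
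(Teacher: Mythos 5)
Your proposal follows the paper's own nine-step method essentially verbatim: take the cusp stabilizer presentation and affine fundamental domain (the paper cites these from Paupert--Will rather than rederiving them), establish the covering depth ($16$ for $d=2$) by decomposing $D_\infty(2)\times\{u\}$ into affine cells each contained in a single extended Cygan ball via Lemma \ref{convex}, enumerate the depth-at-most-$16$ rational points and matrices $A_i$, bound the cusp-stabilizer elements that can appear in a triple-intersection relation via Lemma \ref{cybounds}, and reduce with MAGMA from $54$ generators and $5{,}837$ relations to the stated $3$ and $29$. The only implementation detail you leave implicit that the paper makes explicit is the normal form $R^{p}T_{2\sqrt{2}}^{n}T_{2}^{m}T_{i\sqrt{2}}^{l}$ for $\Gamma_\infty(2)$ (Lemma \ref{nmfm2}) together with the explicit exponent bounds $|n|\leq 19$, $|m|\leq 3$, $|l|\leq 4$ (Lemma \ref{expbds2}), which is what converts your ``finite Cygan ball'' of cusp elements into a concrete finite enumeration.
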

 
 \begin{cor}\label{ab2}
 The abelianization of \(\Gamma(2)\) is \(\Z/2\Z \times \Z/4\Z\).
\end{cor}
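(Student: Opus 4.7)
The plan is to abelianize the presentation of $\Gamma(2)$ given in Theorem \ref{d=2 pres}. Since abelianization is right exact, $\Gamma(2)^{ab}$ is the cokernel of the $29 \times 3$ integer matrix $M$ whose $k$-th row records the exponent sums of the $k$-th relator in the generators $T$, $I$, $A$, viewed additively.

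First, I would write out a handful of key abelianized relators by hand. The obvious ones give $2I = 0$ (from $I^2$) and $8A = 0$ (from $A^8$). The relator $IA^3IA^{-1}IA^{-1}$ has exponent sum triple $(0,3,1)$, which, modulo $2I = 0$, forces $A = I$ in $\Gamma(2)^{ab}$. The relator $(T^{-1}A^{-2}T^{-1}IA^{-2})^2$ has exponent sum triple $(-4,2,-8)$, which, modulo $2I = 0$ and $8A = 0$, forces $4T = 0$. Thus $\Gamma(2)^{ab}$ is already a quotient of $\langle T, I \mid 4T, 2I \rangle \cong \Z/4 \oplus \Z/2$, an abelian group of order $8$.

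Second, I would verify the bound is sharp by exhibiting the homomorphism $\varphi \colon \Gamma(2) \to \Z/4 \oplus \Z/2$ sending $T \mapsto (1,0)$, $I \mapsto (0,1)$, $A \mapsto (0,1)$, and checking that each of the 29 relators is killed. Since the image of a word depends only on its exponent sum vector, this is a routine check carried out one relator at a time. Surjectivity is immediate from the images of $T$ and $I$. Combining the upper bound from the first step with the surjection $\varphi$ from the second yields $\Gamma(2)^{ab} \cong \Z/2 \oplus \Z/4$.

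The main obstacle is purely bookkeeping: several of the relators are long (the longest has more than thirty letters), so computing the 29 exponent sum triples must be done with care. The cleanest route is to feed the matrix $M$ to a computer algebra system such as MAGMA (as the author does elsewhere in the proof of Theorem \ref{d=2 pres}) and read off the invariant factors of its Smith normal form; this bypasses the need to isolate the four ``decisive'' relations above and confirms the result mechanically.
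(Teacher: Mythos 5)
Your proposal is correct and is essentially the paper's approach: the paper simply invokes MAGMA's \texttt{AbelianQuotient} on the presentation of Theorem \ref{d=2 pres}, which is exactly the Smith-normal-form computation on the abelianized relator matrix that you describe. Your hand-isolated relations ($2I=0$ from $I^2$, $A=I$ from $IA^3IA^{-1}IA^{-1}$, $4T=0$ from $(T^{-1}A^{-2}T^{-1}IA^{-2})^2$) check out and make the machine computation transparent, but the method is the same.
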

 
 \subsection{Finding \(D_{\infty}(2)\) and \(\Gamma_{\infty}(2)\)}
  \noindent More information on the cusp stabilizer, \(\Gamma_{\infty}(2)\), can be found in Section 5.3 of \cite{9}.
\begin{lemma}[\cite{9}]
1. The cusp stabilizer, \(\Gamma_{\infty}(2)\), admits the following presentation:
\[\Gamma_{\infty}(2) = \left\langle T_{2},T_{i\sqrt{2}}, T_{2\sqrt{2}},R:{T_{2\sqrt{2}}}^{4}[T_{2},T_{i\sqrt{2}}]\textrm{, } [T_{2},T_{2\sqrt{2}}],[T_{i\sqrt{2}},T_{2\sqrt{2}}], R^{2},[R,T_{2\sqrt{2}}],(RT_{2})^{2},(RT_{i\sqrt{2}})^{2} \right\rangle\]
2. Let \(D_{\infty}(2) \subset \partial_{\infty} \mathbb{H}^{2}_{\C}\) be the affine convex hull of the points with horospherical coordinates \((0,0),(2,0),(i\sqrt{2},0),\)
\((0,2\sqrt{2}),(2,2\sqrt{2}),(i\sqrt{2},2\sqrt{2})\).  Then \(D_{\infty}(2)\) is a fundamental domain for \(\Gamma_{\infty}(2)\) acting on \(\partial_{\infty} \mathbb{H}^{2}_{\C} - \{\infty\}\).
\\[1\baselineskip]
We use the following generators for \(\Gamma_{\infty}(2)\):

\[T_2 = \begin{bmatrix}        
         1 & -2 & -2 \\
         0 & 1 & 2 \\
         0 & 0 & 1 \\
        \end{bmatrix}\textrm{, }
        T_{i\sqrt{2}} =  \begin{bmatrix}        
         1 & i\sqrt{2} & -1 \\
         0 & 1 & i\sqrt{2} \\
         0 & 0 & 1 \\
        \end{bmatrix}\textrm{, }
        T_{2\sqrt{2}} = \begin{bmatrix}
                        1 & 0 & i\sqrt{2} \\
                        0 & 1 & 0 \\
                        0 & 0 & 1 \\
                       \end{bmatrix}\textrm{, }
                       R = \begin{bmatrix}
                            1 & 0 & 0 \\
                            0 & -1 & 0 \\
                            0 & 0 & 1 \\
                           \end{bmatrix}\]
where \(T_{z}\) is a Heisenberg translation matrix (eq. \ref{trans}) and \(R\) a Heisenberg rotation matrix (eq. \ref{rot}).
  \end{lemma}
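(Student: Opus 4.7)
The plan is to identify $\Gamma_{\infty}(2)$ explicitly and then verify both the presentation and the fundamental domain claim. First I would analyze the upper triangular elements of $\U(2,1;\mathcal{O}_2)$ that fix $\infty = \pi((1,0,0)^T)$. Each such element factors as a Heisenberg translation $T_{(z_1,v_1)}$ (eq. \ref{trans}) times a Heisenberg rotation $\operatorname{diag}(1, e^{i\theta}, 1)$ (eq. \ref{rot}) times a diagonal dilation $\operatorname{diag}(\lambda, 1, \bar\lambda^{-1})$. Integrality over $\mathcal{O}_2 = \Z[i\sqrt{2}]$ forces $\lambda, e^{i\theta} \in \mathcal{O}_2^{\times} = \{\pm 1\}$; after projectivizing, the only nontrivial contribution from these factors is $R = \operatorname{diag}(1, -1, 1)$. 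For the translation factor, writing $z_1 = a + bi\sqrt{2}$ with $a, b \in \Z$, the $(1,3)$-entry $(-(a^2+2b^2) + iv_1)/2$ lies in $\Z[i\sqrt{2}]$ iff $a$ is even and $v_1 \in 2\sqrt{2}\Z$. Hence the translation lattice is generated precisely by $T_2, T_{i\sqrt{2}}, T_{2\sqrt{2}}$, and $\Gamma_{\infty}(2) = \langle T_2, T_{i\sqrt{2}}, T_{2\sqrt{2}}, R\rangle$.

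Next I would verify the seven relations by direct computation in the Heisenberg group via eq. \ref{heis}. The key relation $T_{2\sqrt{2}}^4 [T_2, T_{i\sqrt{2}}] = 1$ comes from evaluating the commutator on $(0,0) \in \mathbf{H}$, which yields $(0, -8\sqrt{2}) = T_{2\sqrt{2}}^{-4}(0,0)$. The other commutator relations hold because $T_{2\sqrt{2}}$ has $z$-component zero, so it is central in the translation subgroup and commutes with $R$; $R^2 = 1$ is immediate; and the identity $R T_{(z,0)} R^{-1} = T_{(-z,0)}$ combined with $R^2 = 1$ yields $(RT_2)^2 = (RT_{i\sqrt{2}})^2 = 1$. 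To argue the relations are complete, I would put every word in the abstract group into a normal form $R^{\epsilon} T_2^a T_{i\sqrt{2}}^b T_{2\sqrt{2}}^c$ with $\epsilon \in \{0,1\}$ and $a,b,c \in \Z$ by using the relations to move $R$ to the left and collect all $T_{2\sqrt{2}}$'s on the right (picking up central commutator terms). Distinct normal forms act distinctly on $(0,0) \in \mathbf{H}$, so the presented group is isomorphic to $\Gamma_{\infty}(2)$.

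For the fundamental domain, the parallelepiped $P = \{(x + iy, v) : 0 \le x \le 2,\ 0 \le y \le \sqrt{2},\ 0 \le v \le 2\sqrt{2}\}$ is a fundamental domain for just the translation subgroup: any $(z, v)$ can be brought into $P$ by first translating $z$ into the parallelogram using $T_2, T_{i\sqrt{2}}$ (which also adjusts $v$ via the Heisenberg central term), and then translating $v$ into $[0, 2\sqrt{2}]$ using the central $T_{2\sqrt{2}}$; uniqueness on the interior is clear from the lattice structure. Adjoining $R$, the composition $T_2 T_{i\sqrt{2}} \circ R$ acts as $(x,y) \mapsto (2-x, \sqrt{2}-y)$ on the $z$-plane, exchanging the two triangular halves of the base parallelogram cut by the diagonal $x/2 + y/\sqrt{2} = 1$. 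Thus $D_{\infty}(2)$, the prism over the triangle with vertices $(0,0), (2,0), (0, \sqrt{2})$, is a fundamental domain for $\Gamma_{\infty}(2)$.

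The main obstacle will be the completeness argument for the presentation: because the Heisenberg commutator is nontrivial, the normal-form reduction must carefully track the central terms picked up when permuting translations past each other, and one must check that applying $R$-conjugation does not spoil the collection procedure. The geometric verification of the fundamental domain, by contrast, is a routine piecewise-affine computation once $P$ is in hand for the translation subgroup.
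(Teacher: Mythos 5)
The paper does not actually prove this lemma: it is imported wholesale from Section 5.3 of Paupert--Will \cite{9}, so your direct verification is doing work the paper delegates to a citation. Your overall route is the standard (and correct) one: decompose an upper-triangular element of \(\mathrm{U}(2,1;\mathcal{O}_2)\) fixing \(\infty\) as translation \(\times\) rotation \(\times\) dilation, observe that \(\mathcal{O}_2^{\times}=\{\pm1\}\) kills the dilation and leaves only \(R\) projectively, and read off the translation lattice from integrality of the \((1,3)\)-entry \(\tfrac{-|z_1|^2+iv_1}{2}\) (giving \(a\) even and \(v_1\in 2\sqrt{2}\,\Z\)). The relation checks and the fundamental-domain argument (prism over the parallelogram for the translation subgroup, then halving by the point reflection \(T_2T_{i\sqrt{2}}R:(x,y)\mapsto(2-x,\sqrt{2}-y)\)) are all sound, and in fact your normal form \(R^{\epsilon}T_2^aT_{i\sqrt{2}}^bT_{2\sqrt{2}}^c\) is exactly the normal form the paper later establishes abstractly in Lemma \ref{nmfm2}, so your argument dovetails with how the presentation is used downstream.

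There is one step that fails as written: the claim that ``distinct normal forms act distinctly on \((0,0)\in\mathbf{H}\).'' The rotation \(R\) acts by \((z,v)\mapsto(-z,v)\) and therefore fixes \((0,0)\), so this test cannot distinguish \(R^1T_2^0T_{i\sqrt{2}}^0T_{2\sqrt{2}}^0\) from the identity; worse, no single evaluation point suffices, since every element of the coset \(R\cdot(\text{translations})\) is a point reflection in the \(z\)-coordinate and fixes some point of \(\mathbf{H}\) (e.g.\ \(RT_2^{-1}\) fixes \((1,0)\)). The repair is immediate and you should state it: either compare the actual matrices (the normal form \(R^{\epsilon}T_2^aT_{i\sqrt{2}}^bT_{2\sqrt{2}}^c\) has \((2,2)\)-entry \((-1)^{\epsilon}\), which recovers \(\epsilon\), and then \(a,b,c\) are recovered from the translation part), or observe that the presented group and \(\Gamma_{\infty}(2)\) are both extensions of \(\Z/2\) by the rank-three translation lattice and the surjection respects the normal forms bijectively. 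With that substitution the completeness argument closes and the proof is complete.
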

\noindent The following lemma establishes a ``normal form'' for elements of \(\Gamma_{\infty}(2)\).  This normal form is crucial in our process for making our relation set finite.

\begin{lemma}\label{nmfm2}
 For any \(\gamma \in \Gamma_{\infty}(2)\), \(\gamma\) can be written as:
 \[ \gamma = R^{p}{T_{2\sqrt{2}}}^{n}{T_2}^{m}{T_{i\sqrt{2}}}^{l} \textrm{ for some } m,n,l \in \Z
 \textrm{ and } p = 0,1\]
\end{lemma}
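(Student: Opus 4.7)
The plan is to apply a bubble-sort style rewriting procedure that puts any word in the four generators into the displayed normal form, using only the defining relations of \(\Gamma_{\infty}(2)\).

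First I would extract the rewriting rules that the relations provide. From \(R^{2}=1\), \([R,T_{2\sqrt{2}}]=1\), and \((RT_{2})^{2}=(RT_{i\sqrt{2}})^{2}=1\), the letter \(R\) commutes with \(T_{2\sqrt{2}}^{\pm 1}\) and conjugates \(T_{2}^{\pm 1}\) and \(T_{i\sqrt{2}}^{\pm 1}\) to their inverses, so any \(R\) can be slid leftward past any other letter (inverting it where appropriate) and adjacent pairs of \(R\)'s collapse. From \([T_{2},T_{2\sqrt{2}}]=[T_{i\sqrt{2}},T_{2\sqrt{2}}]=1\), powers of \(T_{2\sqrt{2}}\) slide freely past powers of \(T_{2}\) and \(T_{i\sqrt{2}}\). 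Finally, the relation \(T_{2\sqrt{2}}^{4}[T_{2},T_{i\sqrt{2}}]=1\) rearranges to \([T_{2},T_{i\sqrt{2}}]=T_{2\sqrt{2}}^{-4}\), giving swap rules of the form \(T_{i\sqrt{2}}^{\epsilon}T_{2}^{\delta}=T_{2\sqrt{2}}^{\pm 4}T_{2}^{\delta}T_{i\sqrt{2}}^{\epsilon}\) for \(\epsilon,\delta\in\{\pm 1\}\); the central power \(T_{2\sqrt{2}}^{\pm 4}\) produced can be moved anywhere without obstruction.

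Given an arbitrary word \(\gamma\) in the generators, I would execute three passes. Pass 1 moves every occurrence of \(R\) to the leftmost position using the conjugation rules for \(R\), then collapses the resulting power of \(R\) modulo \(2\); the result is \(R^{p}W\) with \(p\in\{0,1\}\) and \(W\) a word only in \(T_{2}^{\pm 1},T_{i\sqrt{2}}^{\pm 1},T_{2\sqrt{2}}^{\pm 1}\). Pass 2 slides every \(T_{2\sqrt{2}}^{\pm 1}\) to the left end of \(W\) using the commutation relations, producing \(R^{p}T_{2\sqrt{2}}^{n_{0}}U\) with \(U\) a word only in \(T_{2}^{\pm 1}\) and \(T_{i\sqrt{2}}^{\pm 1}\). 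Pass 3 sorts \(U\) so that every \(T_{2}^{\pm 1}\) precedes every \(T_{i\sqrt{2}}^{\pm 1}\), by repeatedly applying the commutator swap rule to adjacent pairs of the wrong order; each swap emits a factor \(T_{2\sqrt{2}}^{\pm 4}\) which I would immediately push leftward to combine with \(T_{2\sqrt{2}}^{n_{0}}\). After \(U\) is sorted, the surviving \(T_{2}\) and \(T_{i\sqrt{2}}\) syllables can be merged by exponent addition.

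The only subtlety is termination of Pass 3, because the swap rule injects new \(T_{2\sqrt{2}}\) letters. However, the inversion count of \(U\) — the number of pairs of positions \((i,j)\) with \(i<j\), position \(i\) holding a \(T_{i\sqrt{2}}^{\pm 1}\) and position \(j\) holding a \(T_{2}^{\pm 1}\) — decreases by exactly one with each adjacent swap, while the newly emitted \(T_{2\sqrt{2}}^{\pm 4}\) letters do not contribute to this count and are swept out to the left. The procedure therefore terminates, and the final word has the required form \(R^{p}T_{2\sqrt{2}}^{n}T_{2}^{m}T_{i\sqrt{2}}^{l}\) with \(p\in\{0,1\}\) and \(m,n,l\in\Z\).
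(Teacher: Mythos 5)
Your proposal is correct and follows essentially the same route as the paper: extract the same four rewriting rules from the defining relations (sliding \(R\) left while inverting \(T_{2}\) and \(T_{i\sqrt{2}}\), centrality of \(T_{2\sqrt{2}}\), and the commutator swap emitting \(T_{2\sqrt{2}}^{\pm 4}\)) and then collect letters in the order \(R\), \(T_{2\sqrt{2}}\), \(T_{2}\), \(T_{i\sqrt{2}}\). Your explicit termination argument via the inversion count is a small refinement the paper leaves implicit, but the argument is otherwise the same.
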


\begin{proof}
One can easily verify that the relations of \(\Gamma_{\infty}(2)\) imply the following:
\begin{enumerate}
 \item \(T_{2\sqrt{2}}\) is central in  \(\Gamma_{\infty}(2)\).
 \item Commuting \(R\) with \(T_{2}\), \({T_{2}}^{-1}\), \(T_{i\sqrt{2}}\), or \({T_{i\sqrt{2}}}^{-1}\) changes \(T_{2}\) and \(T_{i\sqrt{2}}\) to its inverse and visa-versa.
 \item Commuting \({T_{2}}^{\pm 1}\) with \({T_{i\sqrt{2}}}^{\pm 1}\) introduces a factor of
 \({T_{2\sqrt{2}}}^{\pm 4}\) in the word decomposition of an element of \(\Gamma_{\infty}(2)\).
 \item \(R = R^{-1}\)
\end{enumerate}
Hence, given an element \(\gamma \in \Gamma_{\infty}(2)\), we can:
\begin{enumerate}
 \item Collect all \(R\) letters in the word decomposition of \(\gamma\) to the leftmost position and simplify.  Along the way, we either elminate a factors of \(R\), leave factors of \(T_{2\sqrt{2}}^{\pm 1}\) unchanged, or flip \(T_2^{\pm 1}\), \(T_{i\sqrt{2}}^{\pm 1}\) factors to their inverses.
 \item Collect all \(T_{2\sqrt{2}}^{\pm 1}\) letters to the second leftmost position in the word decomposition of \(\gamma\) using centrality of \(T_{2\sqrt{2}}\) and simplify.
 \item Collect all \(T_2^{\pm 1}\) letters in the third leftmost position in the word decomposition of \(\gamma\) and simplify.  Along the way, we potentially introduce factors of \(T_{2\sqrt{2}}^{\pm 4}\), we move these to the second leftmost position using centrality again.
 \item Simplify remaining collected letters of \(T_{i\sqrt{2}}^{\pm 1}\).
\end{enumerate}
The algorithm above gives us our desired normal form.
\end{proof}

\noindent Now that we have established a normal form for cusp stabilizer elements, we argue we need only check finitely many possible relations.
\begin{lemma}\label{expbds2}
 If \(\Gamma(2)\) admits the relation \(A^{-1}_c(\gamma^{3}_{\infty})^{-1}A_a\gamma^{1}_{\infty}A_b = \gamma^{*}_{\infty}\) for \(a,b,c \in \{1,...,r\}\) and \newline \(\gamma^1_{\infty}\),\(\gamma^3_{\infty}\),\(\gamma^{*}_{\infty} \in \Gamma_{\infty}(2)\), then, using the normal form as in Lemma \ref{nmfm2}, the exponents of \(\gamma^1_{\infty}\), \(\gamma^3_{\infty}\), and \(\gamma^{*}_{\infty}\) satisfy 
 \[|n|\leq 19 \textrm{, } |m|\leq 3 \textrm{, } |l|\leq 4\]
\end{lemma}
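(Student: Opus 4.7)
The plan is to translate the Cygan distance bounds of Lemma \ref{cybounds} into explicit coordinate bounds on the exponents $(n,m,l)$ of the normal form.

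First, I would compute the image $\gamma(p_0)$ for $p_0 = (0,0)$ and $\gamma = R^p T_{2\sqrt{2}}^n T_2^m T_{i\sqrt{2}}^l$. The three Heisenberg translations correspond in horospherical coordinates to translation by $(0,2\sqrt{2})$, $(2,0)$, and $(i\sqrt{2},0)$. Because each of these lies along a single real or imaginary direction, the imaginary cross-term in the group law (eq.\ \ref{heis}) vanishes when raising to a power, giving $T_{2\sqrt{2}}^n = T_{(0,2n\sqrt{2})}$, $T_2^m = T_{(2m,0)}$, and $T_{i\sqrt{2}}^l = T_{(l i\sqrt{2},0)}$. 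Iterating the group law from right to left and applying the final $R^p$ (which negates the $z$-coordinate) yields
\[
\gamma(p_0) \;=\; \bigl((-1)^p(2m + l i\sqrt{2}),\; 2\sqrt{2}(n - 2ml)\bigr).
\]
Since the second argument of the Cygan distance is the origin, the cross term in eq.\ \ref{cymet} vanishes and one obtains
\[
d_C(\gamma(p_0),\, p_0)^4 \;=\; (4m^2 + 2l^2)^2 \;+\; 8(n - 2ml)^2.
\]

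Next, I would apply Lemma \ref{cybounds} to each of $\gamma^{1}_{\infty}, \gamma^{3}_{\infty}, \gamma^{*}_{\infty}$ across the three cases. Using the explicit data of $\Gamma(2)$ that will be assembled in the following subsections---namely the covering depth $n$, the representatives $p_1,\ldots,p_r \subset D_{\infty}(2)$, and the preimages $A_i^{-1}(\infty)$---each of the bounds in Lemma \ref{cybounds} becomes a concrete numerical quantity, and I define $M$ to be the maximum over all of them, so that every $\gamma \in \{\gamma^{1}_{\infty},\gamma^{3}_{\infty},\gamma^{*}_{\infty}\}$ arising in a valid relation satisfies $d_C(\gamma(p_0), p_0) \leq M$.

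Finally, I would extract individual coordinate bounds from the single inequality $(4m^2 + 2l^2)^2 + 8(n - 2ml)^2 \leq M^4$. Dropping the second summand gives $4m^2 + 2l^2 \leq M^2$, hence $|m| \leq M/2$ and $|l| \leq M/\sqrt{2}$. For $|n|$, the triangle inequality gives $|n| \leq |n-2ml| + 2|ml|$; the AM-GM inequality applied to $4m^2 + 2l^2 \geq 4\sqrt{2}\,|ml|$ yields $2|ml| \leq (4m^2+2l^2)/(2\sqrt{2})$. Setting $a = 4m^2 + 2l^2$ and $b = n - 2ml$ so that $a^2 + 8b^2 \leq M^4$, a Cauchy-Schwarz estimate on the vector $(a/(2\sqrt{2}),\, b)$ then gives $|n| \leq a/(2\sqrt{2}) + |b| \leq M^2/2$. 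Substituting the numerical value of $M$ obtained from Lemma \ref{cybounds} for $\Gamma(2)$ produces the stated bounds $|n|\leq 19$, $|m|\leq 3$, $|l|\leq 4$.

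The main obstacle is the coupling between the three exponents inside the single quadratic inequality. Bounding the two summands of the Cygan distance independently would give the weaker estimate $|n| \leq M^2/\sqrt{2}$, which is too loose to be compatible with the simultaneous bounds $|m|\leq 3$, $|l|\leq 4$; one must exploit the AM-GM/Cauchy-Schwarz trade-off above to obtain the sharper bound $|n|\leq M^2/2$. The remaining work---collecting the concrete numerical data $(p_i,\,A_i^{-1}(\infty),\,u^{cov})$ that feeds into $M$---is a bookkeeping task carried out in the subsequent subsections.
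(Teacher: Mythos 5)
Your approach is the same as the paper's: compute $\gamma^j_\infty(p_0)$ in the normal form of Lemma \ref{nmfm2} via the Heisenberg group law, obtain $d_C(\gamma p_0,p_0)^4=(4m^2+2l^2)^2+8(n-2ml)^2$, bound this by Lemma \ref{cybounds} using the covering depth $16$ and the numerical data for the $p_i$ and $A_i^{-1}(\infty)$, and extract exponent bounds. Your coordinate computation and your bound $|2\langle\cdot,\cdot\rangle|$-arithmetic are correct, and your AM--GM/Cauchy--Schwarz extraction is a welcome explicit replacement for the paper's unexplained ``a simple calculation yields.'' One quantitative caveat: taking $M$ to be the maximum of all the bounds in Lemma \ref{cybounds} gives $M=2\sqrt2+2\max_i d_C(A_i^{-1}(\infty),p_0)\approx 6.365$ (the $\gamma^*_\infty$ bound in Case 1), and then $M^2/2\approx 20.26$, so your smooth estimate only yields $|n|\le 20$, not the stated $|n|\le 19$; the value $19$ is only recovered for $\gamma^1_\infty$ and $\gamma^3_\infty$, whose bounds are $\approx 6.302$ and $\approx 6.238$. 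To get $19$ for $\gamma^*_\infty$ one must exploit integrality --- note the Cauchy--Schwarz extremal case forces $l^2=2m^2$, impossible for nonzero integers --- and enumerate the finitely many integer triples satisfying $(4m^2+2l^2)^2+8(n-2ml)^2\le M^4$, which is evidently what the paper does. This is a minor numerical slip rather than a conceptual gap (a bound of $20$ would merely enlarge the finite search space for relations), but as written your argument does not quite deliver the stated constant for all three cusp-stabilizer elements.
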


\begin{proof}
Recall the horospherical coordinates of the depth 1 point, \(p_0 = (0,0)\).  Lemma \ref{cybounds} tells us

\[d_{C}(\gamma_{\infty}^{1}p_{0},p_0) \leq \max\limits_{1\leq i \leq r}d_{C}(p_{0},{A_{i}}^{-1}(\infty)) + 
(4n)^{\frac{1}{4}} + \max\limits_{1\leq i \leq r}d_{C}(p_{i},p_{0})\]
We will see later, that the covering depth of \(\Gamma(2)\) is at most \(16\) (Section \ref{covdep2}).  Using the \(\mathcal{O}_2\)-rational points listed in Appendix \ref{2_points}, the bounds from Lemma \ref{cybounds} for the \(d=2\) case satisfy
\[\left(\frac{4dep(p_{c})}{dep(p_{a})dep(p_{b})}\right)^{\frac{1}{4}} \leq 2\sqrt{2} \textrm{, } 
d_{C}(p_{b},p_{0}) < 1.7048 \textrm{, }
d_{C}(p_{0},{A_{a}}^{-1}(\infty)) < 1.7684\]
Recall that the Cygan metric is given by (eq. \ref{cymet})
\[d_{C}((z_1,v_1),(z_2,v_2)) = ||z_1 - z_2|^{4} + |v_1 - v_2 +2\textrm{Im}(z_1\cdot\bar{z_2})|^{2}|^{\frac{1}{4}}\]
Using Lemma \ref{nmfm2}, and the Heisenberg multiplication law (eq. \ref{heis}), we have
\[d_C(p_0,\gamma^1_{\infty} p_0) = d_C(p_0, R^{p}{T_{2\sqrt{2}}}^{n}{T_2}^{m}{T_{i\sqrt{2}}}^{l}p_0) = d_C(p_0, {T_{2\sqrt{2}}}^{n}{T_2}^{m}{T_{i\sqrt{2}}}^{l}p_0) = ||2m+li\sqrt{2}|^{4} + |(2n-4ml)\sqrt{2}|^2|^{\frac{1}{4}} \]
The equations above imply
\[||2m+li\sqrt{2}|^{4} + |(2n-4ml)\sqrt{2}|^2|^{\frac{1}{4}} \leq 2\sqrt{2} + 1.7048+1.7684\]
Since \(l,m,n\) are integers, there are only finitely many value combinations that satisfy the inequality.  A simple calculation yields
\[|n|\leq 19 \textrm{, } |m|\leq 3 \textrm{, } |l|\leq 4\]

\noindent The derivations for the remaining cusp stabilizer elements follow the same procedure above.
\end{proof}

\subsection{Covering depth of \(\Gamma(2)\)}\label{covdep2}
\noindent Let \(B((z,v),r)\) be the open extended Cygan ball centered at \(p = (z,v) \in \partial_{\infty}\mathbb{H}^2_{\C}\) with radius \(r\).  Recall that balls of depth \(n\) appear at height \(u(n) =\frac{2}{\sqrt{n}}\).

\begin{lemma}
 Let \(u = u(17) + \epsilon =0.4852\), and \(H_u\) be the horosphere of height \(u\) based at \(\infty\).  Then the prism, \(D_{\infty}(2) \times \{u\}\), is covered by the intersections with \(H_u\) of the following extended Cygan balls (eq. \ref{excymet}): 
 \\[1\baselineskip]
 \small
 Depth 1: \(B((0,0),\sqrt{2}), B((2,0),\sqrt{2}), B((i\sqrt{2},0),\sqrt{2}), B((0,2\sqrt{2}),\sqrt{2}),B((2,2\sqrt{2}),\sqrt{2}), B((i\sqrt{2},2\sqrt{2}),\sqrt{2})\)
 \\[1\baselineskip]
 Depth 3: \(B((\frac{2}{3} + \frac{1}{3}i\sqrt{2}),\frac{2}{3}\sqrt{2}),(\frac{4}{3})^{\frac{1}{4}}), B((\frac{4}{3} + \frac{1}{3}i\sqrt{2},2\sqrt{2}),(\frac{4}{3})^{\frac{1}{4}})\)
 \normalsize
\end{lemma}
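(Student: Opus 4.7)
The plan is to use the affine decomposition strategy outlined after Proposition \ref{covarg}: decompose the prism $D_\infty(2)\times\{u\}$ into a finite collection of closed affine cells such that, for each cell, all of its vertices lie in some single extended Cygan ball from the given list. By the affine convexity of extended Cygan balls (Lemma \ref{convex}), each cell is then contained in its assigned ball, and the union of the listed ball intersections with $H_u$ covers the whole prism.

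Concretely, I would work in the real horospherical coordinates $(x,y,v,u) = (\operatorname{Re}(z),\operatorname{Im}(z),v,u)$ with $u = 0.4852$ fixed. The prism $D_\infty(2)\times\{u\}$ is then the convex hull of the six stated vertices, sitting above the triangle with corners $(0,0),(2,0),(0,\sqrt{2})$ at $v=0$ and the corresponding three at $v=2\sqrt{2}$. I would augment this vertex set by the horospherical projections of the two depth~3 ball centers $\bigl(\tfrac{2}{3},\tfrac{\sqrt{2}}{3},\tfrac{2\sqrt{2}}{3},u\bigr)$ and $\bigl(\tfrac{4}{3},\tfrac{\sqrt{2}}{3},2\sqrt{2},u\bigr)$, together with auxiliary points on the faces where the depth~1 balls alone leave gaps, and triangulate the prism into tetrahedra using this enlarged vertex set. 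For each tetrahedron $P$, I would check directly via formula (\ref{excymet}) that every vertex $w = (x_w,y_w,v_w,u)$ of $P$ satisfies $d_{XC}(w,c) \le r$ for some center $c = (x_c,y_c,v_c,0)$ and radius $r$ from the list, where $r = \sqrt{2}$ for the six depth~1 balls and $r = (4/3)^{1/4}$ for the two depth~3 balls. This amounts to verifying the scalar inequality $\bigl((x_w-x_c)^2 + (y_w-y_c)^2 + u\bigr)^2 + \bigl(v_w - v_c + 2(y_w x_c - x_w y_c)\bigr)^2 \le r^4$.

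The main obstacle is producing the decomposition itself: identifying where the six depth~1 balls centered at the prism corners fail to cover and confirming that the two listed depth~3 balls precisely patch the resulting gaps. Geometrically, at this height $u \approx 0.4852$ the depth~1 balls cover the bulk of $D_\infty(2)\times\{u\}$ but leave two small tear-drop regions above the triangle interior, one around $v = 2\sqrt{2}/3$ and one near $v = 2\sqrt{2}$, which happen to be exactly the positions of the two listed depth~3 ball centers. The geometric picture of Figures~1--4 in the introduction is essentially a guide for choosing this decomposition. Once such a decomposition is exhibited, the verification reduces to a finite list of explicit numerical $d_{XC}$ inequalities that can be checked by direct computation (in practice, via a short script), after which Lemma \ref{convex} applied cell-by-cell completes the proof.
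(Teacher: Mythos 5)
Your proposal is correct and follows essentially the same route as the paper's proof: decompose $D_{\infty}(2)\times\{u\}$ into affine cells whose vertices are verified, via the explicit formula for $d_{XC}$, to lie in one of the eight listed extended Cygan balls, then invoke the affine convexity of extended Cygan balls (Lemma \ref{convex}) to cover each cell entirely. The paper implements exactly this with an explicit list of auxiliary points $q_1,\dots,q_{29}$ and eight convex-hull regions $R_1,\dots,R_8$, each assigned to one ball.
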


\begin{cor}
 The covering depth of \(\Gamma(2)\) is at most \(16\).
\end{cor}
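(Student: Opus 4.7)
The plan is to chain together the preceding lemma with the covering machinery of Section~\ref{horcov} and the definition of covering depth. The preceding lemma exhibits, for $u = u(17) + \epsilon = 0.4852$, an explicit cover of the prism $D_\infty(2) \times \{u\}$ by intersections with $H_u$ of extended Cygan balls centered at $\mathcal{O}_2$-rational boundary points of depth $1$ and $3$, whose radii $(4/\mathrm{dep}(p))^{1/4}$ are precisely those prescribed by Lemma~\ref{Cyganford}.

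First I would propagate this local cover by $\Gamma_\infty(2)$. Because $D_\infty(2)$ is a fundamental domain for $\Gamma_\infty(2)$ on $H_u \simeq \partial_\infty \mathbb{H}^2_\C - \{\infty\}$, and because the generators of $\Gamma_\infty(2)$ act as Cygan isometries permuting the $\mathcal{O}_2$-rational boundary points, the $\Gamma_\infty(2)$-translates of the prism covering yield a cover of all of $H_u$ by extended Cygan balls of the prescribed radii. By Lemma~\ref{intersect} and Lemma~\ref{Cyganford}, each such ball (intersected with $H_u$) is identified with $H_u \cap A_p(H_u)$ for a matrix $A_p \in \Gamma(2)$ with $A_p(\infty) = p$, so $H_u$ is covered by $\Gamma(2)$-translates of $B_u$. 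Proposition~\ref{covarg} then upgrades this to a cover of all of $\mathbb{H}^2_\C$ by $\Gamma(2)$-translates of $B_u$, giving $u^{cov} \geq u = u(17) + \epsilon > 2/\sqrt{17}$.

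Finally, Definition~\ref{coveringdepth} declares the covering depth $n$ to be the unique integer with $2/\sqrt{n+1} < u^{cov} \leq 2/\sqrt{n}$, so the inequality $u^{cov} > 2/\sqrt{17}$ directly forces $n+1 > 17$ is impossible and hence $n \leq 16$. I do not anticipate any genuine obstacle here: the hard quantitative step was already absorbed into the preceding lemma, and the role of this corollary is simply to translate that covering statement into the language of Definition~\ref{coveringdepth}. The one point to verify carefully is that the radii of the balls appearing in the preceding lemma indeed match the Ford-isometric-sphere radii from Lemma~\ref{Cyganford}, which is immediate for the listed depths $1$ and $3$.
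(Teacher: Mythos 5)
Your argument is correct and follows exactly the route the paper intends: the preceding lemma supplies the cover of $D_{\infty}(2)\times\{u\}$ at height $u=u(17)+\epsilon$, which propagates via $\Gamma_{\infty}(2)$, Lemma \ref{intersect}, and Proposition \ref{covarg} to a cover of $\mathbb{H}^2_{\C}$, forcing $u^{cov}>2/\sqrt{17}$ and hence $n\leq 16$ by Definition \ref{coveringdepth}. The paper leaves this chain implicit (treating the corollary as immediate from the lemma and the Section \ref{horcov} machinery), so your write-up simply makes the same reasoning explicit.
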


\noindent Although we only used Cygan balls of depth up to 3, it appears that we still need to pass to depth 16.  We generated pictures (Figure 1) of coverings of \(D_{\infty}(2)\) by Cygan balls.  For heights corresponding to depths \(n \leq 15\), it appeared that balls of depth at most \(n\) did not cover the prism.  The proof below makes rigorous the fact that passing to depth \(16\) is sufficient in covering \(\mathbb{H}_{\C}^2\).

\begin{proof}
Figure 1 below shows the covering of \(D_{\infty}(2)\) by the relevant extended Cygan balls.  We will decompose \(D_{\infty}(2)\) into affine polyhedra, each of which lies in a single extended Cygan ball.  Consider the following points of \(\partial_{\infty}\mathbb{H}^2_{\C}\) in horospherical coordinates:
  \\[1\baselineskip]
  \small
  \(c_{1,1} = (0,0), c_{1,2} = (0,2\sqrt{2}), c_{1,3} = (2,0), c_{1,4} = (2,2\sqrt{2}), c_{1,5} = (i\sqrt{2},0), c_{1,6} = (i\sqrt{2},2\sqrt{2}), q_1 = (1,0),\)
  \\[1\baselineskip]
  \(q_2 =(\frac{3}{2}+\frac{1}{4}i\sqrt{2},0), q_3 = (1+\frac{1}{2}i\sqrt{2},0),q_4 = (\frac{1}{2}+\frac{3}{4}i\sqrt{2},0), q_5 =(1.15 +\frac{1}{4}i\sqrt{2},0),q_6 = (\frac{1}{2}i\sqrt{2},0), \)
  \\[1\baselineskip]
  \(q_7 = (\frac{1}{4}i\sqrt{2},0), q_8 = (1,2\sqrt{2}),q_9 = (1+\frac{1}{8}i\sqrt{2},2\sqrt{2}), q_{10} = (\frac{3}{2}+\frac{1}{4}i\sqrt{2},2\sqrt{2}), q_{11} = (1+\frac{1}{2}i\sqrt{2},2\sqrt{2}),\)
  \\[1\baselineskip]
  \(q_{12} =(\frac{1}{2}+\frac{3}{4}i\sqrt{2},2\sqrt{2}),q_{13} = (\frac{1}{2}i\sqrt{2},2\sqrt{2}), q_{14} = (\frac{1}{2}+\frac{1}{2}i\sqrt{2},2\sqrt{2}),q_{15} = (\frac{4}{5}+\frac{2}{5}i\sqrt{2},2\sqrt{2}), q_{16} = (1, \frac{15}{11}\sqrt{2}),\)
  \\[1\baselineskip]
  \(q_{17} = (1.1, \sqrt{2}), q_{18} = (2,\sqrt{2}), q_{19} = (0.94,\frac{3}{4}\sqrt{2}), q_{20} = (0, \frac{2}{3}\sqrt{2}), q_{21} = (0,\frac{5}{8}\sqrt{2}),q_{22} = (\frac{1}{4}i\sqrt{2},\frac{2}{3}\sqrt{2}),\)
  \\[1\baselineskip]
  \(q_{23} = (\frac{1}{2}i\sqrt{2},\sqrt{2}), q_{24} = (i\sqrt{2},\sqrt{2}), q_{25} = (\frac{3}{4} + \frac{5}{8}i\sqrt{2},\frac{1}{2}\sqrt{2}), q_{26} = (1.15+0.425i\sqrt{2},\frac{14}{15}\sqrt{2}),\)
  \\[1\baselineskip]
  \(q_{27} = (\frac{3}{4} +\frac{4}{11}i\sqrt{2},\sqrt{2}), q_{28} = (1+\frac{1}{4}i\sqrt{2},\frac{3}{2}\sqrt{2}), q_{29} = (1.07+\frac{2}{35}i\sqrt{2}, \frac{3}{4}\sqrt{2})\)
  \normalsize
  \\[1\baselineskip]
  Denoting \(C(S)\) the affine convex hull of a subset \(S \subset H_u \simeq \partial_{\infty}\mathbb{H}^2_{\C} \times \{u\}\), each of the following pieces of \(D_{\infty}(2) \times \{u\}\) are contained in the corresponding open extended Cygan ball:
  \\[1\baselineskip]
  \small
  \(R_1 = C(c_{1,1},q_1,q_5,q_3,q_7,q_{19},q_{21},q_{29}) \subset B((0,0),\sqrt{2})\) \newline
  \(R_2=C(c_{1,2},q_8,q_9,q_{15},q_{14},q_{13},q_{20},q_{16},q_{22},q_{23},q_{27},q_{28}) \subset B((0,2\sqrt{2}),\sqrt{2})\) \newline
  \(R_3 = C(c_{1,3},q_1,q_5,q_2,q_{19},q_{17},q_{18},q_{29}) \subset
  B((2,0),\sqrt{2})\) \newline
  \(R_4 = C(c_{1,4},q_2,q_3,q_5,q_8,q_9,q_{10},q_{18},q_{17},q_{16},q_{26},q_{28},q_{29}) \subset B((2,2\sqrt{2}),\sqrt{2})\) \newline
  \(R_5 = C(c_{1,5},q_4,q_6,q_{11},q_{12},q_{14},q_{15},q_{22},q_{23},q_{24},q_{25},q_{27}) \subset B((i\sqrt{2},0),\sqrt{2})\) \newline
  \(R_6 = C(c_{1,6},q_{12},q_{13},q_{14},q_{24},q_{23}) \subset B((2,2\sqrt{2}),\sqrt{2})\)\newline
  \(R_7 = C(q_3,q_4,q_6,q_7,q_{16},q_{17},q_{19},q_{20},q_{21},q_{22},q_{25},q_{26},q_{28},q_{27},q_{29}) \subset B((\frac{2}{3} + \frac{1}{3}i\sqrt{2}),\frac{2}{3}\sqrt{2})\) \newline
  \(R_8 = C(q_9,q_{10},q_{11},q_{15},q_{25},q_{26},q_{27},q_{28}) \subset B((\frac{4}{3} + \frac{1}{3}i\sqrt{2},2\sqrt{2}),(\frac{4}{3})^{\frac{1}{4}})\)
  \normalsize
  
   \begin{figure}[h]\label{fig1}
 \caption{Covering of \(D_{\infty}(2) \times \{u\}\) with Cygan balls up to depth 16}
 \includegraphics[scale = 0.2]{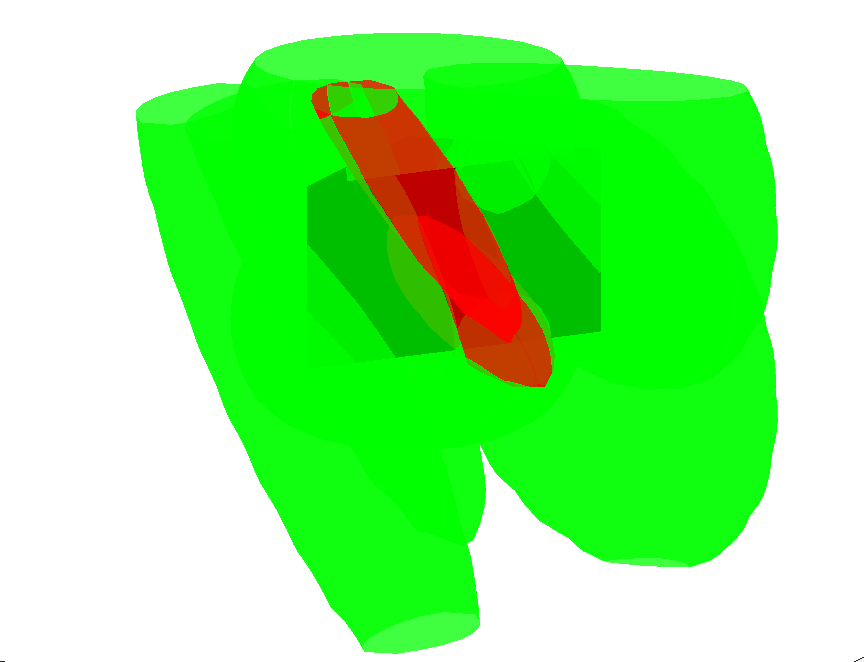}
 \centering
\end{figure}
  \par To verify the set containments above, we check each \(q\) in each region, \(R_i\), has an extended Cygan distance less than the radius of the corresponding extended Cygan ball.  Using the fact that extended Cygan balls are affinely convex, we can then conclude the entire convex hull is contained in \(R_i\).  We will do the calcuations for \(R_1\) here. The calculations for the remaining regions follow the same process.  We have:
  \\[1\baselineskip]
  \(d_C((0,0),c_{1,1}) < 0.6967 < \sqrt{2}\), \(d_C((0,0),q_1) < 1.2188 < \sqrt{2}\), \(d_C((0,0),q_5) < 1.3903 < \sqrt{2}\), \newline
  \(d_C((0,0),q_3) < 1.4091 < \sqrt{2}\),  \(d_C((0,0),q_7) < 0.7813 < \sqrt{2}\),
  \(d_C((0,0),q_{19}) < 1.3160 < \sqrt{2}\), \newline
  \(d_C((0,0),q_{21}) < 1.0042 < \sqrt{2}\),
  \(d_C((0,0),q_{29}) < 1.3966 < \sqrt{2}\)
  \\[1\baselineskip]
  The result then follows as each face of the regions, \(R_i\), are either on the boundary of the prism, or entirely contained in neighboring regions (see Figure 2).  Note, there is some overlap between some of the regions, but the entirety of the prism is still covered.
 \end{proof}

 \begin{figure}[h]\label{fig2}
 \caption{An affine decomposition of \(D_{\infty}(2) \times \{u\}\)}
 \includegraphics[scale = 0.23]{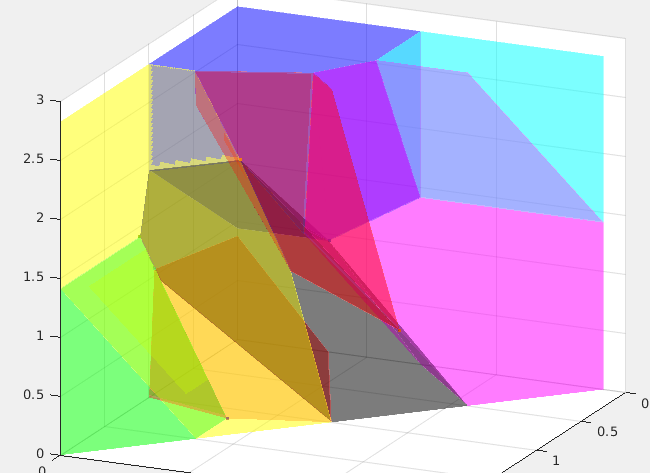}
 \centering
\end{figure}

\subsection{\(\mathcal{O}_2\)-rational points of depth at most \(n=16\)}

We aim to find all the points of depth at most \(16\) in \(D_{\infty}(2)\).  Recall that the depth of an \(\mathcal{O}_2\)-rational point, \(p\), is defined as the level between \(p\) and \(\infty = \pi((1,0,0)^{T})\).  We can take \( P_{\infty}=  (1,0,0)^{T}\) as a primitive integral lift for \(\infty\).  The standard lift an \(\mathcal{O}_2\)-rational point on the boundary is given by:

\[ P = \begin{bmatrix}
    \frac{-|z|^2+iv}{2} \\
    z \\
    1 \\
   \end{bmatrix}\]
where \(z\) and \(v\) come from the horospherical coordinates of the point \(\pi(\psi(z,v,0))\).  The standard lift may not be integral, but we can multiply by some \(q \in \mathcal{O}_2\) so that \(qP\) is then a primitive integral lift.  To calulate the level of \(p\), we compute
\[ |\langle P_{\infty}, qP \rangle|^{2} = |(qP)^{*}JP_{\infty}|^2 = |q|^2 \]  In order to find the \(\mathcal{O}_2\)-rational points of depth \(k\) for \(1 \leq k \leq 16\), we need to find \(q \in \mathcal{O}_2\) such that \(|q|^2 = k\) for \(1 \leq k \leq 16\).  We also need only find values of \(q\) up to multiplication by a unit.  Each value of \(q\) is of the form \(q = a + bi\sqrt{2}\) for \(a,b \in \Z\).  \(|q|^2 = a^2 +2b^2\), so we need to find values \(a,b \in \Z\) such that \(a^2+2b^2 = k\) for \(1 \leq k \leq 16\).  After some simple calculations, we have the following:
\small
\begin{center}
 \begin{tabular}{|c|c|}
  \hline
  \(\textrm{Depth}\) & Possible \(q\)'s \\
  \hline
  1 & 1 \\
  \hline
  2 & \(i\sqrt{2}\) \\
  \hline
  3 & \(1+i\sqrt{2},1-i\sqrt{2}\) \\
  \hline
  4 & 2 \\
  \hline
  6 & \(2+i\sqrt{2}, 2 - i\sqrt{2}\) \\
  \hline
  8 & \(2i\sqrt{2}\)\\
  \hline
  9 & \(3,1+2i\sqrt{2},1-2i\sqrt{2}\) \\
  \hline
  11 & \(3+i\sqrt{2},3-i\sqrt{2}\) \\
  \hline
  12 & \(2+2i\sqrt{2}, 2-2i\sqrt{2}\)\\
  \hline
  16 & 4\\
  \hline
 \end{tabular}
\end{center}
 \normalsize
 Next, we determine which horospherical points, \((z,v)\) have standard lifts \(P\), such that \(qP\) is a primitive integral lift.  In order to accomplish this task, we find all \((z,v) \in D_{\infty}(2)\) such that for a fixed \(q\), \(qz \in \mathcal{O}_2\) and \(qP_1 \in \mathcal{O}_2\) where \(P_1\) is the first entry of the standard lift of \(P\).  We only list depths that contain \(\mathcal{O}_2\)-rational points.  Moreover, we need only consider one representative from each \(\Gamma_{\infty}(2)\)-orbit. We list the set of \(\Gamma_{\infty}(2)\)-orbit representatives up to depth 16 in Appendix \ref{2_points}.
 \\[1\baselineskip]
Now for each representative, \(p\), we need to find a matrix \(A_p \in \Gamma(2)\) such that
\(A_p(\infty) = p\).

\subsection{Matrices and primitive integral lifts for \(\mathcal{O}_{2}\)-rational representatives}
 For each \(\mathcal{O}_2\)-rational point \(p\), there exists a \(q \in \mathcal{O}_2\) such that when we scale the standard lift of \(p\) by \(q\), we get a primitive integral lift of \(p\).  This practice takes care of finding primitive integral lifts for all of our \(\mathcal{O}_2\)-rational points of interest.  We look for a matrix that sends \(\infty = \pi(1,0,0)^{T}\) to \(p\).  In other words, we need to find a unitary matrix that has the primitive integral lift of \(p\) as its first column.  (First column is image of basis vector \((1,0,0)^{T}\))  There does not seem to be a general procedure for finding these matrices sending \(\infty\) to our points of interest.  A couple ``tricks'' one can use are:
 \\[1\baselineskip]
1. Use stabilizers of the vertical complex line in the Heisenberg group.  We use this trick to find matrices sending \(\infty\) to points with horospherical coordinate \(z=0\). \newline
2. Hit all relevant integral points by group elements we have already found, and see if we land in the \(\Gamma_{\infty}(2)\)-orbit of the point we are trying to reach.\newline
3. From a previously found matrix, use elementary row/column operations, transposition, complex conjugation, inversion, to see if we can get a matrix whose first column is the primitive integral lift of interest.
\\[1\baselineskip]
We denote \(p_{i,j}\) the \(j\)th \(\mathcal{O}_2\)-rational point of depth \(i\), and \(A_{i,j}\) a matrix sending \(\infty\)  We denote \(p_0=(0,0)\) and \(I_0\) a matrix sending \(\infty\) to \((0,0)\).  We list the matrices sending \(\infty\) to \(p_{i,j}\) in Appendix \ref{2_matrices}, and we only list matrices, as the primitive integral lifts corresponding to \(p_{i,j}\) are simply the first column of \(A_{i,j}\).
\\[1\baselineskip]
By Proposition \ref{gener}, we have that \(\Gamma(2)\) is generated by:
\[S(2) = \{T_2,T_{i\sqrt{2}},T_{2\sqrt{2}},R\} \cup \mathcal{A}\]
where \(T_2,T_{i\sqrt{2}},T_{2\sqrt{2}},R\) are the generators of \(\Gamma_{\infty}(2)\) and \(\mathcal{A}\) is the set of matrices listed in Appendix \ref{2_matrices}.  Using Lemma \ref{expbds2}, and the aid of MATLAB for computations \cite{10}, we can cycle through all possible elements of \(\Gamma_{\infty}(2)\) that could appear in a relation for \(\Gamma(2)\).  Using the MAGMA function ``Simplify(\(\cdot\))'' \cite{7} for simplifying the set of generators and the set of relations, we obtain the presentation in Theorem 2.  The MAGMA computation reduces our original generating set of 54 matrices to a generating set of 3 matrices and our original set of 5,837 relations to a set of 29 relations.  The abelianization in Corollary \ref{ab2} of \(\Gamma(2)\) is obtained using the MAGMA function ``AbelianQuotient(\(\cdot\))'' \cite{7}.

\section{A presentation for \(\PU(2,1;\mathcal{O}_{11})\)}
\noindent For the entirety of this section, we denote \(\tau = \frac{1+i\sqrt{11}}{2}\).  We omit some details in this case, as the proofs follow identical procedures to that of \(d=2\).
\begin{thm}\label{d=11 pres}
  \(\Gamma(11) = \PU(2,1;\mathcal{O}_{11})\) admits the following presentation:
  \small
  \begin{align*}
 &\Gamma(11) = \langle R,T_{1},T_{v},I,A :R^2,I^2,A^4,[R,T_{v}],[T_{1},T_{v}],(IR)^2,RT_{v}T_{1}^{-1}RT_{1}^{-1},T_{1}^{-1}IT_{1}IA^2I,\\
&IT_{1}^{-1}AT_{1}^{-1}AIT_{v}^{-1},(RAIA^{-1})^2,A^{-1}T_{1}IT_{1}^{-1}IT_{1}^{-1}A^{-1}IT_{1}^{-1},A^{-1}T_{v}RT_{1}^{-1}IT_{v}IRA^{-1}T_{1},\\
&A^{-1}RT_{v}IA^{-2}RT_{1}IAT_{1},A^{-1}T_{v}RIA^{-2}RT_{1}IAT_{1},(T_{1}IT_{1}^{-1}A^{-1}RA)^2,(T_{v}^{-1}AIT_{1}IT_{1}^{-1}A^{-1}I)^2,\\
&IRT_{v}^{-1}IT_{v}^{-1}T_{1}RAT_{1}IT_{1}^{-1}AT_{1}IAT_{1}IT_{1}^{-1},IT_{1}^{-1}A^{-1}IRT_{v}^{-1}T_{1}IT_{v}RT_{1}^3IT_{1}^{-1}AIRT_{1}^{-2}R,\\
&T_{v}A^{-1}T_{1}IT_{1}^{-1}A^{-1}T_{1}IT_{v}IA^{-2}RAIA^{-1}T_{1}IT_{1}^{-1}RT_{1}IT_{1}^{-2},\\
&AT_{1}IT_{1}^{-1}A^2T_{1}AT_{1}IT_{1}AT_{1}IA^{-1}T_{v}RT_{1}^{-1}IAIA^{-1}RIT_{1}^{-1}AT_{1}IT_{1}^{-1}A,\\
&IA^{-2}T_{1}AT_{1}IT_{1}AT_{1}IT_{1}^{-1}A^{-1}RA^{-1}T_{1}IT_{1}^{-1}A^{-1}IT_{v}^{-1}T_{1}AIA^{-1}T_{1}IT_{1}^{-1}RA,\\
&AT_{1}IT_{1}^{-1}A^{-2}T_{1}IT_{1}^{-1}A^2T_{1}IT_{1}^{-1}A^2T_{1}IT_{1}^{-1}A^2T_{1}IT_{1}^{-1}A^2T_{1}IT_{1}^{-1}A,\\
&AIA^{-1}RT_{v}IA^{-1}T_{1}AIRT_{1}^{-1}A^{-1}T_{1}AT_{1}IT_{v}RIT_{1}^{-1}T_{v}IAT_{1}^{-1}ARA^{-2}IT_{v}^{-1},\\
&AT_{1}^2IT_{1}^{-1}A^{-2}T_{1}IT_{1}^{-1}A^{-1}RAT_{1}IA^{-2}T_{1}IT_{1}^{-1}A^{-2}T_{1}IT_{1}^{-1}A^{-2}T_{1}^{-1}A^{-1}RT_{1}I,\\
&A^{-2}T_{1}IT_{1}^{-1}A^2T_{1}IT_{1}^{-1}A^{-1}T_{1}RIAT_{1}IT_{1}^{-1}A^2T_{1}IT_{1}^{-1}A^{-1}IT_{v}^{-1}T_{1}RAT_{1}IT_{1}^{-1},\\
&T_{1}^{-1}IAT_{1}IT_{1}^{-1}AT_{v}RIA^{-1}T_{1}IA^{-1}RIT_{1}^{-1}AIT_{1}^{-1}A^{-1}IT_{1}^{-1}AT_{1}IT_{1}^{-1}ARAT_{1}IA^{-1}T_{v}R\rangle\\
\end{align*}

\normalsize
The unitary matrices corresponding to the generators of \(\Gamma(11)\) are given by:
\[ R = \begin{bmatrix}
                        1 & 0 & 0 \\
                        0 & -1 & 0 \\
                        0 & 0 & 1 \\
                       \end{bmatrix},
T_1 = \begin{bmatrix}
         1 & -1 & -1 + \tau \\
         0 & 1 & 1 \\
         0 & 0 & 1 \\
        \end{bmatrix},
     T_v = \begin{bmatrix}
         1 & 0 & -1 + 2\tau \\
         0 & 1 & 0 \\
         0 & 0 & 1 \\
        \end{bmatrix},   
I=I_0 = \begin{bmatrix}
         0 & 0 & 1 \\
         0 & -1 & 0 \\
         1 & 0 & 0 \\
        \end{bmatrix},\]
\[       A = A_{5,1} = \begin{bmatrix}
        -1 - \tau & -2 & -1 + \tau \\
        -1 & -1 + \tau & 1 \\
        -2+\tau & \tau & 1 \\
        \end{bmatrix}
\]

\end{thm}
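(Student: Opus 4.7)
\medskip
\noindent\textbf{Proof proposal.} The plan is to execute the nine-step outline of Section 4.4 in the case $d=11$, mirroring exactly what was done for $d=2$ in the previous section. The first task is to record (from the literature, e.g.\ Francsics--Lax and \cite{9}) a presentation of the cusp stabilizer $\Gamma_{\infty}(11)$, together with a compact fundamental domain $D_{\infty}(11) \subset H_{u^{cov}} \simeq \partial_{\infty}\mathbb{H}^{2}_{\C} - \{\infty\}$ for its action. Since $\mathcal{O}_{11} = \Z[\tau]$ with $\tau = \tfrac{1+i\sqrt{11}}{2}$, a natural generating set consists of the two independent horizontal Heisenberg translations $T_{1}$ and the ``vertical'' translation $T_{v}$ (with translation part $-1+2\tau$), together with the rotation $R$ by $\pi$. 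The prism $D_{\infty}(11)$ will then be the product of a fundamental parallelogram for the lattice generated by $T_{1}, T_{\tau}$ (halved by $R$) with a vertical interval of $\R$-length governed by $T_{v}$.

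Next, I would establish a normal form for elements of $\Gamma_{\infty}(11)$ (analogous to Lemma \ref{nmfm2}) by verifying that $T_{v}$ is central, that $R$ inverts the horizontal translations, and that commuting two horizontal translations produces a controlled power of $T_{v}$. This yields a canonical expression $\gamma = R^{p} T_{v}^{n} T_{1}^{m} T_{\tau}^{l}$ with $p \in \{0,1\}$ and $n,m,l \in \Z$. Feeding this normal form into Lemma \ref{cybounds} with the Cygan metric formula (eq.\ \ref{cymet}), exactly as in Lemma \ref{expbds2}, gives explicit numerical bounds on $|n|, |m|, |l|$, reducing the hunt for relations to a finite combinatorial search.

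The hardest step, as in the $d=2$ case, will be the covering argument needed to pin down the covering depth $n$ of $\Gamma(11)$. One chooses a candidate height $u = u(n{+}1) + \epsilon$, enumerates all $\mathcal{O}_{11}$-rational boundary points of depth $\leq n$ that lie in (or near) $D_{\infty}(11)$ together with their associated extended Cygan balls of radius $(4/\mathrm{dep}(p))^{1/4}$, and then decomposes the prism $D_{\infty}(11) \times \{u\}$ into affinely convex pieces, each piece with its vertices verifiably inside one of these balls. By Lemma \ref{convex}, affine convexity of the Cygan balls promotes each vertex-containment to containment of the whole piece, and Proposition \ref{covarg} then upgrades the boundary covering to a covering of all of $\mathbb{H}^{2}_{\C}$. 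This is where pictures (analogous to Figures 1--2 for $d=2$) will be essential, and where experimental work must determine the minimal $n$ for which such a decomposition exists.

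With the covering depth in hand, step 4 enumerates all $q \in \mathcal{O}_{11}$ with $|q|^{2} \leq n$ by solving $a^{2} + ab + 3b^{2} = k$ for $q = a + b\tau$, step 5 picks a $\Gamma_{\infty}(11)$-orbit representative for each resulting boundary point in $D_{\infty}(11)$, and step 6 produces for each representative $p_{i}$ an explicit matrix $A_{i} \in \PU(2,1;\mathcal{O}_{11})$ with $A_{i}(\infty) = p_{i}$ (using stabilizer tricks, translating known matrices by known ones, and row/column manipulations, as in Section 5). By Proposition \ref{gener} these matrices together with $S_{\infty}(11)$ generate $\Gamma(11)$. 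Step 7 then loops over all admissible triples $(a,b,c)$ and all cusp-stabilizer elements $\gamma^{1}_{\infty}, \gamma^{3}_{\infty}$ whose normal-form exponents satisfy the bounds from step 2, recording a relation $R_{a,b,c}$ whenever $A_{a} \gamma^{1}_{\infty} p_{b} = \gamma^{3}_{\infty} p_{c}$ by writing $(\gamma^{*}_{\infty})^{-1} A_{c}^{-1} (\gamma^{3}_{\infty})^{-1} A_{a} \gamma^{1}_{\infty} A_{b}$ in normal form. The resulting presentation (expected to have $263$ generators and $23{,}673$ relations) is then fed to MAGMA's \texttt{Simplify} function, which reduces it to the $5$ generators and $26$ relations displayed in the statement; the explicit matrices in the final presentation are precisely $R$, $T_{1}$, $T_{v}$, $I = I_{0}$, and $A = A_{5,1}$ recovered from $\mathcal{A}$.
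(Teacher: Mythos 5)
Your proposal is correct and follows essentially the same route as the paper: it reproduces the nine-step Mark--Paupert outline for $d=11$, including the cusp stabilizer presentation and prism from \cite{9}, the normal form $R^{p}T_{v}^{n}T_{1}^{m}T_{\tau}^{l}$ with exponent bounds from Lemma \ref{cybounds}, the covering-depth argument via affinely convex pieces inside extended Cygan balls (Lemma \ref{convex} and Proposition \ref{covarg}), the enumeration of $q=a+b\tau$ with $a^{2}+ab+3b^{2}=k$, and the final MAGMA simplification from $263$ generators and $23{,}673$ relations down to the stated $5$ and $26$. No substantive differences from the paper's argument.
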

\begin{cor}\label{ab11}
 The abelianization of \(\Gamma(11)\) is \(\Z/2\Z \times \Z/2\Z \times \Z/2\Z\).
\end{cor}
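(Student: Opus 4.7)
The plan is to compute the abelianization directly from the presentation in Theorem~\ref{d=11 pres}. Writing the five generators $R, T_1, T_v, I, A$ additively as a basis of $\Z^5$, each of the twenty-six relators yields an integer row vector via its exponent sums (each true commutator gives the zero row and is discarded). The abelianization is then the cokernel of the resulting integer matrix $M$, and the claim reduces to showing that the Smith normal form of $M$ has invariant factors $(1,1,2,2,2)$.

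For a by-hand analysis, I would first record the torsion rows coming from $R^2$, $I^2$, $A^4$, giving $2R = 0$, $2I = 0$, $4A = 0$. I would then use the shortest non-torsion relators to eliminate generators. The relator $R T_v T_1^{-1} R T_1^{-1}$ abelianizes to $2R - 2T_1 + T_v = 0$, so modulo $2R = 0$ we obtain $T_v = 2T_1$, allowing $T_v$ to be removed from the generating set. Similarly, $T_1^{-1} I T_1 I A^2 I$ abelianizes to $3I + 2A = 0$; combined with $2I = 0$ this forces $I = 2A$ (and recovers $4A = 0$), so $I$ is also redundant. After these two substitutions only $R, T_1, A$ remain, subject to $2R = 0$ and $4A = 0$.

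Next I would substitute $T_v \mapsto 2T_1$ and $I \mapsto 2A$ into each of the remaining eighteen longer relators and read off the resulting integer rows in $\Z R \oplus \Z T_1 \oplus \Z A$. I expect at least one of these rows to force $2A = 0$, dropping the order of $A$ from $4$ to $2$, and additional rows to force $T_1$ to have order $2$, while the rest become redundant. Running the usual Smith normal form reduction on this much smaller matrix should then yield the invariant factors $(2,2,2)$, giving the claimed $\Z/2\Z \times \Z/2\Z \times \Z/2\Z$.

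The main obstacle is purely bookkeeping: many of the twenty-six relators have length between twenty and thirty letters, so computing the exponent sums correctly by hand is error-prone, and tracking the substitutions $T_v \mapsto 2T_1$ and $I \mapsto 2A$ through all of them compounds the risk of sign mistakes. For this reason, in parallel to the treatment of Corollary~\ref{ab2} in the $d=2$ case, the cleanest route is to feed the presentation of Theorem~\ref{d=11 pres} into the MAGMA routine \texttt{AbelianQuotient} \cite{7} and read off $\Z/2\Z \times \Z/2\Z \times \Z/2\Z$ directly, confirming the conceptual reduction above.
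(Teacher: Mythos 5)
Your proposal is correct and ends up in the same place as the paper, which likewise obtains this corollary by applying MAGMA's \texttt{AbelianQuotient} to the presentation of Theorem~\ref{d=11 pres}. Your by-hand reductions are sound as far as they go (e.g.\ the relator $A^{-1}T_1IT_1^{-1}IT_1^{-1}A^{-1}IT_1^{-1}$ abelianizes to $-2A-2T_1+3I=0$, which together with $I=-2A$ and $4A=0$ forces $2T_1=0$, and then the ninth relator forces $2A=0$), so they give a genuine partial check that the paper does not spell out; completing the argument still requires verifying that none of the remaining relators collapses $(\Z/2\Z)^3$ further, which is exactly the bookkeeping the machine computation handles.
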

\subsection{Finding \(D_{\infty}(11)\) and \(\Gamma_{\infty}(11)\)}
  \noindent Once again, we let \(B((z,v),r)\) be the open extended Cygan ball centered at \(p = (z,v) \in \partial_{\infty}\mathbb{H}^2_{\C}\) with radius \(r\).  Recall that balls of depth \(n\) appear at height \(u(n) =\frac{2}{\sqrt{n}}\).  The covering argument for \(d=11\) follows the same line of reasoning as \(d=2\).  More information on the cusp stabilizer, \(\Gamma_{\infty}(11)\) can be found in Section 5.3 of \cite{9}.
  \begin{lemma}[\cite{9}]
   1. The cusp stabilizer, \(\Gamma_{\infty}(11)\), admits the following presentation:
\[\Gamma_{\infty}(11) = \left\langle T_{1},T_{\tau},T_{v},R:T_{v}[T_{1},T_{\tau}],[T_{1},T_{v}],[T_{\tau},T_{v}],R^{2},[R,T_{v}],[T_1,R]T_1^{-2}T_v,[T_{\tau},R]T_{\tau}^{-2}T_v \right\rangle\]
2. Let \(D_{\infty}(11) \subset \partial_{\infty} \mathbb{H}^{2}_{\C}\) be the affine convex hull of the points with horospherical coordinates \((0,0),(1,0),(\tau,0),\)
\((0,2\sqrt{11}),(1,2\sqrt{11}),(\tau,2\sqrt{11})\).  Then \(D_{\infty}(11)\) is a fundamental domain for \(\Gamma_{\infty}(11)\) acting on \(\partial_{\infty} \mathbb{H}^{2}_{\C} - \{\infty\}\).
\\[1\baselineskip]
We use the following generators for \(\Gamma_{\infty}(11)\):

\[T_1 = \begin{bmatrix}        
         1 & -1 & -1+\tau \\
         0 & 1 & 1 \\
         0 & 0 & 1 \\
        \end{bmatrix}\textrm{, }
        T_{\tau} =  \begin{bmatrix}        
         1 & -1+\tau & -2+\tau \\
         0 & 1 & \tau \\
         0 & 0 & 1 \\
        \end{bmatrix}\textrm{, }
        T_{v} = \begin{bmatrix}
                        1 & 0 & 1-2\tau \\
                        0 & 1 & 0 \\
                        0 & 0 & 1 \\
                       \end{bmatrix}\textrm{, }
                       R = \begin{bmatrix}
                            1 & 0 & 0 \\
                            0 & -1 & 0 \\
                            0 & 0 & 1 \\
                           \end{bmatrix}\]
where \(T_{z}\) is a Heisenberg translation matrix (eq. \ref{trans}) and \(R\) a Heisenberg rotation matrix (eq. \ref{rot}).
  \end{lemma}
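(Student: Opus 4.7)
The plan is to prove both parts of the lemma by analyzing the structure of $\Gamma_{\infty}(11)$ inside the stabilizer of $\infty$ in $\PU(2,1)$. First, because $\Gamma(11)$ is a lattice, the cusp stabilizer must preserve the family of horospheres based at $\infty$; this rules out dilations and places $\Gamma_{\infty}(11)$ inside the Cygan isometry group, which is the semidirect product of the Heisenberg group $\textbf{H}$ with the circle of rotations $U(1)$. So there is a short exact sequence $1 \to \Lambda \to \Gamma_{\infty}(11) \to F \to 1$, where $\Lambda$ is the normal subgroup of Heisenberg translations and $F$ is a finite rotation group.

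Next I would identify $\Lambda$ and $F$ explicitly. Using the form of Heisenberg translation matrices (eq. \ref{trans}), integrality in $\mathcal{O}_{11}$ forces $z \in \mathcal{O}_{11} = \Z[\tau]$ together with $(-|z|^2 + iv)/2 \in \mathcal{O}_{11}$, and one checks that these congruences parameterize a rank-three lattice generated by $T_1$, $T_\tau$, and the smallest admissible purely vertical translation $T_v$. The rotation group $F$ is shown to be $\Z/2$ generated by $R = \operatorname{diag}(1,-1,1)$ by checking the unitarity and integrality constraints on candidate rotation matrices (eq. \ref{rot}) that normalize $\Lambda$; no nontrivial root of unity other than $-1$ preserves $\mathcal{O}_{11}$ because $-1$ is the only unit of $\mathcal{O}_{11}$ distinct from $1$.

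To derive the relations, I would compute commutators via the Heisenberg multiplication law (eq. \ref{heis}). The commutator $[T_1, T_\tau]$ is a pure vertical translation whose magnitude identifies it with $T_v^{-1}$, producing the relation $T_v[T_1,T_\tau]$. Centrality relations $[T_1,T_v]=[T_\tau,T_v]=[R,T_v]=1$ follow from vertical translations being central in $\textbf{H}$ and from $R$ fixing the vertical axis pointwise. The conjugation relations $[T_1,R]T_1^{-2}T_v$ and $[T_\tau,R]T_\tau^{-2}T_v$ encode $RT_{(z,0)}R^{-1} = T_{(-z,0)}$ re-expressed in the chosen generators, the vertical correction $T_v$ arising from the noncommutativity in $\textbf{H}$. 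To see the presentation is complete, I would establish a normal form of type $R^p T_v^n T_1^m T_\tau^l$ with $p \in \{0,1\}$ analogous to Lemma \ref{nmfm2}, or equivalently run a Reidemeister–Schreier computation against the extension $\Lambda \rtimes F$.

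For the fundamental domain claim, I would verify that the horizontal projection of $D_{\infty}(11)$ to the $z$-plane is a fundamental parallelogram for the translation action of $\langle T_1, T_\tau \rangle$ on $\C = \R\cdot 1 \oplus \R \cdot \tau$, while its $v$-extent $[0, 2\sqrt{11}]$ is exactly one period of $T_v$, and then check that the $R$-action $z \mapsto -z$ identifies boundary points of the prism by composition with elements of $\langle T_1, T_\tau \rangle$. The main obstacle is completeness of the presentation: proving that every word that dies in $\Gamma_{\infty}(11)$ is a consequence of the listed relations, which requires verifying that the normal-form algorithm terminates consistently. This in turn rests on a careful Heisenberg-law computation confirming the exact sign and magnitude of $T_v$ so that the $R$-conjugation and commutator corrections match the stated relators.
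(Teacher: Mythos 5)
The paper gives no proof of this lemma at all --- it is imported verbatim from Section 5.3 of \cite{9} --- so there is nothing in the text to compare your argument against line by line. That said, your route is the natural one and is essentially the argument one would expect: realize \(\Gamma_{\infty}(11)\) as an extension \(1\to\Lambda\to\Gamma_{\infty}(11)\to\Z/2\to1\) of the integral Heisenberg lattice by the rotation \(R\) (the rotation part of any element must be a unit of \(\mathcal{O}_{11}\), hence \(\pm1\), and a cusp stabilizer of a lattice contains no dilations), read the relators off the Heisenberg multiplication law (eq.~\ref{heis}), and certify completeness via the normal form \(R^{p}T_{v}^{n}T_{1}^{m}T_{\tau}^{l}\) --- which is exactly the normal form the paper itself records later as Lemma~\ref{nmfm11}. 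Your warning that the sign and magnitude of \(T_v\) must be pinned down is well taken: with \(T_1=T_{(1,\sqrt{11})}\), \(T_\tau=T_{(\tau,\sqrt{11})}\) one finds \([T_1,T_\tau]=T_{(0,-2\sqrt{11})}\), so the relators as printed force \(T_v=T_{(0,2\sqrt{11})}\) (the matrix with entry \(-1+2\tau\) appearing in Theorem~\ref{d=11 pres}, not the one with entry \(1-2\tau\) displayed in the lemma); this is precisely the kind of bookkeeping your deferred computation has to settle.

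The one concrete error is in your fundamental domain step. The horizontal projection of \(D_{\infty}(11)\) to the \(z\)-plane is the \emph{triangle} with vertices \(0,1,\tau\), which is only half of a fundamental parallelogram for the translation lattice \(\Z\oplus\Z\tau\) underlying \(\langle T_1,T_\tau\rangle\); it is the rotation \(z\mapsto -z\), composed with translation by \(1+\tau\), that carries the complementary half back onto this triangle. As written, your plan --- verify the projection is a full fundamental parallelogram for the translations \emph{and} then impose further \(R\)-identifications on the prism --- is internally inconsistent: a prism over a full parallelogram of height one vertical period is already a fundamental domain for \(\Lambda\), so any additional nontrivial \(R\)-identification would make the region twice too large for \(\Lambda\rtimes\langle R\rangle\). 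The correct accounting is that \((\text{parallelogram})\times[0,2\sqrt{11}]\) is a fundamental domain for \(\Lambda\) and the index-two rotation cuts it to \((\text{triangle})\times[0,2\sqrt{11}]\), which is \(D_{\infty}(11)\). With that repair, the rest of your outline goes through.
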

\noindent Following a similar line of reasoning as in the \(d=2\) case, we have:
\begin{lemma}\label{nmfm11}
 For any \(\gamma \in \Gamma_{\infty}(11)\), \(\gamma\) can be written as:
 \[ \gamma = R^{p}{T_{v}}^{n}{T_1}^{m}{T_{\tau}}^{l} \textrm{ for some } m,n,l \in \Z
 \textrm{ and } p = 0,1\]
\end{lemma}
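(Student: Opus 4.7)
The plan is to mimic the proof of Lemma \ref{nmfm2}: first extract from the defining relations of $\Gamma_{\infty}(11)$ a list of simple rewriting rules, then give an algorithm that repeatedly applies them to push $R$'s all the way to the left, then $T_v$'s, then $T_1$'s, leaving $T_\tau$'s at the end.

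First I would list the consequences of the relations that the algorithm needs. From $[T_1,T_v]=[T_\tau,T_v]=[R,T_v]=1$ we see that $T_v$ is central in $\Gamma_{\infty}(11)$. From $R^2 = 1$ we get $R = R^{-1}$. The relation $T_v[T_1,T_\tau]=1$ rewrites as $T_\tau T_1 = T_v T_1 T_\tau$, so any commutation of $T_1^{\pm 1}$ past $T_\tau^{\pm 1}$ costs only a central factor $T_v^{\pm 1}$. Finally, the relations $[T_1,R]T_1^{-2}T_v = 1$ and $[T_\tau,R]T_\tau^{-2}T_v=1$ can be rearranged (using $R = R^{-1}$ and centrality of $T_v$) to give $R T_1 R = T_v T_1^{-1}$ and $R T_\tau R = T_v T_\tau^{-1}$. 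Consequently, conjugating $T_1^{\pm 1}$ or $T_\tau^{\pm 1}$ by $R$ inverts the letter and introduces a factor of $T_v$.

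With these four tools in hand, I would run the following normalization on an arbitrary word $\gamma$ representing an element of $\Gamma_{\infty}(11)$, in this order:
\begin{enumerate}
\item Slide every occurrence of $R^{\pm 1}$ leftward. Passing $R$ across a $T_v^{\pm 1}$ costs nothing (centrality of $T_v$), while passing $R$ across a $T_1^{\pm 1}$ or $T_\tau^{\pm 1}$ flips the letter's sign and deposits a $T_v^{\pm 1}$ into the word. Then collapse the collected powers of $R$ using $R^2 = 1$; at most one $R$ survives.
\item Slide every $T_v^{\pm 1}$ leftward, past the (at most one) $R$ and into the second leftmost position; this is free by centrality. Simplify to a single power $T_v^n$.
\item Slide every $T_1^{\pm 1}$ leftward past all surviving $T_\tau^{\pm 1}$'s, using $T_\tau T_1 = T_v T_1 T_\tau$ and its inverse form. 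Each swap introduces a $T_v^{\pm 1}$; use centrality to push these factors into the $T_v$ block produced in Step 2, and merge exponents.
\item Collect the remaining $T_\tau^{\pm 1}$ letters, which are now all together at the right, into a single power $T_\tau^l$.
\end{enumerate}
The output of this procedure is a word of the required form $R^p T_v^n T_1^m T_\tau^l$ with $p \in \{0,1\}$ and $m,n,l \in \Z$.

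The main thing to verify, and the only real content of the proof, is that the algorithm terminates. This is clear step by step: in Step 1 one can measure progress by, say, the position (from the left) of the rightmost $R$ and the total number of $R$'s; each elementary move strictly decreases a lexicographic pair, and the introduced $T_v^{\pm 1}$'s commute with everything so they do not interfere. Step 2 and Step 4 are obvious. Step 3 is essentially the Heisenberg commutator relation: the number of $(T_\tau^{\pm 1}, T_1^{\pm 1})$ out-of-order adjacencies strictly decreases at each swap, and the new $T_v^{\pm 1}$ factors are absorbed into the already-collected block rather than creating new $T_1/T_\tau$ disorder. I do not anticipate any genuine obstacle; the argument is a direct transcription of the $d=2$ case, with the $T_v$ relation here playing the role that $T_{2\sqrt 2}^{\pm 4}$ played in Lemma \ref{nmfm2}.
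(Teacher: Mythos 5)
Your proposal is correct and is exactly the argument the paper intends: Lemma \ref{nmfm11} is stated with only the remark that it follows ``a similar line of reasoning as in the $d=2$ case,'' and your write-up is precisely that adaptation, with the correctly derived rules $RT_1R = T_vT_1^{-1}$, $RT_\tau R = T_vT_\tau^{-1}$, and $T_\tau T_1 = T_vT_1T_\tau$ replacing their $d=2$ counterparts (and you rightly note that the commutator now costs a single central $T_v^{\pm1}$ rather than $T_{2\sqrt 2}^{\pm4}$). The termination discussion is a small bonus beyond what the paper records.
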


\begin{lemma}\label{expbds11}
If \(\Gamma(11)\) admits the relation \(A^{-1}_c(\gamma^{3}_{\infty})^{-1}A_a\gamma^{1}_{\infty}A_b = \gamma^{*}_{\infty}\) for \(a,b,c \in \{1,...,r\}\) and \newline \(\gamma^1_{\infty}\),\(\gamma^3_{\infty}\),\(\gamma^{*}_{\infty} \in \Gamma_{\infty}(11)\), then, using the normal form as in Lemma \ref{nmfm11}, the exponents of \(\gamma^1_{\infty}\), \(\gamma^3_{\infty}\), and \(\gamma^{*}_{\infty}\) satisfy 
 \[|n|\leq 21 \textrm{, } |m|\leq 9 \textrm{, } |l|\leq 5\]
\end{lemma}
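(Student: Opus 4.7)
The proof will follow the exact template of Lemma \ref{expbds2}, only with the Heisenberg arithmetic adapted to the lattice $\mathcal{O}_{11}$ and to the generators $T_1, T_v, T_\tau, R$ of $\Gamma_\infty(11)$. The plan is first to apply Lemma \ref{cybounds} to the relation $A_c^{-1}(\gamma^3_\infty)^{-1} A_a \gamma^1_\infty A_b = \gamma^*_\infty$, producing upper bounds on $d_C(\gamma^1_\infty p_0, p_0)$, $d_C(\gamma^3_\infty p_0, p_0)$, and $d_C(\gamma^*_\infty p_0, p_0)$ in terms of the covering depth of $\Gamma(11)$ and the maxima over $i$ of $d_C(p_i, p_0)$ and $d_C(A_i^{-1}(\infty), p_0)$. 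These numerical quantities are computed in the next subsections of the paper analogously to the case $d=2$, producing an explicit constant $C$ for each case in Lemma \ref{cybounds}.

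Second, I would unwind the left-hand side $d_C(\gamma^1_\infty p_0, p_0)$ using the normal form from Lemma \ref{nmfm11}. Identifying $T_1, T_\tau, T_v$ with Heisenberg translations by $(1, \sqrt{11})$, $(\tau, \sqrt{11})$, $(0, 2\sqrt{11})$ respectively (read off from the explicit matrices and the form (eq. \ref{trans})), and using the Heisenberg multiplication law (eq. \ref{heis}), a direct induction gives
\[
T_v^n T_1^m T_\tau^l (0,0) = \bigl(m + l\tau,\; (2n + m + l - ml)\sqrt{11}\bigr).
\]
The optional factor $R^p$ only negates the $z$-coordinate, so it leaves the Cygan distance to $p_0 = (0,0)$ unchanged. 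Substituting into (eq. \ref{cymet}) yields
\[
d_C(p_0, \gamma^1_\infty p_0) = \bigl||m + l\tau|^4 + 11\,(2n + m + l - ml)^2\bigr|^{1/4}.
\]

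Third, combining these two computations produces an explicit inequality
\[
\bigl||m+l\tau|^4 + 11\,(2n+m+l-ml)^2\bigr|^{1/4} \leq C,
\]
where $C$ is the constant from Lemma \ref{cybounds} for the appropriate case. Since $m, n, l \in \mathbb{Z}$ and $|m + l\tau|^2 = m^2 + ml + 3l^2$ (using $\tau + \bar\tau = 1$ and $\tau\bar\tau = 3$), this is a finite integer-optimization problem whose maximal feasible values yield $|n| \leq 21$, $|m| \leq 9$, $|l| \leq 5$. The derivations for $\gamma^3_\infty$ and $\gamma^*_\infty$ are identical except that the corresponding bound from Lemma \ref{cybounds} is used in place of the one for $\gamma^1_\infty$, and the larger of the resulting bounds on each exponent is taken.

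The main obstacle is not conceptual but computational: one must first complete the covering-depth calculation for $\Gamma(11)$ and tabulate $\max_i d_C(p_i, p_0)$ and $\max_i d_C(A_i^{-1}(\infty), p_0)$ (both of which depend on the explicit list of $\mathcal{O}_{11}$-rational representatives in $D_\infty(11)$), and then verify that the resulting integer inequality is indeed solved by precisely the triples satisfying $|n|\leq 21$, $|m|\leq 9$, $|l|\leq 5$. This last step is an elementary but tedious case check, most efficiently handled by a short computer search over the finitely many integer triples making the left-hand side at most $C$.
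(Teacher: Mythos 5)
Your proposal follows essentially the same route as the paper, which explicitly omits the details here and defers to the $d=2$ template: apply Lemma \ref{cybounds} with the stated numerical bounds ($3.4880$, $2.5661$, $2.6901$), expand $\gamma_\infty$ in the normal form of Lemma \ref{nmfm11} via the Heisenberg group law, and reduce to a finite integer search. Your explicit computation of $T_v^nT_1^mT_\tau^l(0,0)$ and of $|m+l\tau|^2=m^2+ml+3l^2$ is correct (up to an immaterial sign convention on $T_v$) and in fact supplies details the paper leaves implicit.
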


\noindent In order to obtain the bound above, one can easily check the bounds from Lemma \ref{cybounds} for the \(d=11\) case satisfy:
\[\left(\frac{4dep(p_{c})}{dep(p_{a})dep(p_{b})}\right)^{\frac{1}{4}} < 3.4880 \textrm{, } 
d_{C}(p_{b},p_{0}) < 2.5661 \textrm{, }
d_{C}(p_{0},{A_{a}}^{-1}(\infty)) < 2.6901\]

\subsection{Covering depth of \(\Gamma(11)\)}

\begin{lemma}
 Let \(u = u(44) + \epsilon =0.3015114\), and \(H_u\) be the horosphere of height \(u\) based at \(\infty\).  Then the prism, \(D_{\infty}(11) \times \{u\}\), is covered by the intersections with \(H_u\) of the following extended Cygan balls:
 \small
 \\[1\baselineskip]
 Depth 1: \(B((0,0),\sqrt{2}), B((0,2\sqrt{11}),\sqrt{2}), B((1,\sqrt{11}),\sqrt{2}), B((\tau,\sqrt{11}),\sqrt{2}),B((-1+\tau,\sqrt{11}),\sqrt{2}),B((1+\tau,\sqrt{11}),\sqrt{2})\)
 \\[1\baselineskip]
 Depth 3: \(B((\frac{1}{3}\tau,\frac{5}{3}\sqrt{11}),(\frac{4}{3})^{\frac{1}{4}}), B((\frac{1}{3} + \frac{2}{3}\tau,\frac{5}{3}\sqrt{11}),(\frac{4}{3})^{\frac{1}{4}}),B((\frac{2}{3} + \frac{1}{3}\tau,\sqrt{11}),(\frac{4}{3})^{\frac{1}{4}}) \)
 \\[1\baselineskip]
 Depth 4: \(B((0,\sqrt{11}),1), B((1,2\sqrt{11}),1)\)
 \\[1\baselineskip]
 Depth 5: \(B((\frac{1}{5}+\frac{2}{5}\tau,\frac{1}{5}\sqrt{11}),(\frac{4}{5})^\frac{1}{4})\) \indent \indent \indent \indent \indent \indent Depth 23: \(B((\frac{3}{23}+\frac{4}{23}\tau,\frac{31}{23}\sqrt{11}),(\frac{4}{23})^\frac{1}{4})\)
 \\[1\baselineskip]
 Depth 9: \(B((0,\frac{4}{3}\sqrt{11}),(\frac{4}{9})^\frac{1}{4}), B((1,\frac{5}{3}\sqrt{11}),(\frac{4}{9})^\frac{1}{4})\)
  \indent \indent Depth 25: \(B((\frac{14}{25}+\frac{4}{25}\tau,\frac{36}{25}\sqrt{11}),(\frac{4}{25})^\frac{1}{4})\)
 \normalsize
\end{lemma}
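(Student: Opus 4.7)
The plan is to mimic exactly the strategy used in the covering lemma for $\Gamma(2)$ in the previous section, but now for the prism $D_{\infty}(11) \times \{u\}$. First, I would introduce a collection of auxiliary points in horospherical coordinates lying in $D_{\infty}(11) \times \{u\}$, chosen so that together with the six vertices of the prism and a handful of ``seam'' points, they furnish the vertex sets of an affine decomposition of the prism into finitely many convex polyhedra $R_1,\dots,R_N$. The ball centers listed in the statement (depths $1, 3, 4, 5, 9, 23, 25$) project naturally to interior or boundary points of $D_{\infty}(11)$, and each piece $R_k$ should be designed to lie close to exactly one of them.

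Once the decomposition is set up, the verification reduces, by Lemma \ref{convex}, to checking finitely many inequalities: for each $R_k$ and the corresponding claimed covering ball $B((z_k,v_k),r_k)$, it suffices to verify that every vertex of $R_k$ satisfies $d_{XC}((z,v,u),(z_k,v_k,0)) < r_k$, using the explicit formula
\[
d_{XC}((z,v,u),(z_k,v_k,0)) = \bigl|(|z-z_k|^{2}+u)^{2} + |v - v_k + 2\operatorname{Im}(z\cdot \bar z_k)|^{2}\bigr|^{\frac{1}{4}}.
\]
Since extended Cygan balls are affinely convex in horospherical coordinates, containment at the vertices upgrades to containment of the entire convex hull $R_k$. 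Covering all of $D_{\infty}(11) \times \{u\}$ then follows from $\bigcup_k R_k = D_{\infty}(11) \times \{u\}$, which can be checked face by face: every face of each $R_k$ either lies on $\partial D_{\infty}(11) \times \{u\}$ or is shared with a neighboring $R_j$.

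I would also produce an illustrative picture of the decomposition analogous to Figure~2 in the $d=2$ case to make the combinatorial adjacency of the $R_k$ transparent to the reader, together with a figure of the Cygan-ball cover itself analogous to Figure~1. The main obstacle, as in the $d=2$ case, is not any one calculation but rather finding an affine decomposition whose pieces are individually small enough to fit in the stated extended Cygan balls while still tiling the entire prism. In particular, the prism is considerably taller than in the $d=2$ case (since $2\sqrt{11}$ replaces $2\sqrt{2}$ in the Heisenberg-translation lattice) and the covering depth jumps to $44$, so the balls of depths up to $25$ are comparatively small and several narrow ``gap'' regions (for example near the midpoints of the vertical edges of the prism) must be covered by the deeper balls; a careful choice of the $\tau$-coordinates of the auxiliary points will be essential to prevent any uncovered sliver.

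Finally, once the decomposition and the vertex-by-vertex distance inequalities are checked (with numerical computation to the required precision), the lemma follows immediately, and its corollary bounding the covering depth by $44$ is obtained exactly as in the $d=2$ case via Corollary \ref{covdepth}.
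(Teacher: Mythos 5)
Your proposal follows essentially the same approach as the paper, which for $d=11$ simply defers to the $d=2$ argument: an affine decomposition of the prism $D_{\infty}(11)\times\{u\}$ into convex pieces (Figure 4), vertex-by-vertex verification of the extended Cygan distance inequalities against each claimed ball, and Lemma \ref{convex} to upgrade vertex containment to containment of each whole piece. One minor correction to your closing remark: the resulting bound on the covering depth is $43$, not $44$, since covering at height $u(44)+\epsilon > 2/\sqrt{44}$ forces the covering depth $n$ to satisfy $2/\sqrt{n+1} < u^{cov}$, hence $n+1\leq 44$.
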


\begin{cor}
 The covering depth of \(\Gamma(11)\) is at most \(43\).
\end{cor}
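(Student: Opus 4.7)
The plan is to derive the corollary as a direct consequence of the preceding lemma, combined with the covering arguments already established earlier in the paper. The main point is that the lemma produces an explicit cover of the prism $D_{\infty}(11) \times \{u\}$ at the specific height $u = u(44) + \epsilon = 0.3015114$ by extended Cygan balls whose centers are $\mathcal{O}_{11}$-rational boundary points of depth at most $43$, and I will translate this into a bound on the covering depth.

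First I would invoke Lemma \ref{intersect} together with Lemma \ref{Cyganford} to reinterpret the listed extended Cygan balls as the traces on $H_u$ of horoball translates $A_i(B_u)$, where each $A_i \in \PU(2,1;\mathcal{O}_{11})$ satisfies $A_i(\infty) = p_i$ for $p_i$ the center of the corresponding ball. Thus the lemma's Cygan-ball cover of $D_{\infty}(11) \times \{u\}$ is in fact a cover by $\Gamma$-translates of $B_u$. Because $D_{\infty}(11)$ is a fundamental domain for the action of $\Gamma_{\infty}(11)$ on $\partial_\infty \mathbb{H}^2_\C - \{\infty\} \simeq H_u$, and because the cusp stabilizer preserves $B_u$ setwise, I can propagate the cover by $\Gamma_\infty(11)$ to obtain a cover of all of $H_u = \partial B_u$ by $\Gamma$-translates of $B_u$. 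Proposition \ref{covarg} then lifts this to a cover of the entire complex hyperbolic plane $\mathbb{H}^2_\C$.

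Having established that the $\Gamma$-translates of $B_u$ cover $\mathbb{H}^2_\C$, Definition \ref{coveringdepth} gives $u^{cov} \geq u = u(44) + \epsilon > \frac{2}{\sqrt{44}}$. On the other hand, the covering depth $n$ is characterized by $\frac{2}{\sqrt{n+1}} < u^{cov} \leq \frac{2}{\sqrt{n}}$, so in particular $\frac{2}{\sqrt{n}} \geq u^{cov} > \frac{2}{\sqrt{44}}$. Rearranging yields $\sqrt{n} < \sqrt{44}$, and hence $n \leq 43$, which is the claimed bound.

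There is really no serious obstacle at the level of the corollary itself, since the computational heavy lifting was absorbed into the lemma's explicit affine decomposition of the prism and the extended Cygan radius checks at each of the listed centers. The only care needed in writing out the argument is to keep straight the distinction between the extended Cygan ball cover on $H_u$ and the corresponding horoball cover of $\mathbb{H}^2_\C$, so that the invocations of Lemma \ref{intersect} and Proposition \ref{covarg} are applied in the correct order, exactly as in the analogous step for $\Gamma(2)$.
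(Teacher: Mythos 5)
Your proposal is correct and follows essentially the same route as the paper: the explicit extended Cygan ball cover of $D_{\infty}(11)\times\{u\}$ from the lemma is converted into a horoball cover of $H_u$ via Lemma \ref{intersect} and the $\Gamma_{\infty}(11)$-action, Proposition \ref{covarg} upgrades this to a cover of $\mathbb{H}^2_{\C}$, and the inequality $u^{cov}\geq u(44)+\epsilon > 2/\sqrt{44}$ combined with Definition \ref{coveringdepth} forces $n\leq 43$. This is exactly the argument the paper carries out in detail for $d=2$ and cites by analogy here.
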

\begin{figure}\label{fig3}
 \caption{Covering of \(D_{\infty}(11) \times \{u\}\) with Cygan balls up to depth 43}
 \includegraphics[scale = 0.2]{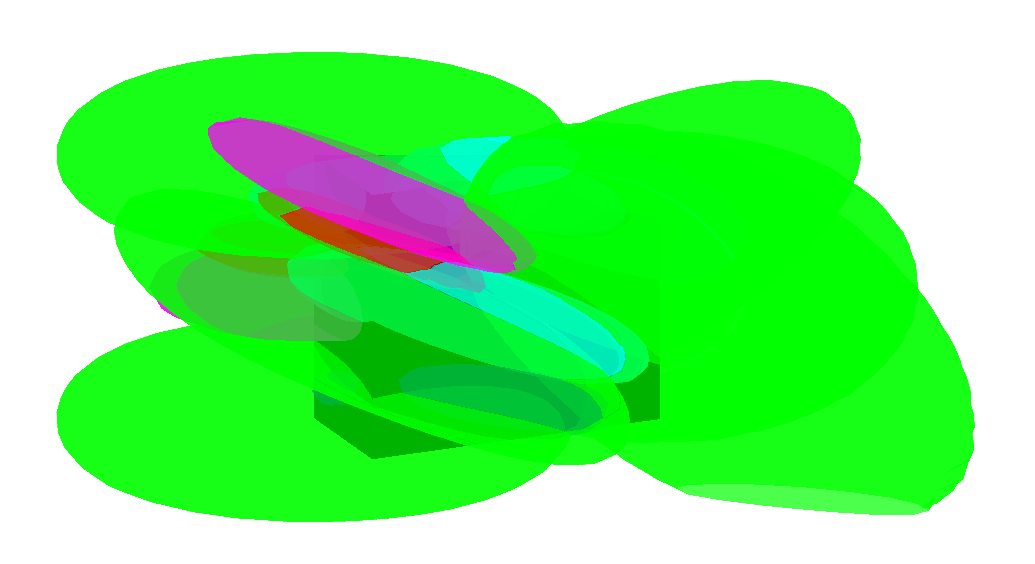}
 \centering
\end{figure}
\noindent Although we only used Cygan balls of depth up to 25, it appears that we still need to pass to depth 43.  We generated pictures (Figure 3) of coverings of \(D_{\infty}(11)\) by Cygan balls.  For heights corresponding to depths \(n \leq 42\), it appeared that balls of depth at most \(n\) did not cover the prism.  A proof similar to the \(d=2\) case makes rigorous the fact that passing to depth \(43\) is sufficient in covering \(\mathbb{H}_{\C}^2\).

\begin{figure}[h]\label{fig4}
 \caption{An affine decomposition of \(D_{\infty}(11) \times \{u\}\)}
 \includegraphics[scale = 0.12]{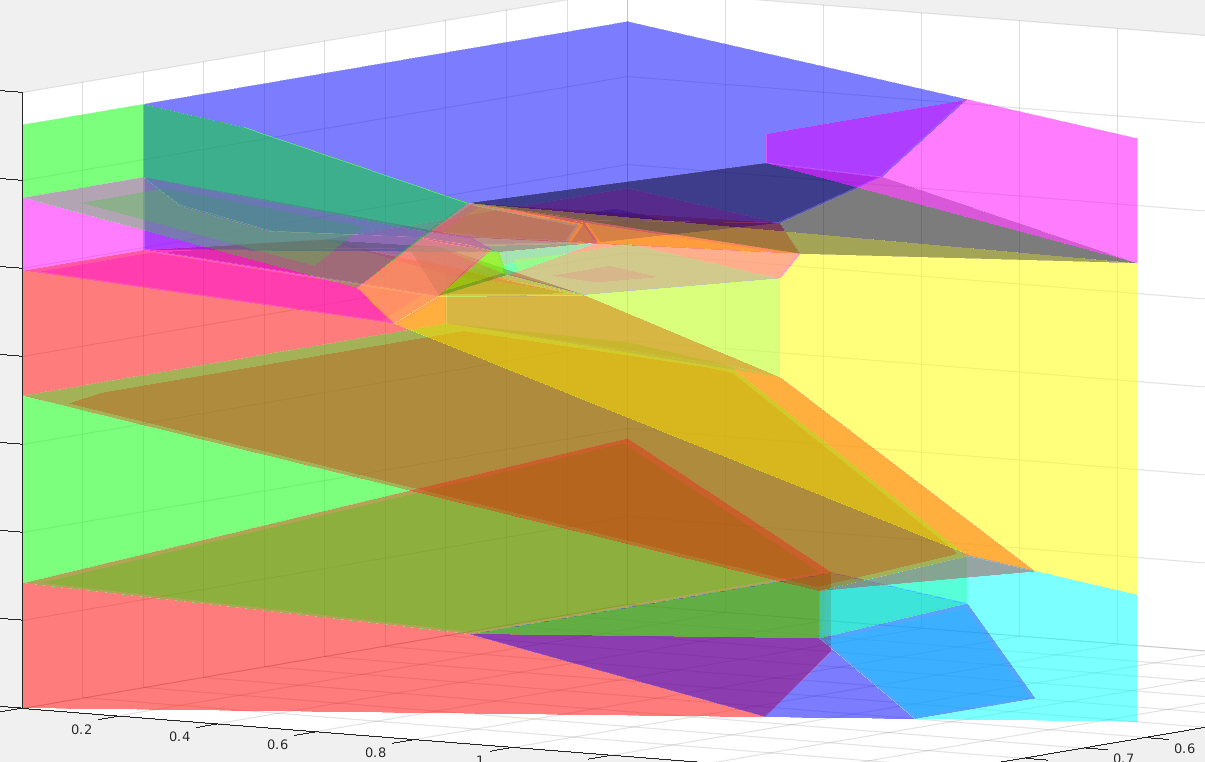}
 \centering
\end{figure}

\subsection{\(\mathcal{O}_{11}\)-rational points of depth at most \(n=43\)}
To find all the points of depth at most 43 in \(D_{\infty}(11)\), we perform the same type of calculations as the \(d=2\) case. Below are the possible values of ``\(q\)'', where \(q\) is the scale factor from Section 5.4.  We list only the depths where a \(q\) factor is possible.  
\small
\begin{center}
 \begin{tabular}{|c|c|}
  \hline
  \(\textrm{Depth}\) & Possible \(q\)'s \\
  \hline
  1 & 1 \\
  \hline
  3 & \(-1+\tau,\tau\) \\
  \hline
  4 & 2 \\
  \hline
  5 & \(-2+\tau,1+\tau\) \\
  \hline
  9 & \(3,-3+\tau,2+\tau\) \\
  \hline
  11 & \(-1+2\tau\) \\
  \hline
  12 & \(-2+2\tau, 2\tau\)\\
  \hline
  15 & \(-4+\tau,-3+2\tau,1+2\tau,3+\tau\) \\
  \hline
  16 & 4\\
  \hline
  20 & \(-4+2\tau,2+2\tau\) \\
  \hline
  23 & \(-5+\tau,4+\tau\) \\
  \hline
  25 & \(5, -2+3\tau, -1+3\tau\) \\
  \hline
  27 & \(-5+2\tau,-3+3\tau,3+2\tau,3\tau\) \\
  \hline
  31 & \(-4+3\tau,1+3\tau\) \\
  \hline
  33 & \(-6+\tau,5+\tau\) \\
  \hline
  36 & \(6,-6+2\tau, 4+2\tau\)\\
  \hline
  37 & \(-5+3\tau,2+3\tau\) \\
  \hline
 \end{tabular}
\end{center}
\normalsize
We obtain a large number of \(\Gamma_{\infty}(11)\)-orbit representatives in the \(d=11\) case.  For this reason, we list the points in a separate document, ``rational\_points\_matrices\_d11.pdf'', which can be found in \cite{10}.

\subsection{Matrices and primitive integral lifts for \(\mathcal{O}_{11}\)-rational representatives}
  Due to the large number of \(\Gamma_{\infty}\)-orbit representatives of \(\mathcal{O}_{11}\)-rational points in \(D_{\infty}(11)\), our generating set for \(\Gamma(11)\) is substantial, 259 matrices to be exact.  We list these generators in ````rational\_points\_matrices\_d11.pdf'', \cite{10}.  Once again, we do not list primitive integral lifts of the relevant points, as one can recover the primitive integral lifts of these points from the first column of each matrix respectively.  By Proposition \ref{gener}, we have that \(\Gamma(11)\) is generated by:
\[S(11) = \{T_1,T_{\tau},T_{v},R\} \cup \mathcal{A}\]
where \(T_1,T_{\tau},T_{v},R\) are the generators of \(\Gamma_{\infty}(11)\) and \(\mathcal{A}\) is the set of matrices listed in \cite{10}. Using Lemma \ref{expbds11}, and the aid of MATLAB for computations \cite{10}, we can cycle through all possible elements of \(\Gamma_{\infty}(11)\) that could appear in a relation for \(\Gamma(11)\).  Using the MAGMA function ``Simplify(\(\cdot\))'' \cite{7} for simplifying the set of generators and the set of relations, we obtain the presentation in Theorem 3.  The MAGMA computation reduces our original generating set of 263 matrices to a generating set of 5 matrices and our original set of 23,673 relations to a set of 26 relations.  Note, \(\Gamma(11)\) has a presentation involving only 3 generators, but the number of relations and length of some relations is much larger using this generating set.  When using MAGMA, a user has some control over the simplification of a particular presentation.  One can retain particular generators using the parameter ``Preserve:=[]'' in the MAGMA function ``Simplify(\(\cdot\))'', and control the elmination of relations in ''Simplify(\(\cdot\))'' using the parameter ``EliminationLimit:='' \cite{7}.  Once again, we use the function ``AbelianQuotient(\(\cdot\))'' in MAGMA to obtain the abelianization of \(\Gamma(11)\) in Corollary \ref{ab11} \cite{7}.
  
\newpage
\appendixtitleon
\begin{appendices}
\section{\(\mathcal{O}_2\)-rational point representatives}\label{2_points}
\small
\begin{center}
 \begin{tabular}{|c|c|}
  \hline
  Depth & \(\mathcal{O}_2\)-Rational Points \\
  \hline
  1 & \((0,0)\) \\
  \hline
  2 & \((0,\sqrt{2})\) \\
  \hline
  3 & \((\frac{2}{3}+\frac{i\sqrt{2}}{3},\frac{2}{3}\sqrt{2}),(\frac{2}{3}+\frac{2i\sqrt{2}}{3},\frac{2}{3}\sqrt{2}), (\frac{4}{3}+\frac{i\sqrt{2}}{3},0)\) \\
  \hline
  4 & \((1,0),(1,\sqrt{2})\) \\
  \hline
  6 & \((\frac{2}{3}+\frac{2i\sqrt{2}}{3},\frac{5}{3}\sqrt{2}),(\frac{4}{3}+\frac{i\sqrt{2}}{3},\sqrt{2}),(\frac{2}{3}+\frac{i\sqrt{2}}{3},\frac{5}{3}\sqrt{2})\) \\
  \hline
  8 & \((0,\frac{\sqrt{2}}{2}),(0,\frac{3}{2}\sqrt{2}),(1,\frac{\sqrt{2}}{2}),(1,\frac{3}{2}\sqrt{2})\) \\
  \hline
  9 &  \((0,\frac{2}{3}\sqrt{2}),
  (0,\frac{4}{3}\sqrt{2}),
  (\frac{2}{3}+\frac{i\sqrt{2}}{3},0),
  (\frac{2}{3}+\frac{i\sqrt{2}}{3},\frac{4}{3}\sqrt{2}),
  (\frac{2}{3}+\frac{2i\sqrt{2}}{3},0),\)\\
  & \((\frac{2}{3}+\frac{2i\sqrt{2}}{3},\frac{4}{3}\sqrt{2}),
  (\frac{4}{3}+\frac{i\sqrt{2}}{3},\frac{2}{3}\sqrt{2}),
  (\frac{4}{3}+\frac{i\sqrt{2}}{3},\frac{4}{3}\sqrt{2}),\)\\
  & \((\frac{2}{9}+\frac{5i\sqrt{2}}{9},\frac{4}{3}\sqrt{2}),
  (\frac{4}{9}+\frac{i\sqrt{2}}{9},\frac{4}{9}\sqrt{2}),
  (\frac{8}{9}+\frac{2i\sqrt{2}}{9},\frac{16}{9}\sqrt{2}),
  (\frac{2}{9}+\frac{4i\sqrt{2}}{9},\frac{10}{9}\sqrt{2}),
  (\frac{10}{9}+\frac{2i\sqrt{2}}{9},\frac{4}{3}\sqrt{2}),
  (\frac{14}{9}+\frac{i\sqrt{2}}{9},\frac{10}{9}\sqrt{2})\)\\
  \hline
  11 & \((\frac{2}{11}+\frac{3i\sqrt{2}}{11},\frac{8}{11}\sqrt{2}),
  (\frac{4}{11}+\frac{6i\sqrt{2}}{11},\frac{10}{11}\sqrt{2}),
  (\frac{8}{11}+\frac{i\sqrt{2}}{11},\frac{2}{11}\sqrt{2}),
  (\frac{10}{11}+\frac{4i\sqrt{2}}{11},\frac{4}{11}\sqrt{2}),
  (\frac{16}{11}+\frac{2i\sqrt{2}}{11},\frac{8}{11}\sqrt{2}),\)\\
  & \((\frac{2}{11}+\frac{8i\sqrt{2}}{11},\frac{18}{11}\sqrt{2}),
  (\frac{4}{11}+\frac{5i\sqrt{2}}{11},\frac{20}{11}\sqrt{2}),
  (\frac{6}{11}+\frac{2i\sqrt{2}}{11},\frac{6}{11}\sqrt{2}),
  (\frac{12}{11}+\frac{4i\sqrt{2}}{11},\frac{2}{11}\sqrt{2}),
  (\frac{14}{11}+\frac{i\sqrt{2}}{11},\frac{16}{11}\sqrt{2})\)\\
  \hline
  12 &\((\frac{1}{3}+\frac{2i\sqrt{2}}{3},0),
  (\frac{1}{3}+\frac{2i\sqrt{2}}{3},\sqrt{2}),
  (\frac{1}{3}+\frac{2i\sqrt{2}}{3},\frac{16}{9}\sqrt{2}),
  (\frac{2}{9}+\frac{4i\sqrt{2}}{9},2\sqrt{2}),
  (\frac{1}{3}+\frac{i\sqrt{2}}{3},\frac{2}{3}\sqrt{2}),
  (\frac{1}{3}+\frac{i\sqrt{2}}{3},\frac{5}{3}\sqrt{2})\)\\
  \hline
  16 &\((\frac{i\sqrt{2}}{2},0),
  (\frac{i\sqrt{2}}{2},\frac{\sqrt{2}}{2}),
  (\frac{i\sqrt{2}}{2},\sqrt{2}),
  (\frac{i\sqrt{2}}{2},\frac{3}{2}\sqrt{2}),
  (1+\frac{i\sqrt{2}}{2},0),
  (1+\frac{i\sqrt{2}}{2},\frac{\sqrt{2}}{2}),
  (1+\frac{i\sqrt{2}}{2},\sqrt{2}),\) \\
   & \((1+\frac{i\sqrt{2}}{2},\frac{3}{2}\sqrt{2})\)\\
  \hline
 \end{tabular}

\end{center}
\normalsize

\begin{section}{Matrices sending \(\infty\) to \(\mathcal{O}_{2}\)-rational points}\label{2_matrices}
 \scriptsize
\[I_0 = \begin{bmatrix}
         0 & 0 & 1 \\
         0 & -1 & 0 \\
         1 & 0 & 0 \\
        \end{bmatrix},
   A_{2,1} =\begin{bmatrix}
         -1 & 0 & i\sqrt{2} \\
         0 & 1 & 0  \\
         i\sqrt{2} & 0 & 1 \\
        \end{bmatrix},
 A_{3,1} = \begin{bmatrix}
         -1 & 0 & 0 \\
         i\sqrt{2} & -1 & 0 \\
         1+i\sqrt{2} & i\sqrt{2} & -1 \\
        \end{bmatrix},
   A_{3,2} =\begin{bmatrix}
         i\sqrt{2} & 0 & 1 \\
         2 & -1 & -i\sqrt{2} \\
         1-i\sqrt{2} & i\sqrt{2} & -1 \\
        \end{bmatrix},\]
\[A_{3,3} = \begin{bmatrix}
         -1+i\sqrt{2} & 2i\sqrt{2} & 2-i\sqrt{2} \\
         2-i\sqrt{2} & 1-2i\sqrt{2} & -2 \\
         1-i\sqrt{2} & -i\sqrt{2} & -1 \\
        \end{bmatrix},
    A_{4,1} = \begin{bmatrix}
         -1 & -2 & 2 \\
         2 & 3 & -2 \\
         2 & 2 & -1 \\
        \end{bmatrix},
   A_{4,2} =\begin{bmatrix}
         -1+i\sqrt{2} & -2+i\sqrt{2} & 3 \\
         2 & 3 & -2-i\sqrt{2} \\
         2 & 2 & -1-i\sqrt{2} \\
        \end{bmatrix},\]
     \[ A_{6,1} = \begin{bmatrix}
         -3+i\sqrt{2} & 2-i\sqrt{2} & 3+3i\sqrt{2} \\
         2i\sqrt{2} & -1-2i\sqrt{2} & 4-i\sqrt{2} \\
         2+i\sqrt{2} & -2 & 1-2i\sqrt{2} \\
        \end{bmatrix},
    A_{6,2} =\begin{bmatrix}
         -3 & -2+2i\sqrt{2} & 2+2i\sqrt{2} \\
         2+2i\sqrt{2} & 3 & 2-2i\sqrt{2} \\
         2+i\sqrt{2} & 2 & 1-2i\sqrt{2} \\
        \end{bmatrix},\]
\[ A_{6,3} = \begin{bmatrix}
         1+2i\sqrt{2} & 2+2i\sqrt{2} & 2-2i\sqrt{2} \\
         2 & 3 & -2-2i\sqrt{2}\\
         2-i\sqrt{2} & 2-2i\sqrt{2} & -3 \\
        \end{bmatrix},
A_{8,1} = \begin{bmatrix}
         -1 & 0 & i\sqrt{2} \\
         0 & 1 & 0 \\
         2i\sqrt{2} & 0 & 3 \\
        \end{bmatrix},
A_{8,2} =\begin{bmatrix}
         -3 & 0 & i\sqrt{2} \\
         0 & 1 & 0 \\
         2i\sqrt{2} & 0 & 1 \\
        \end{bmatrix},\]
\[  A_{8,3} = \begin{bmatrix}
         -1-i\sqrt{2} & -2-i\sqrt{2} & 1+i\sqrt{2} \\
         2i\sqrt{2} & 1+2i\sqrt{2} & -i\sqrt{2} \\
         2i\sqrt{2} & 2i\sqrt{2} & 1-2i\sqrt{2} \\
        \end{bmatrix},
   A_{8,4} =\begin{bmatrix}
         -3-i\sqrt{2} & -4-i\sqrt{2} & 3\\
         2i\sqrt{2} & 1+2i\sqrt{2} & -2-i\sqrt{2} \\
         2i\sqrt{2} & 2i\sqrt{2} & -1-i\sqrt{2} \\
        \end{bmatrix},\]
\[A_{9,1} = \begin{bmatrix}
         i\sqrt{2} & 0 & 1 \\
         0 & 1 & 0 \\
         3 & 0 & -i\sqrt{2}\\
        \end{bmatrix},
A_{9,2} = \begin{bmatrix}
         2i\sqrt{2} & 0 & -1 \\
         0 & 1 & 0 \\
         3 & 0 & i\sqrt{2} \\
        \end{bmatrix},
   A_{9,3} =\begin{bmatrix}
         -1 & 0 & 0 \\
         2+i\sqrt{2} & 1 & 0 \\
         3 & 2-i\sqrt{2} & -1 \\
        \end{bmatrix}, \]
\[A_{9,4} = \begin{bmatrix}
         -1+2i\sqrt{2} & 2i\sqrt{2} & -i\sqrt{2} \\
         2+i\sqrt{2} & 3 & -2 \\
         3 & 2-i\sqrt{2} & -1+i\sqrt{2} \\
        \end{bmatrix},
   A_{9,5} =\begin{bmatrix}
         -2 & -2+2i\sqrt{2} & 3-2i\sqrt{2} \\
         2+2i\sqrt{2} & 5 & -6 \\
         3 & 2-2i\sqrt{2} & -2+3i\sqrt{2} \\
        \end{bmatrix},\]
\[A_{9,6} = \begin{bmatrix}
         -2+2i\sqrt{2} & -2-2i\sqrt{2} & -1-2i\sqrt{2} \\
         2+2i\sqrt{2} & -3 & -2 \\
         3 & -2+2i\sqrt{2} & -2+i\sqrt{2} \\
        \end{bmatrix},
A_{9,7} = \begin{bmatrix}
         -3+i\sqrt{2} & 2-2i\sqrt{2} & 2 \\
         4+i\sqrt{2} & -5 & -2-2i\sqrt{2} \\
         3 & -4+i\sqrt{2} & -3-i\sqrt{2} \\
        \end{bmatrix},\]
\[A_{9,8} =\begin{bmatrix}
         -3+2i\sqrt{2} & -4+4i\sqrt{2} & 4-3i\sqrt{2} \\
         4+i\sqrt{2} & 7 & -6 \\
         3 & 4-i\sqrt{2} & -3+i\sqrt{2} \\
        \end{bmatrix},
      A_{9,9} = \begin{bmatrix}
         -3 & -2-i\sqrt{2} & 1+i\sqrt{2} \\
         -2+i\sqrt{2} & -3 & 2 \\
         1+2i\sqrt{2} & 2i\sqrt{2} & -i\sqrt{2} \\
        \end{bmatrix},\]
\[A_{9,10} =\begin{bmatrix}
         -1 & 0 & i\sqrt{2} \\
         i\sqrt{2} & -1 & 2 \\
         1+2i\sqrt{2} & i\sqrt{2} & 3-i\sqrt{2} \\
        \end{bmatrix},
        A_{9,11} = \begin{bmatrix}
         -4 & -2i\sqrt{2} & 1+2i\sqrt{2} \\
         2i\sqrt{2} & -1 & 2 \\
         1+2i\sqrt{2} & -2 & 2-i\sqrt{2} \\
        \end{bmatrix},\]
\[ A_{9,11} = \begin{bmatrix}
         -4 & -2i\sqrt{2} & 1+2i\sqrt{2} \\
         2i\sqrt{2} & -1 & 2 \\
         1+2i\sqrt{2} & -2 & 2-i\sqrt{2} \\
        \end{bmatrix},
        A_{9,12} = \begin{bmatrix}
         2+i\sqrt{2} & -2 & 1-2i\sqrt{2} \\
         2 & -1 & -2i\sqrt{2} \\
         1-2i\sqrt{2} & 2i\sqrt{2} & -4 \\
        \end{bmatrix},\]
\[A_{9,13} =\begin{bmatrix}
         2+2i\sqrt{2} & -2+2i\sqrt{2} & -3 \\
         2-2i\sqrt{2} & 3 & 2+2i\sqrt{2} \\
         1-2i\sqrt{2} & 2 & 2+i\sqrt{2} \\
        \end{bmatrix},
      A_{9,14} = \begin{bmatrix}
         1+3i\sqrt{2} & -2i\sqrt{2} & -4 \\
         2-3i\sqrt{2} & -1+2i\sqrt{2} & 4+2i\sqrt{2} \\
         1-2i\sqrt{2} & i\sqrt{2} & 3+i\sqrt{2} \\
        \end{bmatrix},\]
\[A_{11,1} =\begin{bmatrix}
         -1+i\sqrt{2} & -i\sqrt{2} & 1 \\
         i\sqrt{2} & -1 & 0 \\
         3+i\sqrt{2} & -2 & -i\sqrt{2} \\
        \end{bmatrix},
        A_{11,2} = \begin{bmatrix}
         -2+i\sqrt{2} & 2 & 1 \\
         2i\sqrt{2} & 1-2i\sqrt{2} & 2-i\sqrt{2} \\
         3+i\sqrt{2} & -2-i\sqrt{2} & -1 \\
        \end{bmatrix},\]
\[A_{11,3} = \begin{bmatrix}
         -1 & 0 & 0 \\
         2+i\sqrt{2} & -1 & 0 \\
         3+i\sqrt{2} & -2+i\sqrt{2} & -1 \\
        \end{bmatrix},
   A_{11,4} =\begin{bmatrix}
         -2 & -2+2i\sqrt{2} & 3-i\sqrt{2} \\
         2+2i\sqrt{2} & 5 & -4-i\sqrt{2} \\
         3+i\sqrt{2} & 4-i\sqrt{2} & -3 \\
        \end{bmatrix},\]
\[A_{11,5} = \begin{bmatrix}
         -4 & -2i\sqrt{2} & 1-i\sqrt{2}\\
         4+2i\sqrt{2} & -1+2i\sqrt{2} & -2+i\sqrt{2} \\
         3+i\sqrt{2} & i\sqrt{2} & -1+i\sqrt{2} \\
        \end{bmatrix},
   A_{11,6} =\begin{bmatrix}
         3i\sqrt{2} & 2-2i\sqrt{2} & -1-i\sqrt{2} \\
         2+2i\sqrt{2} & -1-2i\sqrt{2} & -2 \\
         3-i\sqrt{2} & -2 & i\sqrt{2} \\
        \end{bmatrix},\]
\[A_{11,7} = \begin{bmatrix}
         1+3i\sqrt{2} & -4+3i\sqrt{2} & 1-3i\sqrt{2} \\
         2+i\sqrt{2} & -1+2i\sqrt{2} & -2i\sqrt{2} \\
         3-i\sqrt{2} & 4+2i\sqrt{2} & -4 \\
        \end{bmatrix},
A_{11,8} = \begin{bmatrix}
         i\sqrt{2} & 0 & 1 \\
         2 & -1 & -i\sqrt{2} \\
         3-i\sqrt{2} & i\sqrt{2} & -1-i\sqrt{2} \\
        \end{bmatrix},\]
\[A_{11,9} =\begin{bmatrix}
         -2+i\sqrt{2} & -2i\sqrt{2} & 1-i\sqrt{2} \\
         4 & -3+2i\sqrt{2} & -2 \\
         3-i\sqrt{2} & -2+2i\sqrt{2} & -2 \\
        \end{bmatrix},
      A_{11,10} = \begin{bmatrix}
         -1+3i\sqrt{2} & -2-i\sqrt{2} & -3-i\sqrt{2} \\
         4-i\sqrt{2} & -1+2i\sqrt{2} & 2i\sqrt{2} \\
         3-i\sqrt{2} & 2i\sqrt{2} & 2i\sqrt{2} \\
        \end{bmatrix},\]
\[A_{12,1} =\begin{bmatrix}
         -1-i\sqrt{2} & i\sqrt{2} & 1 \\
         -2+2i\sqrt{2} & 3 & -i\sqrt{2} \\
         2+2i\sqrt{2} & 2-2i\sqrt{2} & -1-i\sqrt{2} \\
        \end{bmatrix},
        A_{12,2} = \begin{bmatrix}
         -3 & 2+2i\sqrt{2} & 2+i\sqrt{2} \\
         -2+2i\sqrt{2} & 3 & 2 \\
         2+2\sqrt{2} & 2-2i\sqrt{2} & 1-2i\sqrt{2} \\
        \end{bmatrix},\]
\[A_{12,3} = \begin{bmatrix}
         1+i\sqrt{2} & -i\sqrt{2} & -1 \\
         2 & -1 & i\sqrt{2} \\
         2-2i\sqrt{2} & -2 & 1+i\sqrt{2} \\
        \end{bmatrix},
   A_{12,4} =\begin{bmatrix}
         3+2i\sqrt{2} & -4+i\sqrt{2} & -3 \\
         2 & -1+2i\sqrt{2} & -2+i\sqrt{2} \\
         2-2i\sqrt{2} & 2+2i\sqrt{2} & 1+2i\sqrt{2} \\
        \end{bmatrix},\]
\[A_{16,1} = \begin{bmatrix}
         -1 & -i\sqrt{2} & 1 \\
         2i\sqrt{2} & -3 & -i\sqrt{2} \\
         4 & 2i\sqrt{2} & -1 \\
        \end{bmatrix},
   A_{16,2} =\begin{bmatrix}
         -1+i\sqrt{2} & 2-i\sqrt{2} & -1-2i\sqrt{2} \\
         2i\sqrt{2} & 1-2i\sqrt{2} & -4-i\sqrt{2} \\
         4 & -4-2i\sqrt{2} & -3+3i\sqrt{2} \\
        \end{bmatrix}, \]
\[ A_{16,3} = \begin{bmatrix}
         -1+2i\sqrt{2} & 2 & -2-i\sqrt{2} \\
         2i\sqrt{2} & 1 & -2 \\
         4 & -2i\sqrt{2} & -1+2i\sqrt{2} \\
        \end{bmatrix},
A_{16,4} = \begin{bmatrix}
         -1+3i\sqrt{2} & 4-3i\sqrt{2} & -1-3i\sqrt{2} \\
         2i\sqrt{2} & 1-2i\sqrt{2} & -2-i\sqrt{2} \\
         4 & -4-2i\sqrt{2} & -3+i\sqrt{2} \\
        \end{bmatrix},\]
\[A_{16,5} =\begin{bmatrix}
         -3 & 4+i\sqrt{2} & 3-i\sqrt{2} \\
         4+2i\sqrt{2} & -3-4i\sqrt{2} & -4-i\sqrt{2} \\
         4 & -4-2i\sqrt{2} & -3 \\
        \end{bmatrix},
      A_{16,6} = \begin{bmatrix}
         -3+i\sqrt{2} & 2+i\sqrt{2} & -1-3i\sqrt{2} \\
         4+2i\sqrt{2} & 1-2i\sqrt{2} & -4+3i\sqrt{2} \\
         4 & -2i\sqrt{2} & -1+3i\sqrt{2} \\
        \end{bmatrix},\]
\[A_{16,7} =\begin{bmatrix}
         -3+2i\sqrt{2} & 3i\sqrt{2} & -1-3i\sqrt{2} \\
         4+2i\sqrt{2} & 5 & -4+i\sqrt{2} \\
         4 & 4-2i\sqrt{2} & -3+2i\sqrt{2} \\
        \end{bmatrix},
        A_{16,8} = \begin{bmatrix}
         -3+3i\sqrt{2} & 4+i\sqrt{2} & -1-2i\sqrt{2} \\
         4+2i\sqrt{2} & 1-2i\sqrt{2} & -2+i\sqrt{2} \\
         4 & -2i\sqrt{2} & -1+i\sqrt{2} \\
        \end{bmatrix}\]
     \normalsize
     
\end{section}
\end{appendices}

\newpage

\end{document}